\newcommand*\circled[1]{\tikz[baseline=(char.base)]{\node[shape=circle,draw,inner sep=2pt] (char) {#1};}}
\newtheorem{theorem}{Theorem}
\newtheorem{proposition}[theorem]{Proposition}
\newtheorem{lemma}[theorem]{Lemma}
\newtheorem{corollary}[theorem]{Corollary}
\theoremstyle{definition}
\newtheorem{definition}[theorem]{Definition}
\newtheorem{example}[theorem]{Example}
\newtheorem{remark}[theorem]{Remark}
\newcommand{\oeis}[1]{\href{http://oeis.org/#1}{#1}}
\newcommand{\defin}[1]{\emph{#1}}
\newcommand{\thsup}{\textnormal{th}}
\newcommand{\setP}{\mathbb{N_+}}
\newcommand{\setN}{\mathbb{N}}
\newcommand{\setZ}{\mathbb{Z}}
\newcommand{\setC}{\mathbb{C}}
\newcommand{\xvec}{\mathbf{x}}
\newcommand{\cryse}{\tilde{e}}
\newcommand{\crysf}{\tilde{f}}
\newcommand{\cryss}{\tilde{s}}
\newcommand{\powerSum}{\mathrm{p}}
\newcommand{\elementaryE}{\mathrm{e}}
\newcommand{\monomial}{\mathrm{m}}
\newcommand{\schurS}{\mathrm{s}}
\newcommand{\completeH}{\mathrm{h}}
\newcommand{\macdonaldH}{\mathrm{\tilde H}}
\newcommand{\LLT}{\mathrm{LLT}}
\newcommand{\macdonaldE}{\mathrm{E}}
\newcommand{\hallLittlewoodT}{\mathrm{Q}'}
\newcommand{\nuvec}{{\boldsymbol\nu}}
\newcommand{\symS}{S}
\newcommand{\SSYT}{\mathrm{SSYT}}
\newcommand{\SYT}{\mathrm{SYT}}
\newcommand{\COF}{\mathrm{COF}}
\newcommand{\Des}{\mathrm{Des}}
\newcommand{\NEpath}[4]{
	\fill[white!25]  (#1) rectangle +(#2,#3);
	\fill[fill=white]
	(#1)
	\foreach \dir in {#4}{
		\ifnum\dir=0
		-- ++(1,0)
		\else
		-- ++(0,1)
		\fi
	} |- (#1);
	\draw[help lines] (#1) grid +(#2,#3);
    \draw[dashed] (#1) -- +(#3,#3);
	\coordinate (prev) at (#1);
	\foreach \dir in {#4}{
		\ifnum\dir=0
		\coordinate (dep) at (1,0);
		\else
		\coordinate (dep) at (0,1);
		\fi
		\draw[line width=1.0pt,-stealth] (prev) -- ++(dep) coordinate (prev);
	};
}
\DeclareMathOperator{\stat}{stat}
\DeclareMathOperator{\rw}{rw}
\DeclareMathOperator{\cw}{cw}
\DeclareMathOperator{\rev}{rev}
\DeclareMathOperator{\length}{\ell}
\DeclareMathOperator{\weight}{wt}
\DeclareMathOperator{\inv}{inv}
\DeclareMathOperator{\mininv}{mininv}
\DeclareMathOperator{\maj}{maj}
\DeclareMathOperator{\charge}{charge}
\DeclareMathOperator{\leg}{leg}
\DeclareMathOperator{\ins}{ins}
\DeclareMathOperator{\rec}{rec}
\newcommand{\multiset}[2]{\ensuremath{\left(\kern-.3em\left(\genfrac{}{}{0pt}{}{#1}{#2}\right)\kern-.3em\right)}}
\newcommand{\qbinom}{\genfrac{[}{]}{0pt}{}}
\title{Cyclic sieving, skew Macdonald polynomials and Schur positivity}
\author{Per Alexandersson}
\author{Joakim Uhlin}
\address{Dept. of Mathematics, Royal Institute of Technology, SE-100 44 Stockholm, Sweden}
\email{per.w.alexandersson@gmail.com, joakim\_uhlin@hotmail.com}
\begin{document}

\begin{abstract}
When $\lambda$ is a partition, the specialized non-symmetric Macdonald polynomial
$\macdonaldE_{\lambda}(\xvec;q;0)$ is symmetric and related to a modified Hall--Littlewood polynomial.
We show that whenever all parts of the integer partition $\lambda$ is a multiple of $n$, 
the underlying set of fillings exhibit the cyclic sieving 
phenomenon (CSP) under a cyclic shift of the columns.
The corresponding CSP polynomial is given by $\macdonaldE_{\lambda}(\xvec;q;0)$.
In addition, we prove a refined cyclic sieving phenomenon where the content of the fillings is fixed.
This refinement is closely related to an earlier result by B.~Rhoades.

We also introduce a skew version of $\macdonaldE_{\lambda}(\xvec;q;0)$.
We show that these are symmetric and Schur-positive via a variant of 
the Robinson--Schenstedt--Knuth correspondence and we also describe crystal raising- and lowering operators 
for the underlying fillings.
Moreover, we show that the skew specialized non-symmetric Macdonald polynomials 
are in some cases vertical-strip LLT polynomials.
As a consequence, we get a combinatorial Schur expansion of a new family of LLT polynomials.
\end{abstract}

\maketitle

\setcounter{tocdepth}{1}
\tableofcontents

\section{Introduction}

The cyclic sieving phenomenon (CSP), introduced by V. Reiner, D. Stanton and D. White~\cite{ReinerStantonWhite2004},
is currently an active research topic.
In this article, we provide families of cyclic sieving on tableaux related to 
certain specializations of non-symmetric Macdonald polynomials.
This settles an earlier conjecture by the authors presented in \cite{Uhlin2019}.
The non-symmetric Macdonald polynomials are in our case closely 
related to the transformed Hall--Littlewood functions and Kostka--Foulkes polynomials, 
previously studied in the CSP context by B.~Rhoades~\cite{Rhoades2010b}.
The family of polynomials we study is the specialization of the non-symmetric Macdonald polynomials
$\macdonaldE_\lambda(x_1,\dotsc,x_m;q,t)$ when $\lambda$ is an integer partition and $t=0$.
They can be defined as a weighted sum over certain fillings of the Young diagram $\lambda$.
We denote this set of fillings $\COF(\lambda,m)$, which is defined further down.

\subsection{Main results}

For an integer partition $\lambda$, we let $n\lambda \coloneqq (n\lambda_1,\dotsc,n \lambda_\ell)$.
We show that there is a natural action $\phi$
on the fillings $\COF(n\lambda,m)$ each block of $n$ consecutive columns 
is cyclically rotated one step.
Consequently $\phi$ generates a $C_n$-action on $\COF(n\lambda,m)$.
In \cref{thm:mainCSP}, we prove that for every integer partition $\lambda$, $m \in \setP$
and $n \in \setP$, the triple
\begin{equation}\label{eq:cspIntro}
 \left( \COF(n \lambda, m), \langle \phi \rangle , \macdonaldE_{n\lambda}(1^m,q,0) \right)
\end{equation}
exhibits the cyclic sieving phenomenon. 
Moreover,
as $\lambda$ is held fixed and $n=1,2,3,\dotsc$, this family is a \emph{Lyndon-like family}, 
a notion by P. Alexandersson, S. Linusson and S. Potka~\cite{AlexanderssonLinussonPotka2019} 
(see also \cite{Gorodetsky2019}) meaning that fixed points 
under the group action are in natural bijection
with smaller instances of the combinatorial objects.
When $\lambda = (1)$, this phenomenon reduces to a classical cyclic sieving phenomenon on
words of length $n$ in the alphabet $[m]$, see \cref{ex:CSPonwords} below.
A skew version of \eqref{eq:cspIntro} is given in \cref{thm:mainCSPSkew}.

We also prove a refined cyclic sieving phenomenon.
Let $\COF(n\lambda,\nu)$ denote the set of coinversion-free fillings with shape $n\lambda$ and content $\nu$.
In \cref{thm:refinedMacdonaldCSP}, we show that
\begin{equation}\label{eq:refinedIntro}
 \left( \COF(n \lambda, \nu), \langle \phi \rangle, [\monomial_\nu]\macdonaldE_{n \lambda}(\xvec,q,0) \right)
\end{equation}
exhibits the cyclic sieving phenomenon. When $\lambda = (n)$, we recover the 
cyclic sieving phenomenon on words of length $n$ with content $\nu$,
and 
\[
[\monomial_\nu]\macdonaldE_{(n)}(\xvec,q,0) = \qbinom{n}{\nu}_q,
\]
a $q$-multinomial coefficient.
We remark that if we take $\lambda=(1^k)$, 
$[\monomial_\nu]\macdonaldE_{n\lambda}(\xvec,q,0)$ in~\eqref{eq:refinedIntro} 
can be seen as a $q$-analogue of $n$-tuples of $k$-subsets of $[m]$ with content $\nu$.
We expect that many properties of the usual major index extend to this generalization.

Finally in \cref{sec:skew} we introduce a skew version of $\macdonaldE_{\lambda}(\xvec,q,0)$
and prove that these are symmetric and Schur positive.
We provide an explicit Schur expansion using a generalization of 
charge in \cref{thm:skewEInSchurExpansion}.
As an application, in \cref{thm:lltNewFormula} we obtain a combinatorial 
Schur-expansion of a certain family of \emph{vertical-strip LLT polynomials},
which has not been considered before.
Combining \cref{thm:lltNewFormula} and \cref{thm:skewEInSchurExpansion}, we have the following main result.
\begin{theorem}\label{thm:lltFromCharge}
Let $\lambda/\mu$ be a skew shape with $\ell$ rows,
such that no column contains no more than two boxes. 
Let $\alpha$ be the composition defined as $\alpha_i = \lambda_i-\mu_i$
and let $\nuvec$ be the tuple of skew shapes such that 
$\nu_{j}$ is the vertical strip $1^{\lambda_j} /1^{\mu_j}$.
Then 
\[
 \LLT_{\nuvec}(\xvec;q) = 
 q^{\mininv(\nuvec)} \sum_{\rho \vdash |\lambda'/\mu'|}  \schurS_{\rho'}(\xvec) 
 \sum_{T \in \SSYT(\rho,\alpha)}  q^{\charge_{\mu}(T)}
\]
where $\charge_{\mu}$ is a natural generalization of the charge statistic,
and $\mininv(\nuvec)$ is a simple statistic that only depend on the tuple $\nuvec$.
\end{theorem}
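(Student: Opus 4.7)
The plan is to obtain \cref{thm:lltFromCharge} as an essentially formal corollary of the two preceding results \cref{thm:lltNewFormula} and \cref{thm:skewEInSchurExpansion}, which between them cover the two sides of the desired identity.

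First I would apply \cref{thm:lltNewFormula}, which under the column-size hypothesis (no column contains more than two boxes) identifies the vertical-strip LLT polynomial $\LLT_{\nuvec}(\xvec;q)$, up to an explicit power $q^{\mininv(\nuvec)}$, with the skew specialized non-symmetric Macdonald polynomial $\macdonaldE_{\lambda/\mu}(\xvec;q,0)$ attached to the skew shape $\lambda/\mu$ (with composition $\alpha$ of row lengths). The key point here is that the columns of $\lambda/\mu$, being of length at most two, correspond exactly to the vertical strips $\nu_j = 1^{\lambda_j}/1^{\mu_j}$ making up the tuple $\nuvec$, and one needs to verify that the shift statistic $\mininv(\nuvec)$ matches the normalization built into the skew Macdonald polynomial on the nose — this bookkeeping on the $q$-power is the point that most needs care.

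Next I would feed the resulting skew Macdonald polynomial into \cref{thm:skewEInSchurExpansion}, which supplies the Schur expansion
\[
\macdonaldE_{\lambda/\mu}(\xvec;q,0) = \sum_{\rho} \schurS_{\rho'}(\xvec) \sum_{T \in \SSYT(\rho,\alpha)} q^{\charge_{\mu}(T)},
\]
the inner sum running over semistandard tableaux of shape $\rho$ and content $\alpha$ and weighted by the generalized charge $\charge_\mu$. Substituting this into the relation from the first step and factoring out the common power $q^{\mininv(\nuvec)}$ produces exactly the formula claimed in the theorem.

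The only real step that is not formal is confirming that the two results line up with matching indexing conventions: specifically, that the shape $\rho$ appearing in \cref{thm:skewEInSchurExpansion} for $\macdonaldE_{\lambda/\mu}$ corresponds to $\schurS_{\rho'}$ on the LLT side, that $\rho$ ranges over partitions of $|\lambda'/\mu'|$ (equivalently $|\alpha|$), and that the charge statistic used is indeed the one generalized to the skew case with the parameter $\mu$. Once these bookkeeping issues — especially the transpose and the exponent $\mininv(\nuvec)$ — are checked against the definitions set up in \cref{sec:skew}, the theorem follows by direct substitution with no further combinatorial input.
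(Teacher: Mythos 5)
Your proposal is correct and is exactly the paper's argument: \cref{thm:lltFromCharge} is obtained by directly combining \cref{thm:lltNewFormula} with \cref{thm:skewEInSchurExpansion}. The bookkeeping point you flag is indeed the only thing to check — \cref{thm:lltNewFormula} produces $\macdonaldE_{\lambda'/\mu'}(\xvec;q,0)$ (the \emph{conjugate} shape, not $\macdonaldE_{\lambda/\mu}$ as written in your second display), and applying \cref{thm:skewEInSchurExpansion} to $\lambda'/\mu'$ is precisely what converts $\charge_{\mu'}$ into $\charge_{\mu}$ and $\alpha_i=\lambda'_i-\mu'_i$ into $\alpha_i=\lambda_i-\mu_i$ as required.
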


The paper is structured as follows. 
In \cref{sec:prelim}, we define the cyclic sieving phenomenon and give a brief overview of 
the relevant symmetric functions.
In \cref{sec:csp1}, we give a proof of the CSP in \eqref{eq:cspIntro},
and in \cref{sec:cspRefined} we prove \eqref{eq:refinedIntro}.
In \cref{sec:skew} we introduce the skew specialized Macdonald polynomials
and give the Schur expansion of these. 
We then prove a result in \cref{sec:llt} which implies \cref{thm:lltFromCharge}.
%

We note that some of the results in this paper are based on earlier work done in the 
second author's master's thesis~\cite{Uhlin2019}.

\section{Preliminaries}\label{sec:prelim}

\subsection{Partitions and compositions}
\begin{definition}
Let $n$ and $r$ be natural numbers. A \defin{weak composition} $\lambda$ of $n$
into $r$ \defin{parts} is defined to be an $r$-tuple $\lambda = (\lambda_1 ,\dotsc,\lambda_r)$ 
of non-negative integers such that $\lambda_1+\dotsb+\lambda_r = n$. 
We say that the numbers $\lambda_1,\dotsc, \lambda_r$ are the \defin{parts} of $\lambda$. 
If all parts of $\lambda$ are positive, we say that $\lambda$ is a \defin{composition},
and we write $\lambda \vDash n$.
If $\lambda$ has multiple parts of the same size, we may suppress them using exponents. 
As an example, $(7, 7, 0, 1, 1, 1, 4, 4, 4, 4)$ may be expressed as
$(7^2, 0, 1^3, 4^4)$. 
We write $m_j(\lambda)$ for the number of parts of $\lambda$
equal to $j$ and $n\lambda \coloneqq (n\lambda_1,\dotsc,n\lambda_\ell)$ for $n\in \setN$.
If further $\lambda_1 \geq  \lambda_2 \geq \dotsb \geq \lambda_r$, then $\lambda$ is a \defin{partition} 
of $n$, and denote this by $\lambda \vdash n$. 
The \defin{parts} of $\lambda$ are the positive entries of $\lambda$. 
The \defin{length} of $\lambda$ is the number of parts and is denoted $\ell(\lambda)$. 
We identify partitions that only differ by trailing zeros, 
so $(4,2,2,1,0,0,0)$=$(4,2,2,1,0)$=$(4,2,2,1)$ as partitions. 
There is one unique partition of $0$, namely $\emptyset$ which is referred to as the \defin{empty partition}.
\end{definition}

Note that in some cases, the word \emph{parts} is ambiguous. When $\lambda$ is a weak
composition, a part can be zero whereas when $\lambda$ is a partition, a part must be a
positive integer. 
This conflicting terminology is unfortunately very standard, see e.g.~\cite{StanleyEC2}.

\subsection{Semistandard Young tableau}

\begin{definition}
Let $\lambda=(\lambda_1, \dots, \lambda_r) \vdash n$. 
The \defin{Young diagram} of $\lambda$ is defined as the set $\{(i,j) \in \setZ^2 : 1 \leq i \leq \lambda_j \}$. 
Geometrically, we think of this diagram as a set of $n$ boxes with $r$ 
left-justified rows and $i$ boxes in row $i$, counting from top to bottom, 
starting from row $1$ and column $1$. 
The box in position $(i, j)$ is the box in the $i^\thsup$ row 
and $j^\thsup$ column. 
We use the notation $\lambda$ to both refer to the partition and to the Young diagram described by $\lambda$.
	
Define the \defin{conjugate} of $\lambda$, denoted $\lambda'$, to be
the Young diagram obtained by transposing $\lambda$. 
Geometrically, the conjugate of $\lambda$ is obtained by reflecting $\lambda$ across the line $y = -x$. 
We write $\lambda' = (\lambda_1', \dots, \lambda_l')$. 
If $\lambda$ is a partition on the form $\lambda = (a^b)$, 
then  $\lambda$ is a \defin{rectangular Young diagram}.
\end{definition}
Throughout this article, all the diagrams are displayed in English notation, using matrix coordinates,
with a few exceptions in \cref{sec:llt}.

\begin{figure}[!ht]
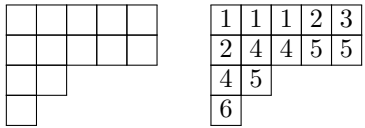

	\ytableausetup{boxsize=1.1em}
	\[\ytableaushort{\;\;\;\;\;, \;\;\;\;\;,\;\;,\;} \qquad \ytableaushort{11123,24455, 45, 6}\]
	\caption{To the left: A Young diagram of shape $\lambda=(5,5,2,1)$. 
	To the right: A semistandard Young tableau of shape $\lambda$.}
	\label{fig: SSYT example}
\end{figure}

\begin{definition}\label{def:SSYT}
Let $\lambda$ be a Young diagram. A \defin{filling} of $\lambda$ is a map $\lambda \to \setP$ and a \defin{semistandard Young tableau} is a 
filling of $\lambda$ with positive integers such that in each row the entries 
are weakly increasing and in each row the entries are strictly increasing. 
The set of all semistandard Young tableaux of shape $\lambda$ is denoted $\SSYT(\lambda)$.

Let $T$ be a semistandard Young tableau. Define the \defin{reading word} of $T$, 
denoted $\rw (T)$, as the word obtained by reading the entries $T$ from the 
bottom row to the top row and in each row from left to right. 
For example, the semistandard Young tableau 
in \cref{fig: SSYT example} has reading word $\rw(T)=6452445511123$. 
We let $\xvec^T \coloneqq \prod_j x_j^{m_j(T)}$ where $m_j(T)$ is the number of entries 
in $T$ equal to $j$.
The semistandard Young tableau $T$ in \cref{fig: SSYT example} gives $\xvec^T=x_1^3x_2^2x_3x_4^3x_5^2x_6$.
\end{definition}
There are several equivalent ways to define the Schur functions 
but the following is the most useful for our purposes.
We let the \defin{Schur function} indexed by the integer partition $\lambda$ be defined as
\[
\schurS_\lambda(\xvec) \coloneqq \sum_{T \in \SSYT(\lambda)} \xvec^T.
\]

\begin{example}
Let $\lambda=(7,6,3,3,3,1)$ be a partition. 
Then $|\lambda|=23$, the conjugate partition, $\lambda'$, is given by $(6,5,5,2,2,2,1)$ 
and $3\lambda=(21,18,9,9,9,3)$. 
Furthermore, $m_1(\lambda)=1$, $m_2(\lambda)=0$ and $m_3(\lambda)=3$.
\end{example}

\subsection{Burge words and RSK}\label{sec:burgeRSK}

The Robinson--Schenstedt--Knuth correspondence (RSK) is a famous combinatorial bijection with 
many different applications \cite{StanleyEC2,Krattenthaler2006}. 
The version we use in this paper is a bijection between pairs of certain biwords and 
pairs of semistandard Young tableaux. 
We note that the biwords we consider are not lexigraphically ordered, which is otherwise typical.

\begin{definition}\label{def:burge word}
	A \defin{Burge word} is a two-line array with positive integers
	\[W=\begin{pmatrix}
	i_1 & i_2 & \dotsb & i_m\\
	j_1 & j_2 & \dotsb & j_m
	\end{pmatrix} \]
	sorted primarily increasingly in the first row and secondarily on the second row decreasingly. 
	Furthermore, all columns are unique. As an example, 
	$\left(\begin{smallmatrix} 1 & 1 & 2 & 3 & 3 & 3 & 3 & 5 & 6 & 6 & 6 \\ 
	3 & 1 & 2 & 6 & 4 & 3 & 2 & 4 & 5 & 3 & 1 \end{smallmatrix}\right)$ is a Burge word.
	A pair $(i_c, j_c)$ is called a \defin{biletter}. The first row of $W$ is called the \defin{recording word}
	and the second row of the biword is called the \defin{charge word} --- the 
	reason for this terminology will be apparent in \cref{prop:majAspostfixCharge}.
\end{definition}
We use the same row insertion bumping algorithm as the standard RSK on biwords, which we assume the readers are familiar with. 
Properties of our version of RSK is the third variant described by C.~Krattenthaler~\cite{Krattenthaler2006}.


\ytableausetup{boxsize=0.9em}
\begin{table}[ht!]
	\begin{tabular}{l|l|l|l|l|l|l|l|l}
	\text{Inserted biletter}	\rule{0pt}{4ex} & $\binom{1}{4}$ & $\binom{1}{1}$ & $\binom{2}{3}$ & $\binom{2}{2}$ & $\binom{4}{5}$ & $\binom{5}{4}$ & $\binom{5}{3}$ & $\binom{5}{1}$
	\\ \hline
	P & \ytableaushort{4} & \ytableaushort{1,4} & \ytableaushort{13,4} & \ytableaushort{12,3,4} & \ytableaushort{125,3,4} & \ytableaushort{124,35,4} & \ytableaushort{123,34,45} & \ytableaushort{\none, 113,24,35,4,\none}
	\\ \hline
	Q & \ytableaushort{1} & \ytableaushort{1,1} & \ytableaushort{12,1} & \ytableaushort{12,1,2} & \ytableaushort{124,1,2} & \ytableaushort{124,15,2} & \ytableaushort{124,15,25} & \ytableaushort{\none, 124,15,25,5,\none}
	\end{tabular}

\caption{Computing the image of a Burge word 
under RSK via a sequence of row insertions.}
\label{table: RSK}
\end{table}

\begin{proposition}\label{prop:RSK}
	The RSK-algorithm yields a bijection between Burge words and pairs of fillings $(P,Q)$ of the same shape
	such that the \defin{insertion tableau} $P$ is semistandard and the \defin{recording tableau} $Q$ has the property that $Q^t$ is semistandard.
\end{proposition}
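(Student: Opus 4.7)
The plan is to proceed by induction on the number of biletters $m$, maintaining the invariants that after inserting the first $c$ biletters the insertion tableau $P_c$ is semistandard and the recording tableau $Q_c$ has strictly increasing rows together with weakly increasing columns (which is equivalent to $Q_c^t$ being semistandard). Preservation of the $P$-side invariant under row insertion is classical, so the analysis reduces to understanding the new cell on the $Q$-side. The Burge order gives $i_1\le i_2\le\dotsb\le i_m$, so at step $c+1$ the value $i_{c+1}$ is at least as large as every entry already present in $Q_c$; consequently the new cell can be placed at any outer corner without violating the strict-row or weak-column conditions, unless $Q_c$ already contains an entry equal to $i_{c+1}$ in the same row as the new cell.

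That exceptional situation is ruled out by examining maximal blocks of biletters sharing a common first coordinate $i$. Within such a block, the Burge order dictates that the second coordinates strictly decrease, so Fulton's row-bumping lemma applied to two consecutive insertions in $P$, in which the second inserted letter is strictly smaller than the first, places each new cell strictly below and weakly to the left of the previous one. Iterating, all cells of $Q$ carrying the common value $i$ occupy distinct rows and form a staircase descending to the lower-left; in particular the row of the new cell contains only values strictly smaller than $i$, preserving the strict-row condition, while its column may already contain entries equal to $i$, which is consistent with weak columns. Uniqueness of columns in the Burge word follows from the same strict descent.

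For the inverse, given $(P,Q)$ with $P$ semistandard and $Q^t$ semistandard, I would locate the largest entry $i$ of $Q$ and, among the cells carrying that value, select the one in the bottom-most row; the strict-row condition guarantees that this cell is unique. Performing reverse row-bumping in $P$ from that cell yields a value $j$; record the biletter $(i,j)$ as the most recent insertion and iterate. The sequence of recovered $i$-values is weakly decreasing by construction, and within a run of equal $i$-values the forward analysis identifies the bottom-most cell as the one added most recently, so the recovered $j$-values strictly increase across such a run. Reversing the output therefore produces a legitimate Burge word with all columns distinct. The main obstacle is the geometric fact about successive bumping paths under strictly decreasing insertions and its translation into the staircase structure of same-valued cells in $Q$; once this is in hand, bijectivity reduces to the standard reversibility of a single row-insertion given the cell where the new box was added.
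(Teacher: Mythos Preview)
The paper does not actually supply a proof of this proposition: it is stated as a known fact, with a pointer to the third variant of RSK in Krattenthaler's survey~\cite{Krattenthaler2006}. So there is no ``paper's own proof'' to compare against; your proposal is a self-contained argument where the paper simply cites the literature.

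Your argument is the standard one for this variant and is essentially correct. The forward direction is handled properly: within a block of biletters sharing a common first coordinate $i$, the second coordinates strictly decrease, and Fulton's row-bumping lemma (the case of inserting a strictly smaller element after a larger one) forces each new box strictly below and weakly left of the previous one. Hence the boxes of $Q$ labelled $i$ lie in pairwise distinct rows, which is exactly what is needed for $Q^t$ to be semistandard. The inverse is also correctly described: pick the maximal label $i$ in $Q$, and among its occurrences take the one in the lowest row (unique since rows of $Q$ are strictly increasing), then reverse-bump.

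One point you could make more explicit is the surjectivity half of the bijection: having defined the inverse procedure on an arbitrary pair $(P,Q)$ with $P$ semistandard and $Q^t$ semistandard, you should check that applying the forward map to the resulting word really returns $(P,Q)$. This amounts to verifying that at each step the cell you chose to reverse-bump from is indeed an outer corner of the common shape (it is, because it is the unique $i$-cell in its row and $i$ is maximal, so nothing sits to its right or below it), and that after one reverse step the remaining $(P',Q')$ again satisfies the hypotheses. Both are routine, but worth a sentence so that the induction closes cleanly.
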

As an example of \cref{prop:RSK}, the procedure in \cref{table: RSK} shows that we have the following correspondence.
\begin{align*}
\begin{pmatrix}
1 & 1 & 2 & 2 & 4 & 5 & 5 & 5\\
4 & 1 & 3 & 2 & 5 & 4 & 3 & 1
\end{pmatrix} 
\xlongrightarrow{RSK}
\left(
\ytableaushort{113,24,35,4}\; ,\quad
\ytableaushort{124,15,25,5}
\right).
\end{align*}

\subsection{\texorpdfstring{$q$}{q}-analogues}

A $q$-analogue of a certain expression is a rational function in the variable $q$ 
from which we can obtain the original expression by letting $q \to 1$.
\begin{definition}
Let $n \in \setN$. Define the \defin{$q$-analogue of $n$} as
$[n]_q \coloneqq 1+q+\dotsb+q^{n-1}$.
Furthermore, define the \defin{$q$-factorial of $n$} as
$
 [n]_q! \coloneqq [n]_q[n-1]_q \dotsm [1]_q
$.
Lastly, the \defin{$q$-binomial coefficient} are defined as  
\[
	\qbinom{n}{k}_q \coloneqq 
	\dfrac{[n]_q!}{[n-k]_q![k]_q!}  \text{ if } n \geq k\geq 0, \text{ and $0$ otherwise.} 
\]
\end{definition}
\begin{theorem}[$q$-Lucas theorem, see e.g \cite{Sagan1992}]\label{eq:q-Lucas}
	Let $n, k\in \setN$. Let $n_1, n_0, k_1, k_0$ be the unique 
	natural numbers satisfying $0 \leq n_0, k_0 \leq d-1$ and $n=n_1d+n_0$, $k=k_1d+k_0$. Then 
	\[
	\qbinom{n}{k}_q \equiv \binom{n_1}{k_1}\qbinom{n_0}{k_0}_q \pmod{\Phi_d(q)}
	\]
	where $\Phi_d(q)$ is the $d^\thsup$ cyclotomic polynomial. In particular, we have
	\begin{equation}
	\qbinom{n}{k}_\xi = \binom{n_1}{k_1}\qbinom{n_0}{k_0}_\xi.
	\end{equation}
	if $\xi$ is a primitive $d^\thsup$ root
\end{theorem}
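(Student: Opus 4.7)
The plan is to derive the congruence from the $q$-binomial theorem
\[
\prod_{i=0}^{n-1}(1+q^i x) = \sum_{k=0}^{n} q^{\binom{k}{2}} \qbinom{n}{k}_q x^k
\]
by reducing the left-hand side modulo $\Phi_d(q)$ and then comparing coefficients of $x^k$ on both sides. This generating-function approach avoids repeated induction via Pascal-type recurrences.

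First I would split the product according to the Euclidean decomposition $n = n_1 d + n_0$ as
\[
\prod_{j=0}^{n_1-1}\prod_{i=0}^{d-1}\bigl(1+q^{jd+i}x\bigr) \; \cdot \prod_{i=0}^{n_0-1}\bigl(1+q^{n_1 d+i}x\bigr).
\]
Since $\Phi_d(q) \mid q^d - 1$, we have $q^d \equiv 1$ and hence $q^{jd+i} \equiv q^i$, so this collapses modulo $\Phi_d(q)$ to $\bigl(\prod_{i=0}^{d-1}(1+q^i x)\bigr)^{n_1}\cdot \prod_{i=0}^{n_0-1}(1+q^i x)$. Applying the $q$-binomial theorem with $n=d$ to the inner factor, the coefficient of $x^k$ for $0<k<d$ is $q^{\binom{k}{2}}\qbinom{d}{k}_q$, and $\qbinom{d}{k}_q$ carries the factor $1 - q^d$ in its numerator, so it vanishes modulo $\Phi_d(q)$. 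Hence $\prod_{i=0}^{d-1}(1+q^i x) \equiv 1 + q^{\binom{d}{2}} x^d$, and one checks that $q^{\binom{d}{2}} \equiv (-1)^{d-1} \pmod{\Phi_d(q)}$: when $d$ is odd, $\binom{d}{2}$ is a multiple of $d$; when $d$ is even, $q^{d/2}$ squares to $1$ but cannot equal $1$ (the multiplicative order of $q$ modulo $\Phi_d$ is exactly $d$), so $q^{d/2} \equiv -1$.

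Raising to the $n_1^{\thsup}$ power and multiplying by $\sum_{k_0} q^{\binom{k_0}{2}} \qbinom{n_0}{k_0}_q x^{k_0}$, then extracting the coefficient of $x^{k_1 d + k_0}$ on both sides of the $q$-binomial theorem, yields
\[
q^{\binom{k}{2}} \qbinom{n}{k}_q \equiv (-1)^{k_1(d-1)} \binom{n_1}{k_1}\, q^{\binom{k_0}{2}} \qbinom{n_0}{k_0}_q \pmod{\Phi_d(q)}.
\]
To extract the stated formula I must verify that the leftover factor $q^{\binom{k}{2}-\binom{k_0}{2}} = q^{k_1 d(k_1 d + 2k_0 - 1)/2}$, together with the sign $(-1)^{k_1(d-1)}$, reduces to $1$ modulo $\Phi_d(q)$. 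This parity bookkeeping is the main obstacle: if $d$ is odd, then $k_1 d(k_1 d + 2k_0 - 1)/2$ is a multiple of $d$ regardless of the parity of $k_1$ (split cases on whether $k_1$ is even or odd to absorb the factor of $2$), and the sign is trivial since $d-1$ is even; if $d$ is even, writing $d = 2e$ one finds the exponent is congruent to $k_1 e \pmod{d}$, contributing the factor $q^{k_1 e} \equiv (-1)^{k_1}$, which exactly cancels $(-1)^{k_1(d-1)} = (-1)^{k_1}$. Dividing through gives the congruence. The ``in particular'' assertion is immediate: if $\xi$ is a primitive $d^{\thsup}$ root of unity, then $\Phi_d(\xi) = 0$, so evaluating both sides at $q=\xi$ upgrades the congruence to an equality of complex numbers.
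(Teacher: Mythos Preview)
The paper does not supply a proof of this theorem; it is stated with a citation to Sagan and used later without justification. So there is no ``paper's own proof'' to compare against.

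Your argument is correct and is essentially the standard proof in the cited reference: reduce the generating function $\prod_{i=0}^{n-1}(1+q^i x)$ modulo $\Phi_d(q)$, collapse each block of $d$ factors to $1+(-1)^{d-1}x^d$, and read off coefficients. The parity bookkeeping at the end is the only place one can slip, and you handle it cleanly; in particular the cancellation of $q^{\binom{k}{2}}$ is legitimate because $q$ is a unit modulo $\Phi_d(q)$. One minor phrasing point: when you say ``$\qbinom{d}{k}_q$ carries the factor $1-q^d$ in its numerator,'' it is worth noting explicitly that you are using the product form $\qbinom{d}{k}_q=\prod_{j=1}^{k}\frac{1-q^{d-j+1}}{1-q^{j}}$ and that $\Phi_d(q)$ divides none of the denominator factors $1-q^j$ for $1\le j<d$, so the divisibility survives the quotient.
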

\cref{eq:q-Lucas} will be used in later sections.

\subsection{Cyclic sieving}\label{ssec:csp}

\begin{definition}[Cyclic sieving, see \cite{ReinerStantonWhite2004}]
Let $X$ be a set of combinatorial objects and $C_n$ be the cyclic group of order $n$
acting on $X$. Let $f(q)\in \setN[q]$ be a polynomial with non-negative integer coefficients.
We say that the triple $(X,C_n,f(q))$ \defin{exhibits the cyclic sieving phenomenon, (CSP)}
if for all $d \in \setZ$,
\begin{align}\label{eq:cspDef}
 \#\{ x\in X : g^d \cdot x = x \} = f(\xi^d)
\end{align}
where $\xi$ is a primitive $n^\thsup$ root of unity.
\end{definition}
Note that it follows immediately from the definition that $\# X = f(1)$. 
	It is almost always tacitly assumed that the group action of $C_n$ on $X$ and the 
	polynomial $f(q)$ should be natural in some sense. 
	The group action could be some form of rotation or cyclic shift of 
	the elements of $X$. The polynomial usually has a closed form and is 
	also typically the generating polynomial for some combinatorial statistic defined on $X$. 

In \cref{table: instances of CSP} we summarize some of the most famous and relevant instances of cyclic sieving. 
For a more comprehensive list, see B. Sagan's article~\cite{Sagan2011}.

\begin{table}[!ht]
	\centering
	\begin{tabular}{llll}
		\toprule
		Set & Group action & Polynomial &  Reference \\ 
		\midrule 
		$k$-subsets of $[n]$ & Nearly free action & $\qbinom{n}{k}_q$ & \cite{ReinerStantonWhite2004}\\
		Words with content $\alpha$ & Cyclic shift & $\qbinom{|\alpha|}{\alpha_1, \dots, \alpha_\ell}_q$ & \cite{ReinerStantonWhite2004}\\
		Non-cross. perf. matchings & Rotation & $\frac{1}{[n+1]_q}\qbinom{2n}{n}_q$ & \cite{PetersenPylyavskyyRhoades2008}\\
		$\SYT(n^m)$ & Promotion &  $f^\lambda(q)$ & \cite{Rhoades2010} \\
		$01$-matrices & Shift rows/columns & See \cref{thm:rhoadesMatrices} & \cite{Rhoades2010b}\\
		\bottomrule
	\end{tabular}
	\caption{
		A few known instances of cyclic sieving.
	}	\label{table: instances of CSP}
\end{table}
One of the main results of this paper, \cref{thm:mainCSP}, is a generalization of the first instance of cyclic sieving
in \cref{table: instances of CSP} and it is also closely related to the last instance in the table. 

\begin{example}[$k$-subset CSP, see \cite{ReinerStantonWhite2004}]\label{ex:CSPonSubsets}
	Let $\binom{[n]}{k}$ be the set of $k$-subsets of $[n]$.
	Suppose that $C_N$ is generated by a permutation $\sigma \in \symS_n$, 
	where the cycles of $\sigma$ consists of $N$-cycles and one or zero singletons. 
	Let $C_N$ act on $[n]$ in the natural way (this is referred to as $C_N$ acting \defin{nearly freely} on $[n]$). 
	Then $\left(\binom{[n]}{k}, C_N, \qbinom{n}{k}_q \right)$ exhibits the cyclic sieving phenomenon.
\end{example}

The situation is even more interesting when different instances of the 
cyclic sieving phenomenon are related in a certain fashion.

\begin{definition}[Lyndon-like CSP, \cite{AlexanderssonLinussonPotka2019}]\label{def:LyndonLike}
Let $\{X_n\}_{n=1}^\infty$ be a family of combinatorial objects
with a cyclic group action $C_n$ acting on $X_n$.
Furthermore, let $\{f_n(q)\}_{n=1}^\infty$ be a sequence of polynomials in $\setN[q]$,
such that for each $n=1,2,\dotsc,$ the triple $(X_n,C_n,f_n(q))$ exhibits the cyclic sieving phenomenon.
We say that the family of triples $\{(X_n,C_n,f_n(q))\}_{n=1}^\infty$ 
is \defin{Lyndon-like} if $f_{n/d}(1) = f_n(e^{\frac{2\pi i}{d}})$ 
for all positive integers $d$, $n$ such that $d|n$.
\end{definition}
Phrased in a different manner, the family is Lyndon-like if and only if 
the number of elements in $X_n$ fixed by $g^d$ is in bijection with $X_d$
where $g$ is an element of order $n$ in $C_n$. 
We note that the notion of Lyndon-like is also 
studied from a different perspective (called $q$-Gauß congruences) in \cite{Gorodetsky2019}.

\begin{example}\label{ex:CSPonwords}
Let $W_{nk}$ be the set of words of length $n$ in the alphabet $[k]$. 
Let $C_n$ act on $W_{nk}$ by cyclic rotation. 
Take $f_n(q)=\sum_{w \in W_{nk}} q^{\maj(w)}$. 
Then $(W_{nk}, C_n, f_n(q))$ exhibits the cyclic sieving phenomenon. 
Furthermore, if we fix $k$, this family of CSP-triples is Lyndon-like.
\end{example}

One can show that the group action on a Lyndon-like family $X_n$ corresponds to 
rotation on some set of words of length $n$. 
When $X_n$ is the set of binary words of length $n$,
the orbits of length $n$ are in bijection with \emph{Lyndon words}, see \oeis{A001037} in \cite{OEIS}.
Each Lyndon-like family of combinatorial objects then has an analogue of Lyndon words.

\subsection{Symmetric functions and plethysm}

We use standard notation (see e.g. \cite{Macdonald1995,StanleyEC2})
for symmetric functions. We have the 
elementary symmetric functions $\elementaryE_\lambda$,
complete homogeneous symmetric functions $\completeH_\lambda$,
the power-sum symmetric functions $\powerSum_\lambda$
and the Schur functions $\schurS_\lambda$.
Recall also the standard involution on symmetric functions $\omega$,
with the defining properties that for $\lambda \vdash n$,
\[
 \omega(\completeH_\lambda) = \elementaryE_\lambda,\qquad 
 \omega(\schurS_\lambda) = \schurS_{\lambda'}, \qquad 
 \omega(\powerSum_\lambda) = (-1)^{n-\length(\lambda) }\powerSum_\lambda.
\]
We shall also require a few identities 
related to \defin{plethysm} --- for a comprehensive background on plethysm and 
the notation used, see J. Haglund's book~\cite{qtCatalanBook}. 
In this paper, we only need the following few properties.
When $f$ is a symmetric function, we let 
the \defin{plethystic substitution} $\powerSum_k[f]$ for $k \in \setN$ be defined as
\begin{align}
 \powerSum_k[f] \coloneqq f(x_1^k,x_2^k,x_3^k,\dotsc).
\end{align}
Note that in particular, $\powerSum_k[\powerSum_m] = \powerSum_{km}$.
It is clear from the definition that for symmetric functions $f$ and $g$,
\[
\powerSum_k[f+g] = \powerSum_k[f] + \powerSum_k[g] \text{ and }
\powerSum_k[f\cdot g] = \powerSum_k[f] \cdot \powerSum_k[g].
\]
\begin{lemma}\label{lem:plethOmega}
 For any homogeneous symmetric function $f$ of degree $n$, we have that
 \[
  \powerSum_k[ \omega f ] = (-1)^{(k+1)n} \omega(\powerSum_{k}[f]).
 \]
\end{lemma}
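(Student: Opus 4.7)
The plan is to reduce both sides to the power-sum basis, where both $\omega$ and plethysm by $\powerSum_k$ act diagonally, and then to simply compare the signs. Since $\{\powerSum_\lambda\}_{\lambda \vdash n}$ forms a basis of the homogeneous symmetric functions of degree $n$ (over $\setQ$), and both sides of the claimed identity are $\setQ$-linear in $f$, it suffices to verify the identity when $f = \powerSum_\lambda$ for an arbitrary partition $\lambda \vdash n$.

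Concretely, first I would compute the left-hand side: using $\omega(\powerSum_\lambda) = (-1)^{n-\length(\lambda)} \powerSum_\lambda$ and the multiplicativity of $\powerSum_k[\cdot]$ (stated in the excerpt), together with $\powerSum_k[\powerSum_m] = \powerSum_{km}$, I obtain
\[
\powerSum_k[\omega \powerSum_\lambda] = (-1)^{n-\length(\lambda)} \powerSum_{k\lambda}.
\]
Next I would compute the right-hand side in the same manner: $\powerSum_k[\powerSum_\lambda] = \powerSum_{k\lambda}$, and since $k\lambda$ is a partition of $kn$ with $\length(k\lambda) = \length(\lambda)$, applying $\omega$ yields
\[
\omega(\powerSum_k[\powerSum_\lambda]) = (-1)^{kn-\length(\lambda)} \powerSum_{k\lambda}.
\]
Multiplying by $(-1)^{(k+1)n}$ gives $(-1)^{(k+1)n + kn - \length(\lambda)} \powerSum_{k\lambda} = (-1)^{(2k+1)n - \length(\lambda)} \powerSum_{k\lambda} = (-1)^{n-\length(\lambda)} \powerSum_{k\lambda}$, matching the left-hand side.

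The argument is essentially a one-line sign bookkeeping once the right basis is chosen, so there is no real obstacle; the only thing that has to be kept straight is that the parity of the sign $(-1)^{kn-\length(\lambda)}$ on the right depends on both $k$ and $n$, and that the correction factor $(-1)^{(k+1)n}$ in the statement is precisely what is needed to cancel the discrepancy between $(-1)^{n-\length(\lambda)}$ and $(-1)^{kn-\length(\lambda)}$. Finally, I would note explicitly that the reduction to the power-sum basis is legitimate because $\powerSum_k[\cdot]$ is $\setQ$-linear, which follows from its being an algebra homomorphism as recalled just above the statement of the lemma.
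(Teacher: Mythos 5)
Your proof is correct and follows essentially the same route as the paper: reduce to $f = \powerSum_\lambda$ by linearity, use $\omega(\powerSum_\lambda) = (-1)^{n-\length(\lambda)}\powerSum_\lambda$ and $\powerSum_k[\powerSum_\lambda] = \powerSum_{k\lambda}$, and check the signs. The only cosmetic difference is that you compute both sides separately and compare, whereas the paper rewrites the left-hand side into the right-hand side in one chain.
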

\begin{proof}
Since plethysm is linear, it suffices to prove the identity for $f = \powerSum_\lambda$,
where $\lambda \vdash n$.
We have that $\powerSum_k[ \omega \powerSum_\lambda ] $ is equal to
$\powerSum_k[ (-1)^{n-\length(\lambda)} \powerSum_\lambda ]$ 
$= (-1)^{n-\length(\lambda)} \powerSum_{k\lambda}$
$= (-1)^{(n-\length(\lambda)) + (kn-\length(\lambda))} \omega(\powerSum_{k\lambda})$
which can be simplified to $(-1)^{(k+1)n} \omega(\powerSum_{k}[\powerSum_\lambda])$.
\end{proof}

\subsection{Hall--Littlewood and non-symmetric Macdonald polynomials}

The family of non-symmetric Macdonald polynomials, $\{\macdonaldE_\alpha(\xvec;q,t)\}_\alpha$
where $\alpha \in \setN^n$ is a basis for $\setC[x_1,\dotsc,x_n]$.
These were introduced by E. Opdam~\cite{Opdam1995,Macdonald1995},
and further developed by I. Chrednik~\cite{Cherednik1995nonsymmetric}.
The first definition of non-symmetric Macdonald polynomials 
is quite cumbersome and indirect. 
J. Haglund, M. Haiman and N. Loehr~\cite{HaglundHaimanLoehr2008} found 
a combinatorial formula for computing $\macdonaldE_\alpha(\xvec;q,t)$,
using the notion of \emph{non-attacking fillings},
thus generalizing F. Knop and S. Sahi's earlier formula for Jack polynomials~\cite{KnopSahi1997}.
In this paper, we shall only study a special case of the non-symmetric Macdonald polynomials,
namely the case when $\lambda$ is a partition and $t=0$.
Here, we use the same notation as P. Alexandersson and M. Sawney~\cite{AlexanderssonSawhney2017,AlexanderssonSawhney2019},
which differs slightly from Haglund et al.~\cite{HaglundHaimanLoehr2008}.
The notation $\macdonaldE_\alpha(\xvec;q,t)$ in this paper is equal to 
$E_{\rev(\alpha)}(\xvec;q,t)$ in theirs where the composition has been reversed.
Since we shall only study the specialization $\macdonaldE_\lambda(\xvec;q,0)$, so we do not 
introduce the non-symmetric Macdonald polynomials in full generality.

Let $\lambda=(\lambda_1, \dots, \lambda_r)$ be a 
Young diagram, $m\in \setN$ with $m\geq r$. Let $F : \lambda \to [m]$ be a filling of $\lambda$. 
Three boxes $a$, $b$ $c$ in $F$ form a \defin{triple} if $a$ is just to 
the left of $b$ and $c$ is somewhere under $b$. 
The entries in a triple form an \defin{inversion-triple} if 
they are ordered increasingly in a counter-clockwise orientation. 
If two entries in the triple are equal, then the entry with the 
largest subscript in \eqref{eq:invTriples} is considered to be the biggest.
\begin{equation}\label{eq:invTriples}
\ytableausetup{boxsize=1.1em}
\begin{ytableau}
a_3 & b_1 \\
\none[\circlearrowleft
]  & \none[\scriptstyle\vdots] \\
\none & c_2 \\
\end{ytableau}
\end{equation}
A filling of shape $\lambda$ is called a \defin{coinversion-free filling}
if every triple is an inversion-triple and the first column is 
strictly decreasing from top to bottom.
The set of such fillings where the entries are in $[m]$ is denoted $\COF(\lambda,m)$.
Note that the conditions imply that every column in a coinversion-free filling 
must have distinct entries.
\begin{remark}
The definition of coinversion-free filling is essentially the same as used 
by P.~Alexandersson and M.~Sawhney~\cite{AlexanderssonSawhney2017} and by J.~Uhlin~\cite{Uhlin2019} 
with the exception that the aforementioned texts also include \emph{basements}. 
However, it is easy to see that these different definitions both yield $\macdonaldE_\lambda(\xvec;q,0)$. 
Arguably, our definition makes the results in this article more natural. 
S.~Assaf~\cite{Assaf2018Kostka} and S.~Assaf, N.~Gonz\'ales~\cite{AssafGonzalez2018} 
study a generalized form of coinversion-free fillings, 
which also allows composition-shaped fillings. 
Therein, they are called \defin{semistandard key tabloids}.
\end{remark}
A \defin{descent}\footnote{Note that this seems non-standard compared to descents in words. 
This terminology is due to the usage of \emph{skyline} diagrams used
when describing the non-symmetric Macdonald polynomials~\cite{HaglundHaimanLoehr2008}. 
We use English notation rather than skyline diagrams.
} of a filling $F$ is a box $(i,j)$ such that $F(i,j-1)<F(i,j)$.
In particular, there are no descents in the first column.
The set of descents is denoted $\Des(F)$. The \defin{leg} of a box $b$ is the number 
of boxes that lie strictly to the right of $b$ in the diagram. 
In other words, if $b=(i,j)$, then $\leg(b)=\lambda_i-j$. 
The \defin{major index} of $F$ is defined as
\[
 \maj(F) = \sum_{b \in \Des(F)} (\leg(b)+1)).
\]
Given a filling $F$ of shape $\lambda$, we let the \defin{weight} of $F$, $\weight(F) = (w_1,\dotsc,w_m)$,
be the vector such that $w_i$ count the number of occurrences of $i$ in $F$.
Furthermore, we let $\xvec^F$ denote the monomial $\prod_{(i,j)\in \lambda} x_{F(i,j)}$,
see \cref{fig:descents}.
\begin{figure}[!ht]
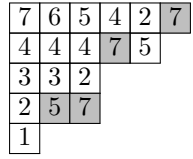

\ytableausetup{boxsize=1.1em}
	\[
	\ytableaushort{76542{*(lightgray)7},444{*(lightgray)7}5,332,2{*(lightgray)5}{*(lightgray)7},1}
	\]
	\caption{A coinversion-free filling with descents marked and weight $(1,3,2,4,3,1,4)$.
	Its major index is $1+2+2+1 = 6$.
	}\label{fig:descents}
\end{figure}

The (specialized) \defin{non-symmetric Macdonald polynomial} $\macdonaldE_\lambda(\xvec;q,0)$
is then defined as 
\begin{equation}\label{eq: combinatorialFormula}
 \macdonaldE_\lambda(x_1,\dotsc,x_m;q,0) = \sum_{F \in \COF(\lambda,m)} q^{\maj(F)} \xvec^F.
\end{equation}
One can verify that this definition agrees with the one 
given in \cite{Alexandersson2015gbMacdonald}.
Despite the name, the specialization $\macdonaldE_\lambda(\xvec;q,0)$
is in fact a symmetric polynomial\footnote{There is an extension of the 
notion of inversion triples to diagrams indexed by weak compositions and 
then one has that $\macdonaldE_\lambda(\xvec;q,0)$ 
is independent of the order of entries in $\lambda$,
see \cite[Prop. 17]{Alexandersson2015gbMacdonald}.}
as we shall see below.

\begin{example}
As can be computed by summing all monomials in \cref{fig:SpecializedFillings}, 
we have that $\macdonaldE_{(2,1)}(x_1,x_2,x_3;q,0)$ is given by
\begin{align*}
	& x_1^2x_2+x_1x_2^2+x_1^2x_3+x_1x_3^2+x_2^2x_3+x_2x_3^2+2x_1x_2x_3+qx_1x_2x_3 \\
	&= (2+q)\monomial_{111}(x_1,x_2,x_3) + \monomial_{21}(x_1,x_2,x_3).
\end{align*}
\begin{figure}[!ht]
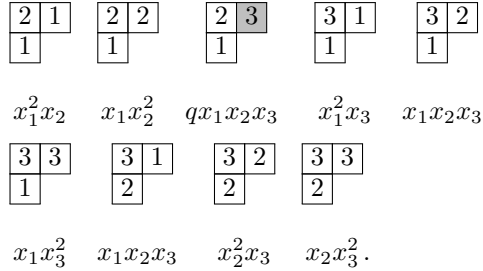

	\begin{align*}
	&\overset{\ytableaushort{21,1,\none}}{ x_1^2x_2 } \quad
	\overset{\ytableaushort{22,1,\none}}{ x_1x_2^2} \quad
	\overset{\ytableaushort{2{*(lightgray)3},1,\none}}{ qx_1x_2x_3 \phantom{\!\!\!\!x^2}} \quad
	\overset{\ytableaushort{31,1,\none}}{ x_1^2x_3} \quad
	\overset{\ytableaushort{32,1,\none}}{ x_1x_2x_3 \phantom{\!\!\!\!x^2}}  \\
	&\overset{\ytableaushort{33,1,\none}}{ x_1x_3^2} \quad
	\overset{\ytableaushort{31,2,\none}}{ x_1x_2x_3 \phantom{\!\!\!\!x^2}} \quad
	\overset{\ytableaushort{32,2,\none}}{ x_2^2x_3} \quad
	\overset{\ytableaushort{33,2,\none}}{ x_2x_3^2}.
	\end{align*}
	\caption{All coinversion-free fillings of shape $\lambda=(2,1)$ in three 
	variables with their respective contribution to \cref{eq: combinatorialFormula}.}
	\label{fig:SpecializedFillings}
\end{figure}
\end{example}

We shall also briefly make use of the \emph{modified Macdonald polynomials} further down.
Let $\lambda/\mu $ be a skew shape and let $F : \lambda/\mu \to [m]$ be a filling with no restrictions.
The notion of \emph{inversion triples} with the cases in \eqref{eq:invTriples} 
is extended to skew shapes, where the box $a$ may now lie outside the diagram.
\begin{equation}\label{eq:skewInversionTriple}
 \begin{ytableau}
 a_3 & b_1 \\
 \none[\circlearrowleft]  & \none[\scriptstyle\vdots] \\
\none & c_2 \\
\end{ytableau}
\qquad 
\qquad 
 \begin{ytableau}
 \infty & b_1 \\
 \none[\circlearrowleft]  & \none[\scriptstyle\vdots] \\
\none & c_2 \\
\end{ytableau}
\end{equation}
 Such boxes outside the diagram $\lambda/\mu$ are considered to have value $\infty$,
 in which case it is required $F(b_1)>F(c_2)$ in order for the triple to be an inversion triple.
The notion of descent is extended to skew shapes, so that $(i,j)$ is a descent of $F$
 if $F(i,j-1)<F(i,j)$, and we let $\maj(F) = \sum_{b \in \Des(F)} (\leg(b)+1)$
 as before. Similarly, the notion of \defin{weight} is extended in the natural way.
The (skew) \defin{modified Macdonald polynomial} $\macdonaldH_{\lambda/\mu}(\xvec;q,t)$
is defined as
\begin{equation}\label{eq:macdonaldHDef}
 \macdonaldH_{\lambda/\mu}(x_1,\dotsc,x_m;q,t) = \sum_{F : \lambda/\mu \to [m]} q^{\maj(F)} t^{\inv(F)} \xvec^F.
\end{equation}
This stabilizes to a symmetric function as $m \to \infty$.
Note that $[t^*]\macdonaldH_{\lambda}(\xvec;q,t) = \macdonaldE_\lambda(\xvec;q,0)$,
that is, the coefficient of the highest power of $t$ that 
appears is given by a specialized Macdonald polynomial.

\begin{proposition}[See \cite{AlexanderssonSawhney2017,AlexanderssonSawhney2019}]\label{prop:columnSets}
Let $\lambda$ be a partition with $\ell$ parts
and let $S_1,S_2,\dotsc,S_\ell$ be subsets of $[m]$ such that $|S_j| = \lambda_j$.
Then there is a unique coinversion-free filling $F$ of shape $\lambda'$ 
such that the entries in column $j$ of $F$ are given by $S_j$.
\end{proposition}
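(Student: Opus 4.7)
My plan is to prove this by a column-by-column induction. Because the inversion-triple condition in \eqref{eq:invTriples} only ever involves boxes in two adjacent columns, once columns $1,\dots,j-1$ have been placed, the arrangement of $S_j$ in column $j$ depends only on the entries $v_1,\dots,v_{\lambda_{j-1}}$ of column $j-1$ and on the set $S_j$. Column $1$ is forced: $S_1$ must be written in strictly decreasing order from top to bottom.

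For the inductive step the first technical ingredient is to unwind the inversion-triple condition with its subscript tie-breaking. Writing $u_1,\dots,u_{\lambda_j}$ for the proposed entries of column $j$ from top to bottom, I would show that the triple with $a=v_i$, $b=u_i$, $c=u_{i'}$ (for $i<i'\le\lambda_j$) is an inversion triple if and only if either $u_i<u_{i'}$ and $u_i\le v_i<u_{i'}$, or else $u_i>u_{i'}$ and $v_i\ge u_i$ or $v_i<u_{i'}$. The case $u_i=u_{i'}$ is ruled out by the condition itself, which is consistent with the remark that coinversion-free columns have distinct entries.

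\emph{Uniqueness.} Suppose two arrangements $u\neq u'$ of $S_j$ both satisfy the pairwise condition, and let $i$ be the smallest row on which they differ; without loss of generality $u_i<u'_i$. Locate $u'_i$ at some row $i''>i$ of $u$ and $u_i$ at some row $i'''>i$ of $u'$. Applied to the pair $(i,i'')$ in $u$, the pairwise condition forces $u_i\le v_i<u'_i$; applied to $(i,i''')$ in $u'$, it forces $v_i\ge u'_i$ or $v_i<u_i$. These two conclusions are logical negations of each other, contradicting the assumption.

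\emph{Existence.} I would construct $u$ greedily from top to bottom: at step $i$, let $R_i\subseteq S_j$ be the unused elements, and set $u_i=\max\{s\in R_i:s\le v_i\}$ if this set is nonempty, otherwise $u_i=\max R_i$. Verifying that every pair $(i,i')$ satisfies the pairwise condition is a short case analysis on whether $R_i\cap[1,v_i]$ is empty and on the sign of $u_i-u_{i'}$; in each case the greedy choice of $u_i$ together with $R_{i'}\subseteq R_i\setminus\{u_i\}$ gives exactly what is needed. The main obstacle is really the careful bookkeeping with the tie-breaking subscripts in extracting the pairwise characterization in the second paragraph; once that is in hand, both halves are routine.
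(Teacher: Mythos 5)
The paper offers no proof of this proposition (it is quoted from Alexandersson--Sawhney), so there is nothing internal to compare against; judged on its own, your argument is correct and is essentially the argument from the cited sources. Your pairwise reformulation of the inversion-triple condition is the right unwinding of the paper's counterclockwise/tie-breaking convention (it is consistent with the marked filling in \cref{fig:descents}, and it does rule out $u_i=u_{i'}$ as you claim), the uniqueness argument via the first row of disagreement is airtight, and your greedy rule --- largest unused element weakly below the left neighbour, otherwise the largest unused element --- is exactly the choice forced on $u_i$ by the pairs $(i,i')$, so existence follows as you describe.
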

\cref{prop:columnSets} implies that $\macdonaldE_{\lambda'}(\xvec;1;0) = \elementaryE_\lambda(\xvec)$.
In fact, this identity generalizes to the non-symmetric setting, see~\cite{AlexanderssonSawhney2019}.

We shall also make use of the \defin{transformed Hall--Littlewood polynomials},
$\hallLittlewoodT_{\mu}(\xvec;q)$.
There are many different ways to define these, for example via the 
\defin{Kostka--Foulkes polynomials} $K_{\lambda\mu}(q)$:
\begin{align}\label{eq:kostkaFoulkes}
 \hallLittlewoodT_{\mu}(\xvec;q) = \sum_{\lambda} K_{\lambda\mu}(q) \schurS_\lambda(\xvec).
\end{align}
We refer to \cite{Macdonald1995,DesarmenienLeclercThibon1994,TudoseZabrocki2003} for more background
and properties. 
For completeness, we provide a combinatorial method 
for computing $K_{\lambda\mu}(q)$ in \cref{sec:kostkaFoulkes}.
The transformed Hall--Littlewood polynomials are closely related to 
our specialization of the non-symmetric Macdonald polynomials.
\begin{proposition}[{See \cite[Thm. 14]{AlexanderssonSawhney2017} or \cite{qtCatalanBook}}]\label{prop:EasHL}
For any partition $\lambda$
 \[
  \macdonaldE_{\lambda}(\xvec;q;0) = \omega \hallLittlewoodT_{\lambda'}(\xvec;q).
 \]
\end{proposition}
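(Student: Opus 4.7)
The plan is to combine the combinatorial formula~\eqref{eq: combinatorialFormula} with the Burge--RSK correspondence of \cref{sec:burgeRSK}, and then match the resulting Schur expansion of $\macdonaldE_\lambda(\xvec;q,0)$ against the Schur expansion of $\omega\hallLittlewoodT_{\lambda'}(\xvec;q)$ coming from the Lascoux--Schützenberger interpretation of the Kostka--Foulkes polynomials. Combining $\omega\schurS_\mu=\schurS_{\mu'}$ with $K_{\mu\lambda'}(q)=\sum_{T\in\SSYT(\mu,\lambda')} q^{\cocharge(T)}$ and~\eqref{eq:kostkaFoulkes}, the proposition is equivalent to
\[
\macdonaldE_\lambda(\xvec;q,0) \;=\; \sum_\mu \schurS_\mu(\xvec)\sum_{T\in\SSYT(\mu',\lambda')} q^{\cocharge(T)},
\]
so it suffices to produce a weight- and statistic-preserving bijection from $\COF(\lambda,m)$ to pairs $(P,T)$ with $P\in\SSYT(\mu,\cdot)$ and $T\in\SSYT(\mu',\lambda')$ for varying $\mu$.

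The bijection I would use factors through Burge words. To $F\in\COF(\lambda,m)$ I associate a Burge word $W(F)$ by scanning the columns of $F$ from left to right and, within column $c$, emitting the biletters $\binom{c}{v}$ with $v$ ranging over the entries of that column in decreasing order; this is a legitimate Burge word because the entries within each column of a coinversion-free filling are distinct, and its top-row content is $\lambda'$. The inverse map is provided by \cref{prop:columnSets}. Applying Burge--RSK (\cref{prop:RSK}) then produces a pair $(P,Q)$ of common shape $\mu$, with $P\in\SSYT(\mu,\text{content}(F))$ and $Q^t\in\SSYT(\mu',\lambda')$; setting $T\coloneqq Q^t$ already yields $\xvec^F=\xvec^P$ and the correct index sets for the sum above.

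The genuine obstacle is the statistic transfer $\maj(F)=\cocharge(T)$. My plan is to introduce an intermediate cocharge statistic directly on the charge word of $W(F)$ (its second row) and to verify two things: first, that each descent box $b\in F$ contributing $\leg(b)+1$ to $\maj(F)$ matches the cyclic offset accumulated by the corresponding biletter when the charge word is decomposed into Lascoux--Schützenberger standard subwords — the column-then-decreasing ordering of biletters is engineered precisely to make this matching automatic; second, that under the Burge variant of insertion the cocharge of the charge word equals $\cocharge(Q^t)$ rather than $\cocharge(P)$. The second point is the non-obvious one, and I would attack it by induction on $|\lambda|$, inserting one biletter at a time and tracking where its contribution is recorded, ultimately invoking Knuth-invariance of cocharge on the plactic class of the inserted word together with the fact that, in the Burge setting, the recording tableau's transpose inherits the role typically played by $P$. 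Once $\maj(F)=\cocharge(Q^t)$ is in hand, summing $q^{\maj(F)}\xvec^F$ along the bijection immediately gives $\sum_\mu \schurS_\mu(\xvec)\sum_{T\in\SSYT(\mu',\lambda')} q^{\cocharge(T)}$, which coincides with $\omega\hallLittlewoodT_{\lambda'}(\xvec;q)$ and completes the proof.
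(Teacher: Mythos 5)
First, some context: the paper itself does not prove this proposition --- it is imported from the cited references --- and the closest in-paper analogue of your strategy is the Burge-word/RSK/charge machinery of \cref{ssec:schurExp}, whose non-skew case is logically equivalent to the statement. So your route is reasonable in outline, but it has two concrete problems. The first is the statistic: with the paper's conventions, $K_{\mu\lambda'}(q)$ is the \emph{charge} generating function over $\SSYT(\mu,\lambda')$ (see \eqref{eq:kostkaFoulkesDef}), not the cocharge one, and the two are genuinely different polynomials. Already for $\lambda=(2,1)$ one has $\macdonaldE_{(2,1)}(\xvec;q,0)=\schurS_{21}+q\schurS_{111}=\omega\hallLittlewoodT_{(2,1)}(\xvec;q)$, whereas your cocharge reformulation evaluates to $\schurS_{111}+q\schurS_{21}$. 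So the identity you reduce the proposition to is false, and any attempt to verify $\maj(F)=\cocharge(\cdot)$ must break: for instance the one-row filling $1\,2\,3$ of shape $(3)$ has $\maj=3$, while its charge word $123$ has cocharge $0$.

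Second, the step you yourself flag as non-obvious is exactly where the content of the proposition lies, and your plan for it cannot work as described. Charge and cocharge are Knuth-class invariants of the \emph{inserted} word, so the (co)charge of the bottom row of $W(F)$ equals that of the insertion tableau $P$; it cannot be transferred to $Q^t$ by Knuth invariance, and indeed the two disagree because $P$ and $Q^t$ have different contents (for the filling of shape $(2,1)$ with rows $(3,1)$ and $(2)$, the charge word $321$ has cocharge $3$ while $Q^t$ is the one-row tableau $112$ with cocharge $0$). On the other hand, the factorization into $\sum_\rho \schurS_\rho(\xvec)\,K_{\rho'\lambda'}(q)$ forces the statistic to sit on the content-$\lambda'$ tableau $Q^t$ while the monomial weight sits on $P$; these two requirements are incompatible with a naive appeal to Knuth invariance. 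Worse, even with the correct statistic (charge on $Q^t$) the needed identity holds only as an equidistribution, not box-by-box under Burge--RSK: the filling with rows $(2,3)$ and $(1)$ has $\maj=1$ but its $Q^t$ has charge $0$, while the filling with rows $(3,1)$ and $(2)$ has $\maj=0$ but its $Q^t$ has charge $1$. Establishing that joint equidistribution is precisely the substance of the cited Theorem~14 of Alexandersson--Sawhney; your sketch does not supply an argument for it, so the honest options are to cite that result, as the paper does, or to reproduce its $\maj$-to-charge analysis in full.
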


\subsection{Evaluations at roots of unity}
 
 Our first goal is to evaluate $\macdonaldE_{\lambda}(\xvec;q;0)$ when $q$
 is a root of unity. We need a few intermediate results.
 
\begin{proposition}[{See \cite[Thm. 2.1]{LascouxLeclercThibon1994}}]\label{prop:hlUnityRoot}
Let $\lambda$ be a partition of $n$.
Let $d$ be a positive integer and write $m_j(\lambda) = d m'_j+r_j$ with $0 \leq r_j < d$.
Furthermore, let $\xi$ be a primitive $d^\thsup$ root of unity. Then
\begin{align}
 \hallLittlewoodT_{\lambda}(\xvec;\xi) = 
 \hallLittlewoodT_{\tilde{\lambda}}(\xvec;\xi) \prod_{j=1}^n
 \left( \hallLittlewoodT_{(j^d)}(\xvec;\xi) \right)^{m'_j}
\end{align}
where $\tilde{\lambda}$ is the partition $(1^{r_1},2^{r_2},\dotsc,n^{r_n})$.
\end{proposition}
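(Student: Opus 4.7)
The plan is to proceed by induction on $\sum_j m'_j$, the total number of ``$d$-blocks'' that can be extracted from $\lambda$. The base case $\sum_j m'_j = 0$ forces $\lambda = \tilde\lambda$ and the identity is trivial. For the inductive step, assume $m_j(\lambda) \geq d$ for some $j$ and let $\mu$ denote the partition obtained from $\lambda$ by deleting $d$ copies of the part $j$. It suffices to prove the single factorization
\[
 \hallLittlewoodT_\lambda(\xvec;\xi) \;=\; \hallLittlewoodT_{(j^d)}(\xvec;\xi)\cdot \hallLittlewoodT_\mu(\xvec;\xi),
\]
for then induction, together with commutativity of multiplication in $\mathrm{Sym}$, assembles all the factors $\hallLittlewoodT_{(j^d)}(\xvec;\xi)^{m_j'}$ on the right and leaves $\tilde\lambda$ behind.

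My preferred route to this factorization is via the Bernstein-type creation operators $H_k(q)$ on $\mathrm{Sym}$, characterized by $\hallLittlewoodT_{(\lambda_1,\dotsc,\lambda_\ell)}(\xvec;q) = H_{\lambda_1}(q)\dotsm H_{\lambda_\ell}(q)\cdot 1$ and explicit quadratic $q$-commutation relations with coefficients in $\setZ[q]$. The crucial substep is to show that, after specializing to $q=\xi$, the operator $H_j(\xi)^d$ acts as multiplication by the symmetric function $\hallLittlewoodT_{(j^d)}(\xvec;\xi)$ and then commutes through each remaining $H_{\mu_i}(\xi)$. The mechanism is cyclotomic: every ``error'' term produced when one pushes $H_j(\xi)$ past $H_{\mu_i}(\xi)$ is a polynomial multiple of a factor of the form $(q^a-q^b)$; after $d$ such passes these factors aggregate into a multiple of the cyclotomic polynomial $\Phi_d(q)$ and therefore vanish at $\xi$. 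Once this is in place, the plethystic identification $H_j(\xi)^d\cdot 1 = \hallLittlewoodT_{(j^d)}(\xvec;\xi)$ (which can be read off by applying the operator to the vacuum) completes the inductive step.

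The main obstacle is verifying the cyclotomic commutation identity for $H_j(\xi)^d$. A more hands-on alternative, if the operator calculation becomes unwieldy, is to expand both sides in the Schur basis and to check equality at the level of Kostka--Foulkes polynomials using the combinatorial (charge) description recorded in \cref{sec:kostkaFoulkes}; every charge generating polynomial that appears splits into products of $q$-binomial coefficients whose reductions modulo $\Phi_d(q)$ are governed precisely by the $q$-Lucas congruence in \cref{eq:q-Lucas}. A third option, essentially the content-free formalization of the operator argument, is to invoke a plethystic identity of the form $\powerSum_d[g]$ for a suitable symmetric function $g$ to express $\hallLittlewoodT_{(j^d)}(\xvec;\xi)$, and then use that plethystic substitutions by $\powerSum_d$ interact predictably with products; this also ultimately reduces to the same cyclotomic cancellation but presents it as an algebraic identity in $\mathrm{Sym}$ rather than as an operator commutation.
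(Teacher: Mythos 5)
The paper offers no proof of this proposition: it is imported verbatim from Lascoux--Leclerc--Thibon \cite{LascouxLeclercThibon1994}, so there is no internal argument to compare yours against. Your main route --- generating $\hallLittlewoodT_\lambda$ by creation operators $H_k(q)$ and arguing that $H_j(\xi)^d$ becomes central at a primitive $d^\thsup$ root of unity --- is in fact the strategy of the cited source, so the overall plan is the right one.

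As written, however, the proposal has genuine gaps. The crucial step is only asserted: you claim that the error terms produced by pushing $H_j(\xi)$ past $H_{\mu_i}(\xi)$ carry factors $(q^a-q^b)$ that ``aggregate into a multiple of $\Phi_d(q)$'' after $d$ passes, but this is exactly the hard computation, and the mechanism is not a naive accumulation of scalar factors: the quadratic relations turn one operator word into a sum of new operator words, so one must track a genuine telescoping among the $d$ summands rather than multiply scalars. Second, even granting that $H_j(\xi)^d$ is central and that $H_j(\xi)^d\cdot 1=\hallLittlewoodT_{(j^d)}(\xvec;\xi)$, this only yields $\hallLittlewoodT_\lambda(\xvec;\xi)=\prod_j (H_j(\xi)^d)^{m_j'}\cdot\hallLittlewoodT_{\tilde{\lambda}}(\xvec;\xi)$; to obtain the stated product of symmetric functions when $\tilde{\lambda}\neq\emptyset$ you need $H_j(\xi)^d$ to be the operator of \emph{multiplication} by $\hallLittlewoodT_{(j^d)}(\xvec;\xi)$ on the whole ring of symmetric functions, not merely to have the right value on the vacuum --- an additional statement you would have to prove. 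Finally, the proposed fallback via Kostka--Foulkes polynomials and \cref{eq:q-Lucas} does not work as described: Kostka--Foulkes polynomials do not factor into products of $q$-binomial coefficients, and the Schur-coefficient identity you would need is precisely the nontrivial root-of-unity factorization of Kostka--Foulkes polynomials, not a consequence of the $q$-Lucas congruence.
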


\begin{proposition}[{See \cite[Thm. 2.2]{LascouxLeclercThibon1994}}]\label{prop:hlRectUnityRoot}
 Let $\xi$ be a primitive $n^\thsup$ root of unity. Then
 \begin{align}
  \hallLittlewoodT_{k^n}(\xvec;\xi) = (-1)^{k(n-1)}\powerSum_n[\completeH_k(\xvec)].
 \end{align}
\end{proposition}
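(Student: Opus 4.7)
The plan is to combine the Schur expansion of $\hallLittlewoodT_{k^n}$ via Kostka--Foulkes polynomials with a plethystic/Murnaghan--Nakayama expansion of the right-hand side. Both sides will turn out to be supported on the same family of partitions of $kn$, namely those with empty $n$-core, when $q = \xi$; matching coefficients there yields the identity.

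First, by~\eqref{eq:kostkaFoulkes},
\[
 \hallLittlewoodT_{k^n}(\xvec;\xi) \;=\; \sum_{\lambda \vdash kn} K_{\lambda,k^n}(\xi)\, \schurS_\lambda(\xvec),
\]
and one invokes the classical theorem (going back to Morris and Lascoux, and developed in the paper of Lascoux--Leclerc--Thibon cited in the statement) that $K_{\lambda,\mu}(\xi)$ vanishes unless $\lambda$ has empty $n$-core, in which case it factors explicitly through the $n$-quotient of $\lambda$.

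Second, I would obtain a Schur expansion of the right-hand side via plethysm. Starting from
\[
 \powerSum_n[\completeH_k] \;=\; \completeH_k[\powerSum_n] \;=\; \sum_{\mu \vdash k} z_\mu^{-1}\, \powerSum_{n\mu},
\]
and applying Murnaghan--Nakayama to $\powerSum_{n\mu} = \sum_\lambda \chi^\lambda_{n\mu}\, \schurS_\lambda$, the Stanton--White bijection between $n$-ribbon tableaux and tuples of ordinary tableaux indexed by the $n$-quotient shows that the coefficient of $\schurS_\lambda$ vanishes whenever $\lambda$ has nonempty $n$-core, and otherwise reduces to a simple signed expression in terms of the $n$-quotient data of $\lambda$.

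Finally, I would reconcile the two expansions coefficient by coefficient. The sign $(-1)^{k(n-1)}$ emerges as the accumulated height contribution from the $k$ ribbons of length $n$ tiling $\lambda$, each contributing a factor of $(-1)^{n-1}$ via the Murnaghan--Nakayama rule. The main obstacle will be the careful sign bookkeeping through the $n$-quotient bijection and the Stanton--White correspondence. A conceptually cleaner alternative is to use Jing's vertex-operator formulation, in which $\hallLittlewoodT_{k^n}$ is obtained by iterating a creation operator $n$ times; one then verifies the identity directly at the operator level at $q = \xi$, bypassing the Schur-by-Schur comparison entirely.
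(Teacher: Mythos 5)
The paper does not prove this proposition at all --- it is imported verbatim from Lascoux--Leclerc--Thibon (their Thm.~2.2) --- so there is no internal argument to compare yours against; what follows is an assessment of your sketch on its own terms. The right-hand side of your plan is sound: $\powerSum_n[\completeH_k]=\completeH_k[\powerSum_n]=\sum_{\mu\vdash k}z_\mu^{-1}\powerSum_{n\mu}$, and Littlewood's core/quotient decomposition (Murnaghan--Nakayama plus Stanton--White) gives
\[
\completeH_k[\powerSum_n]\;=\;\sum_{\lambda}\epsilon_n(\lambda)\,\schurS_\lambda,
\]
where $\lambda$ runs over partitions of $nk$ with empty $n$-core whose $n$-quotient is a tuple of one-row shapes, and $\epsilon_n(\lambda)$ is the product of $(-1)^{\height(R)}$ over the ribbons $R$ of any $n$-ribbon tiling of $\lambda$.

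The gap is on the left-hand side. The ``classical theorem'' you invoke --- that $K_{\lambda\mu}(\xi)$ vanishes unless $\lambda$ has empty $n$-core --- is false as stated for general $\mu$ (take $\mu=\lambda$: $K_{\lambda\lambda}(q)=1$ for every $\lambda$). What your coefficient comparison actually needs is the precise evaluation $K_{\lambda,(k^n)}(\xi)=(-1)^{k(n-1)}\epsilon_n(\lambda)$ when $\lambda$ has empty $n$-core and all-row $n$-quotient, and $K_{\lambda,(k^n)}(\xi)=0$ otherwise (note that vanishing is required even for some empty-core $\lambda$, namely those whose quotient is not a tuple of rows). That evaluation is exactly the content of the proposition, so citing it as a known input makes the argument circular; and \cref{prop:hlUnityRoot} cannot supply it, since it only reduces general $\mu$ to rectangles, for which the present statement is the base case. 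A second, smaller error: the sign does not arise because ``each of the $k$ ribbons contributes $(-1)^{n-1}$.'' A ribbon of length $n$ contributes $(-1)^{\height(R)}$ with $0\le\height(R)\le n-1$ depending on its shape, so $\epsilon_n(\lambda)$ genuinely varies with $\lambda$ (already $\powerSum_2=\schurS_2-\schurS_{11}$); these varying signs must cancel against the signs of $K_{\lambda,(k^n)}(\xi)$, while $(-1)^{k(n-1)}$ is a separate global normalization. Your closing suggestion --- realizing $\hallLittlewoodT_{(k^n)}$ as $B_k^n\cdot 1$ with Jing's creation operators and specializing $q=\xi$ at the operator level --- is the more promising route to a self-contained proof, but as written it is only a gesture.
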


\begin{theorem}\label{thm:macdonaldEAtUnityRoots}
Let $\mu$ be an integer partition and $n \in \setN$. 
Furthermore, let $\xi$ be a primitive $n^\thsup$ root of unity
and $d$ a divisor of $n$. Then 
\begin{align}\label{eq:macdonaldEAtUnityRoots}
 \macdonaldE_{n\mu}(1^m; \xi^d,0) = \prod_{j\geq 1} \binom{m}{j}^{d\cdot m_j(\mu')}.
\end{align}
\end{theorem}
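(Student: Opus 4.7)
The plan is to combine Propositions~\ref{prop:EasHL}, \ref{prop:hlUnityRoot}, \ref{prop:hlRectUnityRoot} and Lemma~\ref{lem:plethOmega}, and then specialize at $\xvec=1^m$.

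First, I would observe that conjugating $n\mu$ simply stretches every column: if $\mu'$ has $m_j(\mu')$ parts equal to $j$, then $(n\mu)'$ has $n\cdot m_j(\mu')$ parts equal to $j$. By \cref{prop:EasHL}, we have
\[
\macdonaldE_{n\mu}(\xvec;\xi^d,0) = \omega\, \hallLittlewoodT_{(n\mu)'}(\xvec;\xi^d).
\]
Since $\xi^d$ is a primitive $(n/d)^\thsup$ root of unity and each multiplicity $m_j((n\mu)') = n\cdot m_j(\mu')$ is divisible by $n/d$ with quotient $d\cdot m_j(\mu')$ and remainder $0$, \cref{prop:hlUnityRoot} applies with $\tilde\lambda = \emptyset$, giving
\[
\hallLittlewoodT_{(n\mu)'}(\xvec;\xi^d) = \prod_{j\geq 1} \hallLittlewoodT_{(j^{n/d})}(\xvec;\xi^d)^{d\cdot m_j(\mu')}.
\]

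Next, \cref{prop:hlRectUnityRoot} evaluates each rectangular factor:
\[
\hallLittlewoodT_{(j^{n/d})}(\xvec;\xi^d) = (-1)^{j(n/d-1)} \powerSum_{n/d}[\completeH_j(\xvec)].
\]
Applying $\omega$, which is a ring homomorphism, the main task is to compute $\omega\powerSum_{n/d}[\completeH_j]$. By \cref{lem:plethOmega} applied to $f = \completeH_j$ of degree $j$, rearranged,
\[
\omega\bigl(\powerSum_{n/d}[\completeH_j]\bigr) = (-1)^{(n/d+1)j}\, \powerSum_{n/d}[\omega \completeH_j] = (-1)^{(n/d+1)j}\, \powerSum_{n/d}[\elementaryE_j].
\]
Combining with the prefactor $(-1)^{j(n/d-1)}$ from above, the total sign per rectangle is $(-1)^{2jn/d} = 1$, so all signs cancel cleanly — this is the one place I would double-check parities, but it works out since the two exponents differ by an even integer. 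Hence
\[
\macdonaldE_{n\mu}(\xvec;\xi^d,0) = \prod_{j\geq 1} \bigl(\powerSum_{n/d}[\elementaryE_j(\xvec)]\bigr)^{d\cdot m_j(\mu')}.
\]

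Finally, specialize $\xvec=1^m$. Since the plethystic substitution $\powerSum_k$ replaces each $x_i$ by $x_i^k$, evaluating at $x_1=\dots=x_m=1$ just gives $\elementaryE_j(1^m)=\binom{m}{j}$, yielding the claimed identity. The main conceptual step is the vanishing of $\tilde\lambda$ in \cref{prop:hlUnityRoot} (which is why the hypothesis that \emph{every} part of $\lambda$ is a multiple of $n$ is essential); the only arithmetic requiring care is the sign cancellation after applying $\omega$.
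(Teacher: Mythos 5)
Your proposal is correct and follows essentially the same route as the paper: \cref{prop:EasHL} plus \cref{prop:hlUnityRoot} with $\tilde\lambda=\emptyset$, then \cref{prop:hlRectUnityRoot} and \cref{lem:plethOmega} to cancel the signs, and finally the specialization $\elementaryE_j(1^m)=\binom{m}{j}$ (the paper routes this last step through the principal specialization $(1,q,\dotsc,q^{m-1})$ before setting $q=1$, but that is an immaterial difference). Your explicit verification that the sign is $(-1)^{2jn/d}=1$ is a welcome detail the paper leaves implicit.
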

\begin{proof}
We have that $\xi^d$ is a primitive $\left(\frac{n}{d}\right)^\thsup$ root of unity.
By \cref{prop:hlUnityRoot},
\begin{equation*}
 \hallLittlewoodT_{(n\mu)'}(\xvec; \xi^d) = 
 \prod_{j\geq 1}
 \left(\hallLittlewoodT_{j^{n/d}}(\xvec; \xi^d)\right)^{d \cdot  m_j(\mu')}.
\end{equation*}
Using \cref{prop:hlRectUnityRoot}, we then have
\begin{equation}\label{eq:hlRectPleth}
 \hallLittlewoodT_{(n\mu)'}(\xvec; \xi^d)  = 
 \prod_{j\geq 1}
 \left( (-1)^{j(n/d-1)}\powerSum_{n/d}[\completeH_j(\xvec)] \right)^{d \cdot  m_j(\mu')}.
\end{equation}
Applying the $\omega$-involution on both sides of \eqref{eq:hlRectPleth} and using 
\cref{prop:EasHL} and \cref{lem:plethOmega} gives
\begin{align*}
 \macdonaldE_{n\mu}(\xvec; \xi^d,0) 
 =  \prod_{j\geq 1}
 \left( \powerSum_{n/d}[\elementaryE_j(\xvec)] \right)^{d \cdot  m_j(\mu')}
 =  \prod_{j\geq 1}
 \left( \elementaryE_j(x_1^{\frac{n}{d}},x_2^{\frac{n}{d}},\dotsc)  \right)^{d \cdot  m_j(\mu')}.
\end{align*}
Finally, we set $(x_1,\dotsc,x_m)=(1,q,q^2,\dotsc,q^{m-1})$ and remaining variables are set to zero.
Formulas for the principal specialization of elementary symmetric functions, see \cite[p. 303]{StanleyEC2},
give
\begin{equation}\label{eq:macdonaldPrincSpec}
 \macdonaldE_{n\mu}(1,q,q^2,\dotsc,q^{m-1}; \xi^d,0) =  
 \prod_{j\geq 1} \left( q^{\frac{n}{d}\binom{j}{2}} \qbinom{m}{j}_{q^\frac{n}{d}} \right)^{d \cdot m_j(\mu')}.
\end{equation}
In particular, with $q=1$ we obtain \eqref{eq:macdonaldEAtUnityRoots}.
\end{proof}

\begin{example}
 Let $\mu = 885322$, $n=12$ and $d=4$. 
 Then $\mu'=66433222$ and
 \[
 \macdonaldE_{12\mu}(1^m; e^{\frac{2\pi i 4}{12}},0) = \left(\binom{m}{2}^{3}\binom{m}{3}^{2}
 \binom{m}{4}\binom{m}{6}^2 \right)^4.
 \]
\end{example}

\section{Cyclic sieving on coinversion-free fillings}\label{sec:csp1}

Recall that $\COF(n \lambda,m)$ denotes the set of coinversion-free fillings 
of shape $n \lambda$ and entries in $[m]$.
Let $\phi$ act on $\COF(n \lambda,m)$ by cyclically shifting the 
first $n$ columns one step to the right,
the next $n$ columns one step to the right, and so on.
Finally, the elements in each column are rearranged so that the result is 
again a coinversion-free filling in $\COF(n \lambda, m)$.
This action is well-defined according to \cref{prop:columnSets}.
Clearly, $\phi$ generates a cyclic group of order $n$ acting on $\COF(n\lambda,m)$,
see \cref{fig:phiAction} for an example.
\begin{figure}[!ht]
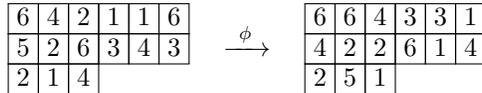

	\begin{equation*}
	\ytableaushort{642116, 526343, 214} \quad \xrightarrow{ \ \phi \ } \quad \ytableaushort{664331, 422614, 251}
	\end{equation*}
	\caption{The action of $\phi$ on a filling of shape $n\lambda = 3(2,2,1)$.}   
	\label{fig:phiAction}  
\end{figure}
We are now ready to prove one of the main results of the paper.
\begin{theorem}\label{thm:mainCSP}
For every integer partition $\lambda$ and $n\geq 1$, the triple 
\[
 \left(\COF(n \lambda,m), \langle \phi \rangle, \macdonaldE_{n \lambda}(1^m,q,0) \right)
\]
exhibits the cyclic sieving phenomenon. 
Moreover, the family 
\[
\{ \left(\COF(n \lambda,m), \langle \phi \rangle, \macdonaldE_{n \lambda}(1^m,q,0) \right) \}_{n=1}^\infty
\]
is a Lyndon-like family.
\end{theorem}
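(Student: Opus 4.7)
The plan is to reduce the CSP to counting column--subset data and then quote \cref{thm:macdonaldEAtUnityRoots}. The key observation, provided by \cref{prop:columnSets}, is that a coinversion-free filling $F \in \COF(n\lambda, m)$ is uniquely recovered from the list $(S_1, S_2, \dotsc, S_{n\lambda_1})$ of its column sets, since each column set determines the decreasing rearrangement needed to make $F$ coinversion-free. Therefore, the $\phi$-action on $\COF(n\lambda, m)$ corresponds precisely to cyclically rotating the tuple $(S_{(k-1)n+1}, \dotsc, S_{kn})$ one step for each $k = 1, \dotsc, \lambda_1$, independently in each block.

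The crucial structural point is that within the $k$-th block of $n$ columns of $n\lambda$, every column has the same height: namely, column $j$ of $n\lambda$ has height $(n\lambda)'_j = \lambda'_{\lceil j/n \rceil}$, so all columns in block $k$ have height $\lambda'_k$. Each block therefore contributes a choice of an $n$-tuple of $\lambda'_k$-element subsets of $[m]$, independently of the other blocks, and $\phi$ rotates each such tuple independently.

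For any $d \in \setZ$ with $e \coloneqq \gcd(n,d)$, the map sending $i \mapsto i+d \pmod n$ has exactly $e$ orbits on $\{1,\dotsc,n\}$, so a tuple $(S_1,\dotsc,S_n)$ is fixed by $\phi^d$ iff the subsets are constant on each of these $e$ orbits. Hence the number of fixed tuples in block $k$ is $\binom{m}{\lambda'_k}^e$, and taking the product over all $\lambda_1$ blocks gives
\[
 \#\{F \in \COF(n\lambda, m) : \phi^d F = F\} \;=\; \prod_{k=1}^{\lambda_1} \binom{m}{\lambda'_k}^{e} \;=\; \prod_{j\geq 1} \binom{m}{j}^{e\cdot m_j(\lambda')}.
\]
On the other hand, writing $\xi^d = \eta^e$ where $\eta$ is any primitive $(n/e)^{\mathrm{th}}$ root of unity extended to a primitive $n^{\mathrm{th}}$ root of unity, \cref{thm:macdonaldEAtUnityRoots} (applied with the divisor $e \mid n$) yields $\macdonaldE_{n\lambda}(1^m; \xi^d, 0) = \prod_{j\geq 1} \binom{m}{j}^{e\cdot m_j(\lambda')}$, matching the fixed-point count. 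This establishes the CSP.

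For the Lyndon-like property, I would simply compare $f_n(e^{2\pi i/d})$ with $f_{n/d}(1)$ for $d \mid n$. Using \cref{thm:macdonaldEAtUnityRoots} with divisor $n/d$, the first equals $\prod_{j}\binom{m}{j}^{(n/d)m_j(\lambda')}$, while the second equals $\#\COF((n/d)\lambda, m) = \prod_{k=1}^{\lambda_1}\binom{m}{\lambda'_k}^{n/d}$ by the same block-decomposition argument (every column in block $k$ of $(n/d)\lambda$ has height $\lambda'_k$ and is chosen freely among $\binom{m}{\lambda'_k}$ subsets). The two expressions visibly agree. The main potential snag is only bookkeeping — matching the ``$d$'' in the statement of \cref{thm:macdonaldEAtUnityRoots} (a divisor of $n$) with the arbitrary exponent appearing in the definition of CSP — but since the theorem's output depends only on the order of $\xi^d$, this is handled by replacing $d$ with $\gcd(n,d)$ as above.
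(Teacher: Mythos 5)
Your proposal is correct and follows essentially the same route as the paper: reduce to column-set tuples via \cref{prop:columnSets}, count $\phi^d$-fixed fillings block by block, and match against \cref{thm:macdonaldEAtUnityRoots}. The only difference is cosmetic — you handle arbitrary $d$ explicitly via $\gcd(n,d)$ and spell out the Lyndon-like verification, whereas the paper restricts to $d\mid n$ and leaves those routine reductions implicit.
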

\begin{proof}
We first need to compute the number of elements fixed under $\phi^d$ whenever $d|n$.
Since coinversion-free fillings are uniquely determined by their column sets,
a coinversion-free filling in $\COF(n \lambda,m)$ fixed under $\phi^d$
is uniquely determined by the first $d$ columns in each consecutive block of $n$
columns.
Hence,
\begin{equation}
 \#\{ F \in \COF(n \lambda ,m) : \phi^d \cdot F = F\} = \#\COF\left(d\lambda ,m\right) = 
 \prod_{j\geq 1}\binom{m}{k}^{d \cdot m_j(\lambda')}.
\end{equation}
By using \cref{thm:macdonaldEAtUnityRoots}, the statements in
the theorem now follows.
\end{proof}

There is another natural group action on coinversion-free fillings. 
Pick $\sigma \in \symS_m$ and let $\sigma : \COF(\lambda, m) \to \COF(\lambda, m)$
act by letting $F' = \sigma(F)$ be given by $F'(i,j)=\sigma F(i,j)$ for all $(i,j)\in \lambda$,
followed by rearranging the elements in each column of $F'$ 
to obtain a new coinversion-free filling.
By \cref{prop:columnSets}, this action is well defined. 
Let $C_M = \langle \sigma \rangle$. 
Recall the notion of $C_M$ acting nearly freely 
on $[m]$ from \cref{ex:CSPonSubsets}. This induces a $C_M$-action on $\COF(\lambda, m)$.

\begin{theorem}
	Let $n$ and $m$ be positive integers and let $\lambda$ be a partition. If $C_n$ acts nearly freely on $[m]$, then  
	\[
	\left(\COF(\lambda,m), C_n, \macdonaldE_\lambda(1,q,q^2,\dots,q^{m-1};1,0) \right)
	\]
	exhibits the cyclic sieving phenomenon.
\end{theorem}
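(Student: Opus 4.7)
My plan is to reduce this to the classical subset CSP of \cref{ex:CSPonSubsets} by exploiting the column-set parametrization of coinversion-free fillings given by \cref{prop:columnSets}, and then to bundle the resulting CSPs via a product principle.

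First, I would invoke \cref{prop:columnSets} to set up a bijection
\[
\Phi \colon \COF(\lambda, m) \longrightarrow \prod_{j=1}^{\lambda_1} \binom{[m]}{\lambda'_j}
\]
sending each filling $F$ to the tuple $(S_1(F), \dots, S_{\lambda_1}(F))$ of its column-sets. The key observation is that $\Phi$ intertwines the two $C_n$-actions: applying $\sigma$ entrywise to $F$ and then re-sorting the columns to restore the coinversion-free condition (whose uniqueness is exactly \cref{prop:columnSets}) replaces each column-set $S_j$ by $\sigma(S_j)$. Hence the $C_n$-action on $\COF(\lambda, m)$ corresponds under $\Phi$ to the diagonal $C_n$-action on the product.

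By \cref{ex:CSPonSubsets}, each triple $\bigl(\binom{[m]}{\lambda'_j}, C_n, \qbinom{m}{\lambda'_j}_q\bigr)$ exhibits the cyclic sieving phenomenon. Since the fixed-point set of a diagonal action factors as the product of the individual fixed-point sets, evaluating $\prod_j \qbinom{m}{\lambda'_j}_q$ at $\xi^d$ yields the product of the corresponding fixed-point counts, which is the total number of fixed points of $g^d$ on $\COF(\lambda, m)$. Thus the triple $\bigl(\COF(\lambda, m), C_n, \prod_j \qbinom{m}{\lambda'_j}_q\bigr)$ exhibits CSP.

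To finish, I would identify this product with the polynomial in the statement. Combining the identity $\macdonaldE_\lambda(\xvec; 1, 0) = \elementaryE_{\lambda'}(\xvec) = \prod_j \elementaryE_{\lambda'_j}(\xvec)$ from the comment after \cref{prop:columnSets} with the classical principal specialization $\elementaryE_k(1, q, \dots, q^{m-1}) = q^{\binom{k}{2}} \qbinom{m}{k}_q$ produces
\[
\macdonaldE_\lambda(1, q, \dots, q^{m-1}; 1, 0) = q^{a} \prod_j \qbinom{m}{\lambda'_j}_q, \qquad a = \sum_j \binom{\lambda'_j}{2}.
\]
The only nontrivial obstacle in this plan is the $q^{a}$ prefactor: I would need to verify that $\xi_n^{ad}\prod_j \qbinom{m}{\lambda'_j}_{\xi_n^d}$ still produces the correct fixed-point count at every root of unity. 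I expect this to follow from \cref{eq:q-Lucas} applied under the nearly-free hypothesis, which forces strong congruence constraints on $m$ and on the $\lambda'_j$ whenever any $q$-binomial factor at $\xi_n^d$ is nonzero, yielding either the required divisibility $n \mid ad$ or the vanishing of the product.
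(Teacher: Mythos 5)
Your reduction is exactly the one the paper uses: identify $\COF(\lambda,m)$ with the product of the sets of $\lambda'_j$-subsets of $[m]$ via \cref{prop:columnSets}, check that this bijection is equivariant, and then combine the $k$-subset CSP of \cref{ex:CSPonSubsets} with the fact that CSP is closed under direct products. You have also correctly isolated the one genuinely delicate point, namely the prefactor $q^{a}$ with $a=\sum_j\binom{\lambda'_j}{2}$ arising from $\elementaryE_k(1,q,\dotsc,q^{m-1})=q^{\binom{k}{2}}\qbinom{m}{k}_q$; the paper's own proof passes over this silently when it asserts that the product of the subset CSP polynomials ``agrees with'' \eqref{eq:macdonaldPrincSpec}.

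However, the step you defer does not go through: the nearly-free hypothesis does \emph{not} force $\xi^{ad}=1$ whenever $\prod_j\qbinom{m}{\lambda'_j}_{\xi^d}\neq 0$. Take $\lambda=(1,1)$, so $\lambda'=(2)$ and $a=\binom{2}{2}=1$, with $m=n=2$ and $C_2$ generated by $\sigma=(1\,2)$, which acts nearly freely on $[2]$. Here $\COF((1,1),2)$ consists of the single column with entries $\{1,2\}$, which is fixed by $\sigma$, so the fixed-point count at $d=1$ is $1$; but $\macdonaldE_{(1,1)}(1,q;1,0)=\elementaryE_2(1,q)=q$ evaluates to $-1$ at the primitive second root of unity. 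Consistent with your plan, $q$-Lucas gives $\qbinom{2}{2}_{-1}=1\neq 0$, yet the prefactor contributes $(-1)^{ad}=-1$; more generally, whenever $e=n/\gcd(n,d)$ is even the prefactor can contribute a nontrivial sign even though every $q$-binomial factor survives. So the congruence you hope for fails, and in fact the triple with the polynomial as written is not a CSP-triple: the argument (yours and the paper's) actually establishes cyclic sieving for the normalized polynomial $\prod_j\qbinom{m}{\lambda'_j}_q=q^{-a}\,\macdonaldE_\lambda(1,q,\dotsc,q^{m-1};1,0)$, and the statement should be read with that correction.
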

\begin{proof}
The proof of the case when $\lambda$ is rectangular can be found in \cite{Uhlin2019},
and extends to the general case without extra effort. 
Suppose $C_n$ is generated by $\sigma$. 
Write $\lambda'=(\lambda_1',\dots, \lambda_c')$ and consider the set of $c$-tuples $\mathbf{s}=(s_1, \dots, s_c)$, so that $s_i \subseteq [m]$ and $\#s_i=\lambda_i'$. 
We let $C_n$ act on such $c$-tuples by letting 
$\sigma \mathbf{s}=(\sigma s_1, \dots, \sigma s_c)$. 
\cref{prop:columnSets} implies that we can identify a coinversion-free filling with its columns sets. 
Hence, there is a bijection from $\COF(\lambda,m)$ to the set of $c$-tuples on the above form 
and it is clear that this bijection is equivariant with respect to $\sigma$, 
so it suffices to show that the set of $c$-tuples exhibits the cyclic sieving phenomenon. 
	
But the set of $c$-tuples is a direct product of sets which we know 
exhbit the cyclic sieving phenomenon --- this is just \cref{ex:CSPonSubsets}. 
Furthermore, the product of the CSP-polynomials in the example agree with \eqref{eq:macdonaldPrincSpec} (when $\xi=1$). 
It is straightforward to show that that CSP is preserved under
taking direct products (see \cite[Lem. 4.13 $(i)$]{Uhlin2019}) so we are done.
\end{proof}
\begin{example}
	Note that the above theorem does in general not hold if $C_M$ does not act nearly freely on $[m]$. 
	Suppose that $n=k=1$ and $C_4$ is generated by the 
	permutation $\langle (1234)\rangle$. 
	Then, the CSP-polynomial $\macdonaldE_{n^k}(1,q,q^2,\dots,q^{m-1};1,0)=1+q+q^2+q^3+q^4+q^5$, 
	which evaluated at a primitive $4^\thsup$ of unity $\xi=i$ yields $1+i$. 
\end{example}

\begin{figure}[!ht]
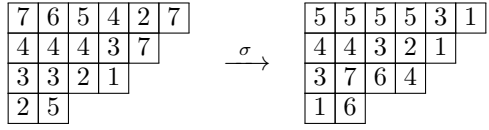

	\begin{equation*}
	\ytableaushort{765427, 44437, 3321, 25} \quad \xrightarrow{ \ \sigma \ } \quad \ytableaushort{555531, 44321, 3764, 16} 
	\end{equation*}
	\caption{An example of the action of $\sigma=(1234567)$.}
\end{figure}

\section{Refined CSP on $\COF(n\lambda)$}\label{sec:cspRefined}

Let $[\monomial_\nu]f$ denote the coefficient of $\monomial_\nu$
in the symmetric function $f$.
It follows from \cref{prop:EasHL} and \eqref{eq:kostkaFoulkes} that
\begin{equation}\label{eq:macdonaldECoefficient}
  [\monomial_\nu] \macdonaldE_{\lambda}(\xvec;q;0) =\sum_{\mu} K_{\mu\nu}(1) K_{\mu'\lambda'}(q).
\end{equation}

\begin{theorem}[{See \cite[Thm. 1.4]{Rhoades2010b}}]\label{thm:rhoadesMatrices}
Let $\mu$ and $\nu$ be compositions of $n$, with cyclic symmetries $a$ and $b$, respectively.
Let $X(\mu,\nu)$ be the set of $\length(\lambda) \times \length(\nu)$
binary matrices with row content $\mu$ and column-content $\nu$.
Then the product $\setZ_{\length(\mu)/a} \times \setZ_{\length(\nu)/b}$ act on $X$
by $a$-fold row-rotation and $b$-fold column-rotation, respectively.
Then 
\[
 \left(X(\mu,\nu), \setZ_{\length(\mu)/a} \times \setZ_{\length(\nu)/b},
 \delta_n(q,t) \sum_{\lambda \vdash n} K_{\lambda\mu}(q) K_{\lambda' \nu}(t)
 \right)
\]
exhibits the bi-cyclic sieving phenomenon.
Here, $\delta_n(q,t)$ is a messy polynomial taking on values  $\pm 1$
at relevant roots of unity. Furthermore, one can check that $\delta_n(q,t) \equiv 1$ in the case $q=1$.
\end{theorem}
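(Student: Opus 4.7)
The plan is to follow the representation-theoretic strategy of Rhoades, exploiting the Kostka--Foulkes-at-roots-of-unity identities already used in this paper. The first step is to realize $X(\mu,\nu)$ as a natural basis of the tensor of Young permutation modules $M^\mu \otimes M^\nu$: a $0/1$ matrix is encoded by its sequence of row-subsets of $[\length(\nu)]$, giving a bijection to the double-coset space $\symS_\mu \backslash \symS_n / \symS_\nu$. Under this identification, the $a$-fold row rotation and $b$-fold column rotation correspond to concrete long-cycle elements permuting the row blocks and column blocks respectively.

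I would then interpret the polynomial as a bigraded Hall-inner-product pairing. Starting from \cref{eq:kostkaFoulkes}, a straightforward manipulation gives
\[
 \sum_{\lambda \vdash n} K_{\lambda\mu}(q)\, K_{\lambda'\nu}(t) = \langle \hallLittlewoodT_\mu(\xvec;q),\ \omega\hallLittlewoodT_{\nu}(\xvec;t) \rangle,
\]
equipping the polynomial with the structure of a bigraded $\symS_n$-character. The fixed-point side is handled via Burnside applied to the basis $X(\mu,\nu)$, so the bi-CSP reduces to evaluating this pairing at a bi-root-of-unity $(\xi,\zeta)$ and matching with the orbit count.

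The evaluation is carried out as in the proof of \cref{thm:macdonaldEAtUnityRoots}, but now applied to both arguments. Using \cref{prop:hlUnityRoot} on each factor splits each Hall--Littlewood polynomial at a root of unity into a product of rectangular-shape Hall--Littlewoods indexed by $m_j(\mu)$ and $m_j(\nu)$. Invoking \cref{prop:hlRectUnityRoot} and then \cref{lem:plethOmega} converts these into plethysms of elementary symmetric functions, whose principal specialization at $q=t=1$ is a product of binomial coefficients. By \cref{ex:CSPonSubsets} (and the fact that CSP passes through direct products), this product counts exactly those matrices whose row-subsets and column-subsets are separately invariant under the prescribed rotations, which is precisely the desired fixed-point count.

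The principal obstacle is the polynomial $\delta_n(q,t)$. Each application of \cref{lem:plethOmega} introduces a sign $(-1)^{(k+1)n}$ depending on the multiplicities of the parts, and these signs compound across the two $\omega$-involutions implicit in the calculation. The delicate step is collecting them into a single explicit polynomial that takes values in $\{\pm 1\}$ at every bi-root of unity relevant to the action, and that collapses to $+1$ when $q=1$ so the column-rotation-only specialization is sign-free. This sign bookkeeping --- rather than the representation-theoretic input --- is what forces the $\delta_n(q,t)$ correction factor in the statement and makes a fully clean formulation impossible.
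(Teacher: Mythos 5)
The paper itself offers no proof of this statement --- it is imported wholesale from Rhoades \cite[Thm.~1.4]{Rhoades2010b} --- so the only thing to assess is your sketch on its own merits. The broad strategy (write the polynomial as the Hall pairing $\langle \hallLittlewoodT_\mu(\xvec;q),\, \omega\hallLittlewoodT_\nu(\xvec;t)\rangle$, which is correct, and evaluate at roots of unity via \cref{prop:hlUnityRoot,prop:hlRectUnityRoot}) is indeed the right one and is essentially what Rhoades does. But your opening identification of $X(\mu,\nu)$ with the double cosets $\symS_\mu\backslash\symS_n/\symS_\nu$ is false: double cosets biject with $\setN$-valued matrices of row content $\mu$ and column content $\nu$ and are counted by $\langle\completeH_\mu,\completeH_\nu\rangle$, whereas $0/1$-matrices are counted by $\langle\completeH_\mu,\elementaryE_\nu\rangle=\sum_\lambda K_{\lambda\mu}K_{\lambda'\nu}$; already $\mu=\nu=(2)$ gives one double coset but no $1\times1$ binary matrix with entry $2$. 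The relevant module is a sign-twisted tensor of permutation modules, consistent with the $\omega$ in your pairing, not the double-coset algebra.

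The more serious gaps are in the evaluation and fixed-point steps. There is no principal specialization in this theorem: after factoring both arguments at roots of unity you must compute a Hall pairing of the form $\langle \powerSum_{c}[\,\cdot\,]\cdot(\cdots),\ \powerSum_{d}[\,\cdot\,]\cdot(\cdots)\rangle$, which is done by expanding in the power-sum basis and using $\langle\powerSum_\lambda,\powerSum_\rho\rangle = z_\lambda\delta_{\lambda\rho}$ (equivalently, the adjoint of plethysm by $\powerSum_k$); it is not ``a product of binomial coefficients'' obtained by setting $x_i=1$ as in \cref{thm:macdonaldEAtUnityRoots}, and this pairing of two plethysms is the technical heart of the proof that your sketch omits. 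Relatedly, the matrices fixed by a pair (row rotation, column rotation) acting simultaneously are \emph{not} those that are ``separately invariant'' under each rotation: for instance $\left(\begin{smallmatrix}1&0\\0&1\end{smallmatrix}\right)$ is fixed by the simultaneous row- and column-swap but by neither factor alone. Since $X(\mu,\nu)$ is not a direct product of two sets carrying independent actions, the appeal to \cref{ex:CSPonSubsets} together with closure of CSP under direct products does not apply. Until these two computations are carried out honestly, the claim that $\delta_n(q,t)$ is mere sign bookkeeping from \cref{lem:plethOmega} cannot be certified either.
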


By only considering the action on the 
columns in \cref{thm:rhoadesMatrices}, we get the following corollary.
\begin{corollary}
Let $n \in \setN$, $\lambda \vdash k$ where $\ell = \length(\lambda)$, 
$\nu \vDash nk$ with $m$ parts and let $X(\nu, n \lambda)$ 
be the set of $m \times n \lambda_1$ binary matrices 
with row-content $\nu$ and column-content given by the conjugate of $n\lambda$.
Let $\setZ_n$ act on $X(\nu, n \lambda)$ by cyclic rotation of each block of $n$ consecutive columns.
Then 
\[
 \left(X(\nu, n \lambda),\setZ_n, \sum_{\mu \vdash nk} K_{\mu,\nu}(1) K_{\mu',(n\lambda)'}(q) \right)
\]
is a CSP-triple.
\end{corollary}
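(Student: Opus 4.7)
The Corollary's CSP polynomial coincides with $[\monomial_\nu] \macdonaldE_{n\lambda}(\xvec; q, 0)$ by \eqref{eq:macdonaldECoefficient}, and \cref{prop:columnSets} gives a bijection between $X(\nu, n\lambda)$ and the set $\COF(n\lambda, \nu)$ of coinversion-free fillings of shape $n\lambda$ with content $\nu$. Under this bijection, cyclic rotation of each block of $n$ columns in a matrix corresponds to the action $\phi$ from \cref{sec:csp1}, because cyclically shifting the column sets and then applying \cref{prop:columnSets} produces precisely the rotated filling. The Corollary is therefore equivalent to the refined CSP assertion that
\[
\bigl( \COF(n\lambda, \nu),\, \langle \phi \rangle,\, [\monomial_\nu] \macdonaldE_{n\lambda}(\xvec; q, 0) \bigr)
\]
is a CSP-triple (the content of the forthcoming \cref{thm:refinedMacdonaldCSP}); my plan is to verify this assertion directly, refining the argument of \cref{thm:mainCSP} so that it tracks the content.

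Fix a divisor $d$ of $n$ and let $\xi$ be a primitive $n^\thsup$ root of unity. On the combinatorial side, a filling in $\COF(n\lambda, \nu)$ is $\phi^d$-fixed precisely when its column sets have period $d$ inside each block of $n$ consecutive columns. By \cref{prop:columnSets} such a filling is uniquely determined by the first $d$ column sets of each block, and those same column sets (of sizes $\lambda'_k$) define, again by \cref{prop:columnSets}, a unique coinversion-free filling of shape $d\lambda$. This yields a bijection between the $\phi^d$-fixed subset of $\COF(n\lambda, \nu)$ and $\COF(d\lambda, (d/n)\nu)$ whenever $n/d$ divides every $\nu_i$; otherwise the fixed set is empty since the reduced row sums would fail to be integers.

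On the algebraic side, the plethystic computation carried out in the proof of \cref{thm:macdonaldEAtUnityRoots} (applied before any principal specialization of $\xvec$) yields
\[
\macdonaldE_{n\lambda}(\xvec; \xi^d, 0) = \prod_{j\geq 1} \bigl( \elementaryE_j(x_1^{n/d}, x_2^{n/d}, \dotsc) \bigr)^{d\, m_j(\lambda')} = \elementaryE_{(d\lambda)'}\bigl(x_1^{n/d}, x_2^{n/d}, \dotsc\bigr),
\]
where the second equality uses that $(d\lambda)'$ has multiplicity $d\, m_j(\lambda')$ in the part $j$. Extracting $[\monomial_\nu]$ therefore gives $0$ unless $n/d$ divides every $\nu_i$, in which case the substitution $y_i = x_i^{n/d}$ turns the extraction into $[\monomial_{(d/n)\nu}] \elementaryE_{(d\lambda)'}(\xvec) = |\COF(d\lambda, (d/n)\nu)|$, which matches the fixed-point count above. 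The CSP follows.

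The main obstacle is the bookkeeping in the bijection between $\phi^d$-fixed fillings and smaller fillings: one must verify that the $d$ representative column sets per block have the correct sizes $\lambda'_k$ to define a coinversion-free filling of shape $d\lambda$ (by \cref{prop:columnSets}, not by literal restriction of the filling, since restricted columns may become non-adjacent in $d\lambda$), and that the row sums of the reduced filling really add up to $(d/n)\nu$. Both facts reduce to \cref{prop:columnSets} together with an elementary arithmetic count of ones per row. After that, the two computations above run in close parallel to the argument for \cref{thm:mainCSP}, the only difference being that we track a single monomial coefficient rather than the full principal specialization.
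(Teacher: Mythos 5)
Your proposal is correct, but it takes a genuinely different route from the paper. The paper disposes of this corollary in one line: it is read off from Rhoades' bi-cyclic sieving result (\cref{thm:rhoadesMatrices}) by freezing the row action (setting $q=1$ there, where $\delta_n(1,t)\equiv 1$) and observing that $\lambda_1$-fold rotation of columns whose content is the periodic rearrangement of $(n\lambda)'$ is the same action as rotating each block of $n$ consecutive equal-height columns. You instead bypass Rhoades entirely: you transport the statement to $\COF(n\lambda,\nu)$ via the column-set bijection of \cref{prop:columnSets}, enumerate $\phi^d$-fixed fillings directly (they biject with $\COF(d\lambda,(d/n)\nu)$, empty unless $n/d$ divides every $\nu_i$), and match this against $[\monomial_\nu]\macdonaldE_{n\lambda}(\xvec;\xi^d,0)$ computed from the unspecialized plethystic identity in the proof of \cref{thm:macdonaldEAtUnityRoots}, namely $\macdonaldE_{n\lambda}(\xvec;\xi^d,0)=\powerSum_{n/d}[\elementaryE_{(d\lambda)'}]$, whose $\monomial_\nu$-coefficient is exactly the $0$--$1$ matrix count $|\COF(d\lambda,(d/n)\nu)|$. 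I checked the two bookkeeping points you flag — the $d$ representative columns per block do have heights $(\lambda_1')^d,(\lambda_2')^d,\dotsc$, i.e.\ the column heights of $d\lambda$, and the row sums scale by $d/n$ — and both go through. What each approach buys: the paper's argument is short but leans on a substantial external theorem with a "messy" correction factor; yours is self-contained (modulo the Lascoux--Leclerc--Thibon evaluations already used for \cref{thm:macdonaldEAtUnityRoots}), exhibits the fixed-point sets explicitly (so it gives a Lyndon-like refinement of \cref{thm:refinedMacdonaldCSP} for free), and makes \cref{thm:refinedMacdonaldCSP} a direct consequence rather than something to be derived afterwards from the matrix model. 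What it does not recover is the bi-cyclic statement with the simultaneous row action, which Rhoades' theorem provides.
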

\begin{proof}
Note that there is an easy correspondence between $n$-fold rotation of columns 
and rotation of each block of $n$ consecutive columns.
\end{proof}

Recall the definition of $\phi$ in \cref{sec:csp1},
which cyclically shifts each block of $n$ consecutive columns.
\begin{theorem}\label{thm:refinedMacdonaldCSP}
Let $n \in \setN$, $\lambda \vdash k$ and 
let $\nu$ be a weak composition of $nk$ with $m$ parts.
Then 
\[
 \left(\COF(n \lambda, \nu ), \langle \phi \rangle, [\monomial_\nu]\macdonaldE_{n \lambda}(\xvec,q,0) \right)
\]
exhibits the cyclic sieving phenomenon.
\end{theorem}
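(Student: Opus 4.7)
The plan is to reduce the statement to the corollary already proved just above, via a natural bijection between coinversion-free fillings and binary matrices. The heavy lifting (showing the matrix-CSP on $X(\nu,n\lambda)$) is already done by the corollary of \cref{thm:rhoadesMatrices}, so the work remaining is to exhibit an equivariant bijection and to identify the CSP polynomials.

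First I would recall from \cref{prop:columnSets} that a coinversion-free filling of shape $n\lambda$ is uniquely determined by the collection of subsets of $[m]$ forming its columns. Given $F \in \COF(n\lambda,\nu)$, define an $m \times n\lambda_1$ binary matrix $M_F$ by $(M_F)_{i,j}=1$ exactly when $i$ appears in column $j$ of $F$. Since column $j$ of $F$ contains $(n\lambda)'_j$ distinct values, the $j^{\thsup}$ column of $M_F$ has sum $(n\lambda)'_j$; since the value $i$ appears exactly $\nu_i$ times in $F$, row $i$ of $M_F$ has sum $\nu_i$. Thus $F \mapsto M_F$ is a well-defined map $\COF(n\lambda,\nu) \to X(\nu,n\lambda)$, and \cref{prop:columnSets} shows it is a bijection: conversely, any such matrix specifies column-sets of the correct sizes, which then determine a unique coinversion-free filling.

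Next I would verify equivariance. By definition, $\phi$ rearranges the columns (as sets) of $F$ by cyclically rotating each consecutive block of $n$ columns one step to the right, and afterwards the column-set data determines the new coinversion-free filling. Under $F \mapsto M_F$ this is exactly the action on $X(\nu,n\lambda)$ described in the corollary, namely cyclic rotation within each block of $n$ consecutive columns. Therefore the bijection intertwines $\langle \phi\rangle$ with the $\setZ_n$-action on $X(\nu,n\lambda)$.

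Finally, I would identify the CSP polynomials. The corollary gives that
\[
\bigl(X(\nu,n\lambda),\,\setZ_n,\ \sum_{\mu \vdash nk} K_{\mu,\nu}(1)\,K_{\mu',(n\lambda)'}(q)\bigr)
\]
exhibits CSP. Comparing with \eqref{eq:macdonaldECoefficient},
\[
[\monomial_\nu]\macdonaldE_{n\lambda}(\xvec;q;0)=\sum_{\mu} K_{\mu\nu}(1)\,K_{\mu'(n\lambda)'}(q),
\]
so the two polynomials agree. Combining this with the equivariant bijection transports the CSP from $X(\nu,n\lambda)$ to $\COF(n\lambda,\nu)$. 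The only mild issue to be careful about is that Rhoades's polynomial in \cref{thm:rhoadesMatrices} carries a sign-fudge factor $\delta_n(q,t)$; in our application we only use the $t$-variable (i.e.\ just the column-action), and evaluating the row-action trivially amounts to setting $q=1$ in the row-statistic, a case where $\delta_n\equiv 1$ as already noted. With that checked, the theorem follows.
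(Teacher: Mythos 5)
Your proposal is correct and takes essentially the same route as the paper: the paper's proof also reduces the statement to the matrix CSP via the bijection $M(i,j)=1 \iff i$ appears in column $j$ of $F$, checks equivariance with the block-rotation action, and identifies the polynomial through \eqref{eq:macdonaldECoefficient}. Your extra care about the row/column sums of $M_F$ and about the factor $\delta_n(q,t)$ (handled in the paper by the remark that $\delta_n(q,t)\equiv 1$ when $q=1$) only makes explicit what the paper leaves as a straightforward verification.
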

\begin{proof}
Let $\varphi$ denote the one-step cyclic rotation of each block of $n$ consecutive columns in a matrix.
By \eqref{eq:macdonaldECoefficient} it suffices to show that
there is a bijection $A : X(\nu, n \lambda) \to \COF(n \lambda,\nu)$
such that $\phi\circ A(M) = A\circ \varphi(M)$.
We let $A(M) = F$ if and only if
\[
 M(i,j)=1 \qquad \iff \qquad \text{column $j$ in $F$ contains $i$}.
\]
It is straightforward to verify that $\phi\circ A(M) = A\circ \varphi(M)$,
so fixed-points under $\varphi^d$ are mapped to fixed-points under
$\phi^d$ for all $d \in \setZ$. This proves the theorem.
\end{proof}
See \cref{fig:macdonaldE-orbits} for an illustration of \cref{thm:refinedMacdonaldCSP}.

\ytableausetup{boxsize=0.8em}
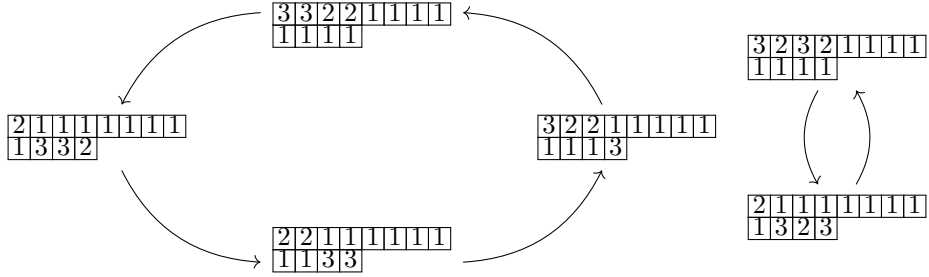
\begin{figure}[!ht]
	\begin{tikzcd}
		&  \ytableaushort{33221111,1111} \arrow[ld, bend right] \\
		\ytableaushort{21111111,1332} \arrow[rd, bend right] & &  \ytableaushort{32211111,1113} \arrow[lu, bend right]\\
		&  \ytableaushort{22111111,1133} \arrow[ru, bend right]
	\end{tikzcd}
	\begin{tikzcd}
		\ytableaushort{32321111,1111} \arrow[dd, bend right] \\  \\
		\ytableaushort{21111111,1323} \arrow[uu, bend right]
	\end{tikzcd}	
	
	\caption{The orbits of $\COF(84, (8,2,2))$ under $\phi$. 
	The CSP-polynomial is $f(q)=[\monomial_{(8,2,2)}]\macdonaldE_{84}(\xvec,q,0)=1+q+q^2+q^3+q^4+q^6$.
	One can easily check that for a primitive $4^\thsup$ root of unity $\xi$,
	$f(\xi)=0$, $f(\xi^2)=2$ and $f(\xi^4)=6$ and thus agreeing with the definition of CSP in \eqref{eq:cspDef}.
	}\label{fig:macdonaldE-orbits}
\end{figure}

\section{Skew specialized Macdonald polynomials}\label{sec:skew}

There is a natural generalization of $\macdonaldE_{\lambda}(\xvec;q,0)$ to skew diagrams.
In this section, we shall see that $\macdonaldE_{\lambda/\mu}(\xvec;q,0)$ is symmetric and Schur positive.
Interestingly, the coefficients in the Schur expansion are not 
related to skew Kostka--Foulkes polynomials which at first glance is a natural guess.

A \defin{skew specialized Macdonald filling of shape $\lambda/\mu$}
is a filling $F$ of the skew shape $\lambda/\mu$ such that
each column of $F$ contains distinct entries,
the first column is strictly decreasing, and
every triple in $F$ is an inversion-triple as in \eqref{eq:skewInversionTriple}.
We let $\COF(\lambda/\mu)$ denote the set of all such fillings. 
It is not difficult to see that \cref{prop:columnSets} can be generalized 
to the skew-case as well. 
In other words a skew specialized Macdonald filling is 
completely determined by the shape of the diagram and the ordered tuple of column sets. 
\begin{definition}
 Let $\lambda/\mu$ be a skew shape, and define the \defin{skew} (specialized) \defin{non-symmetric Macdonald polynomial} as
 \[
\macdonaldE_{\lambda/\mu}(\xvec;q,0) = \sum_{F \in \COF(\lambda/\mu)}\xvec^F q^{\maj(F)}.
 \]
\end{definition}
One can quite easily see that these polynomials generalize the skew Schur functions:
\begin{equation}\label{eq:skewMacdonaldEAtqZero}
\macdonaldE_{\lambda/\mu}(\xvec;0,0) = \schurS_{\lambda/\mu}(\xvec).
\end{equation}
As in the non-skew case, the functions $\macdonaldE_{\lambda/\mu}(\xvec;q,0)$ are actually symmetric,
and we are justified to work in any number of variables.
It is clear from the definition that 
\begin{equation}\label{eq:macdonaldEAsSkewMacdonaldH}
\macdonaldE_{\lambda/\mu}(\xvec;q,0) \coloneqq [t^*]\macdonaldH_{\lambda/\mu}(\xvec;q,t).
\end{equation}
The fact that these are symmetric follows from \cref{thm:skewEInSchurExpansion} below.
Symmetry was proved earlier in the non-skew case \cite{Uhlin2019} by using a 
variant of the \emph{Lascoux--Sch{\"{u}}tzenberger involutions},
see \cref{def:lsInv} below.

Given a filling $F \in \COF(\lambda/\mu)$ and some large integer $M$, 
we define the \defin{extended filling} $\hat{F}$
as the filling of shape $\lambda$ obtained from $F$ as
\begin{equation}\label{eq:extendedFilling}
\hat{F}(i,j) = 
\begin{cases}
 M- i \text{ if } (i,j)\in \mu \\
 F(i,j) \text{ otherwise}.
\end{cases}
\end{equation}
Note that $\hat{F}$ is a specialized Macdonald filling and that $\maj(F) = \maj(\hat{F})$. 
We shall make use of this definition in the next subsection.
\begin{figure}[ht!]
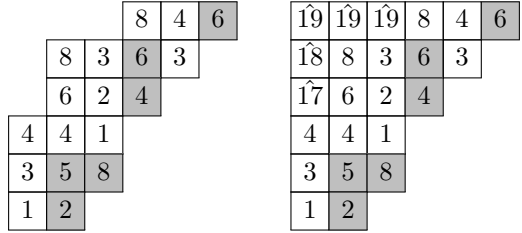

\[
\ytableausetup{boxsize=1.4em}
\ytableaushort{\none \none \none 84{*(lightgray)6},\none 83{*(lightgray)6}3,  \none 62{*(lightgray)4}, 
441,3{*(lightgray)5}{*(lightgray)8},1{*(lightgray)2}}
\qquad 
\ytableaushort{{\hat{19}}{\hat{19}}{\hat{19}} 84{*(lightgray)6},{\hat{18}} 83{*(lightgray)6}3, {\hat{17}} 62{*(lightgray)4}, 
441,3{*(lightgray)5}{*(lightgray)8},1{*(lightgray)2}}
\]
\caption{Left: A skew specialized Macdonald filling of shape $\lambda/\mu$ with $\lambda
=(6,5,4,3,3,2)$ and $\mu=(3,1,1)$ with descents marked. 
The weight of the filling is $(2,2,3,4,1,3,0,3)$.
Right: $\hat{F}$ with $M=20$.
}
\end{figure}

\subsection{Charge, RSK and Schur expansion}\label{ssec:schurExp}

We assume that the reader is familiar with the (row-insertion) Robinson--Schenstedt--Knuth correspondence, (RSK),
described briefly in \cref{sec:burgeRSK}.
See the appendix, \cref{sec:kostkaFoulkes}, for the definition of 
charge on words and semistandard Young tableaux.
\begin{definition}
Let $\mu$ be a partition and let $w$ be a word with content $\beta$ (which can be a weak composition)
such that $\mu+\beta$ is a partition.
The \defin{postfix charge} $\charge_{\mu}(w)$ is defined via the usual charge statistic as 
\[
 \charge_{\mu}(w) \coloneqq \charge\left( w \; \cdot \; \ell^{\mu_{\ell}} \dotsm 2^{\mu_{2}} 1^{\mu_{1}} \right).
\]
That is, we concatenate a postfix to $w$ with content $\mu$, where the letters appear in decreasing order.
For example, $\charge_{21}(12231233) = \charge(12231233\cdot 211) = 2$.
\end{definition}

Recall the definition of \defin{elementary Knuth transforms},
stating that $yzx \sim_K yxz $ whenever $x<y\leq z$
and $xzy \sim_K zxy$ whenever $x \leq y < z$.
Two words are \defin{Knuth-equivalent}, if one can obtain one from the other via a sequence
of elementary Knuth transforms.
If $w$ has partition-content, then its equivalence class contains a unique word 
which is the reading word of a semistandard Young tableau.
Moreover, two words of partition content, $w$ and $w'$ are Knuth-equivalent if 
and only if they insert to the same semistandard Young tableau under RSK.

\begin{lemma}\label{lem:postfixCharge}
Let $\mu$ be a partition and suppose $u$ and $v$ are Knuth-equivalent. 
Then 
 $u \cdot \; \ell^{\mu_{\ell}} \dotsm 2^{\mu_{2}} 1^{\mu_{1}}$
 and $v \cdot \; \ell^{\mu_{\ell}} \dotsm 2^{\mu_{2}} 1^{\mu_{1}}$ are 
 Knuth-equivalent and $\charge_\mu(u)=\charge_\mu(v)$.
\end{lemma}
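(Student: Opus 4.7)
The proof splits naturally into two claims. The first is that right-concatenation by a fixed word preserves Knuth equivalence, and the second is the charge identity.

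For the first claim, the plan is to reduce to a single elementary Knuth transform. Since Knuth equivalence is by definition generated by elementary transforms $yzx \sim_K yxz$ (with $x < y \leq z$) and $xzy \sim_K zxy$ (with $x \leq y < z$), it suffices to show that if $u \sim_K u'$ differ by one such transform, then $u \cdot w \sim_K u' \cdot w$ for any word $w$. But each elementary transform is a local rewrite on three consecutive letters, so the identical three letters still appear consecutively in $u \cdot w$ and may be rewritten in place. Setting $w = \ell^{\mu_\ell} \dotsm 2^{\mu_2} 1^{\mu_1}$ and iterating across the chain of elementary transforms linking $u$ to $v$ yields the Knuth equivalence of $u \cdot w$ and $v \cdot w$.

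For the second claim, I would invoke the standard fact (reviewed in \cref{sec:kostkaFoulkes}) that the charge statistic is invariant under Knuth equivalence on words of partition content. By hypothesis $\mu + \beta$ is a partition, where $\beta$ is the content of $u$ (and of $v$, since Knuth equivalence preserves content). The content of $u \cdot w$ is $\beta + \mu$, which is a partition, so charge is well-defined on both $u \cdot w$ and $v \cdot w$. Combined with the first claim, Knuth invariance of charge gives
\[
\charge_\mu(u) = \charge(u \cdot w) = \charge(v \cdot w) = \charge_\mu(v),
\]
which is the desired identity.

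The only nontrivial ingredient is the Knuth-invariance of charge, but this is a classical result of Lascoux--Sch\"utzenberger (recorded in the appendix), so no real obstacle arises. The concatenation step is purely syntactic. One subtlety worth flagging is that the statement requires $\mu + \beta$ to be a partition for $\charge_\mu$ to be defined in the first place; this is built into the definition of postfix charge, so the hypothesis carries over to $v$ automatically because $u$ and $v$ share the same content.
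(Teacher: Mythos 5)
Your proposal is correct and follows essentially the same route as the paper: the paper likewise observes that the concatenation statement follows directly from the definition of Knuth equivalence (elementary transforms being local rewrites), and then invokes the classical Knuth-invariance of charge (citing Butler, Cor.~2.4.38) to conclude. Your additional remark about the content $\beta+\mu$ being a partition so that charge is well-defined is a worthwhile clarification, but the argument is the same.
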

\begin{proof}
The first statement follows easily from the definition of Knuth-equivalence.
Furthermore, if two words are Knuth-equivalent, they have the same charge,
\cite[Cor. 2.4.38]{Butler1994}.
\end{proof}

Recall the notion of a Burge word from \Cref{def:burge word}.
\begin{definition}\label{def:biword}
For each skew specialized Macdonald filling $F$ we associate a \defin{Burge word} $W = W(F)$ as follows. For each entry  $e=F(i,j)$, let $\left(\begin{smallmatrix} j \\ e \end{smallmatrix}\right)$ be a biletter in $W$. Take $W$ to be the (unique) Burge word with all such biletters.
\end{definition}

The non-skew case of \cref{def:biword} was first given in \cite{AlexanderssonSawhney2017}.
Recall \cref{prop:columnSets} and note that we can easily recover the column sets of $F$ from $W$.
The map $W$ from fillings in $\COF(\lambda/\mu,m)$ to biwords
where the top row is a weakly increasing sequence with $j$ entries equal to $\lambda'_j-\mu'_j$,
and bottom row being elements in $[m]$, strictly decreasing on each block of identical elements in the top row,
is therefore a bijection.
Note that $W$ is a bijection when we fix some shape $\lambda/\mu$.
However, two skew specialized Macdonald fillings of \emph{different shapes} may give the same biword.
For example,
\[
\ytableaushort{32,11} \qquad \text{and} \qquad \ytableaushort{\none 2,31,1} 
\]
both have the same biword $\left(\begin{smallmatrix} 1 & 1 & 2 & 2 \\ 3 & 1 & 2 & 1 \end{smallmatrix}\right)$.

%
Note that the content of $F$ is equal to the content of $P$ and is also given by the bottom row of $W$ while the content of $Q$ is given by the top row of $W$.

\begin{figure}[!ht]
\ytableausetup{boxsize=0.9em}
\begin{align*}
\ytableaushort{\none \none 213, 331,
	22,14} \xlongrightarrow{W}
\setlength\arraycolsep{2pt}
\begin{pmatrix}
	1 & 1 & 1 & 2 & 2 & 2 & 3 & 3 & 3 & 4\\
	4 & 3 & 1 & 3 & 2 & 1 & 5 & 2 & 1 & 2
\end{pmatrix} 
\xlongrightarrow{RSK}
\left(
\ytableaushort{1112,225,33,4}\; ,\quad
\ytableaushort{1234,123,13,2}
\right)
\end{align*}
\caption{
A skew specialized Macdonald filling $F$, the corresponding biword,
and the result $(P,Q) = (\ins(F),\rec(F))$ under RSK.}
\label{fig:biword correspondence}
\end{figure}

\begin{proposition}\label{prop:majAspostfixCharge}
Suppose $\lambda/\mu$  is a skew shape and $F \in \COF(\lambda/\mu)$.
Then $\maj(F) = \charge_{\mu'}(\cw(F))$.
\end{proposition}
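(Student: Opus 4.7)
The plan is to reduce the claim to the non-skew case $\mu = \emptyset$ via the extended filling $\hat{F}$ from \eqref{eq:extendedFilling}, and then convert the charge of the resulting word into the postfix charge of $\cw(F)$. I proceed in three steps: first verify $\maj(F) = \maj(\hat{F})$; second, invoke the non-skew identity $\maj(G) = \charge(\cw(G))$ applied to $G = \hat{F}$; third, identify $\charge(\cw(\hat{F}))$ with $\charge_{\mu'}(\cw(F))$.

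The first step is essentially a check: inside $\mu$ the extension values $\hat{F}(i,j) = M - i$ depend only on the row, so no horizontally adjacent pair of cells in $\mu$ is a descent, and at each boundary cell $(i, j) \notin \mu$ with $(i, j-1) \in \mu$, taking $M$ sufficiently large forces $\hat{F}(i, j-1) > F(i, j)$, so no boundary descent is introduced. The legs of boxes in $\lambda/\mu$ coincide with their legs in $\lambda$, hence the weighted descent sums agree. For the second step I use that $\cw(G)$ is Knuth-equivalent to $\rw(P(G))$ under the RSK correspondence of \cref{prop:RSK}, so the two share a charge; combining this with \cref{prop:EasHL} and the charge-theoretic formula for the Kostka-Foulkes polynomials recalled in \cref{sec:kostkaFoulkes} intertwines $\charge(\cw(G))$ with $\maj(G)$, and substituting $G = \hat{F}$ yields $\maj(F) = \charge(\cw(\hat{F}))$.

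The third step is the main obstacle. I would factor $\cw(\hat{F}) = B_1 C_1 B_2 C_2 \cdots B_k C_k$, where $B_j = (M-1)(M-2)\cdots(M - \mu'_j)$ collects the large entries in column $j$ of $\hat{F}$ and $C_j$ collects the $F$-entries of column $j$ in decreasing order, while the target $\cw(F) \cdot w_0 = C_1 \cdots C_k \cdot w_0$ with $w_0 = k^{\mu'_k} \cdots 1^{\mu'_1}$ instead packs all auxiliary data at the tail. Since every letter of every $B_j$ dominates every letter of every $C_j$, each large letter acts as the maximum of any three-letter window it occupies, so Knuth transforms slide the $B$-letters rightward, producing a Knuth-equivalent form $C_1 \cdots C_k \cdot \tilde{w}_0$. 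The hard part will be showing that $\tilde{w}_0$ (living in the large alphabet, with content $\mu$) and $w_0$ (in the small alphabet, with content $\mu'$) contribute equally to charge despite living in different alphabets with conjugate contents. A clean route is induction on $|\mu|$: adding a single corner cell at $(r, c)$ of $\mu$ adjoins exactly one large letter $M - r$ into $B_c$ and exactly one small letter $c$ into $w_0$, and one then checks from the standard-subword definition of charge that both sides increase by the same amount; the verification reduces to a local computation at the new corner. Alternatively, one can argue at the level of RSK insertion tableaux, showing that the two inserted tableaux agree up to the natural alphabet relabeling and then invoking Knuth-invariance of charge.
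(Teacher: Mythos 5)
Your outline (pass to the extended filling $\hat F$, invoke the non-skew identity, then absorb the cells of $\mu$ into a postfix) is the same as the paper's, and your step 1 is fine, but the argument breaks in two places. The main one is your reading of the charge word. You take $\cw(\hat F)$ to be the sequence of \emph{entry values} read column by column, giving the interleaved word $B_1C_1\dotsm B_kC_k$. Under that reading the base identity $\maj(G)=\charge(\cw(G))$ is already false for non-skew fillings: for $\lambda=(2,1)$ and the filling with $F(1,1)=3$, $F(1,2)=2$, $F(2,1)=1$ one has $\maj(F)=0$ while the value word $3,1,2$ has charge $2$; summing over the three content-$(1,1,1)$ fillings gives $2+q$ for $\maj$ but $1+q+q^2$ for this charge, so no pointwise reinterpretation can rescue the convention. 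The identity the proposition asserts uses the \emph{other} row of the Burge word: $\cw(F)$ records the \emph{column indices}, listed in order of increasing entry value with ties broken by decreasing column, so that its content is $\alpha_i=\lambda_i'-\mu_i'$ --- this is what makes the sum over $\SSYT(\nu,\alpha)$ in \cref{thm:skewEInSchurExpansion} come out right, and it is the convention visible in \cref{fig:crystalBiword}, even though \cref{def:biword} as printed reads the way you read it. With the column-index convention your ``hard part'' evaporates: the cells of $\mu$ carry the \emph{largest} values $M-i$ in $\hat F$, so their biletters sort to the very end of the Burge word and $\cw(\hat F)=\cw(F)\cdot u$, where $u=(\mu_\ell\dotsm 21)(\mu_{\ell-1}\dotsm 21)\dotsm(\mu_1\dotsm 21)$ is a word of content $\mu'$ that row-inserts to the superstandard tableau of shape $\mu'$ and is hence Knuth-equivalent to the postfix $\ell^{\mu'_\ell}\dotsm 1^{\mu'_1}$; Knuth-invariance of charge then gives $\charge(\cw(\hat F))=\charge_{\mu'}(\cw(F))$. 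There is no interleaving to undo and no comparison between conjugate contents in different alphabets.

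The second gap is your step 2. The identity $\macdonaldE_\lambda(\xvec;q,0)=\omega\hallLittlewoodT_{\lambda'}(\xvec;q)$ together with the charge formula for $K_{\lambda\mu}(q)$ only identifies two generating functions; it cannot yield the filling-by-filling statement $\maj(G)=\charge(\cw(G))$ that you need in order to substitute $G=\hat F$. That pointwise identity is a separate bijective result, which the paper imports from the non-skew literature rather than deriving from \cref{prop:EasHL}. Finally, even granting your conventions, your step 3 is a plan rather than a proof: the induction on $|\mu|$ and the ``local computation at the new corner'' are never carried out, and since charge is computed from standard subwords that wrap around the entire word, there is no a priori reason for the effect of adjoining one cell of $\mu$ to be local.
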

\begin{proof}
A short proof in the case $\mu=\emptyset$ was given in \cite{AlexanderssonSawhney2019}.
Let $F \in \COF(\lambda/\mu)$ and let $\hat{F}$ be the extended filling of $F$ as in \eqref{eq:extendedFilling}. 
Recall that $\maj(F) = \maj(\hat{F})$. It follows from the definition of the biword that
$
 \cw(\hat{F}) = \cw(F) \cdot \ell^{\mu'_{\ell}} \dotsm 2^{\mu'_{2}} 1^{\mu'_{1}}.
$
The case $\mu=\emptyset$ and \cref{lem:postfixCharge} imply that
$\maj(F) = \maj(\hat{F}) = \charge( \cw(\hat{F}) ) = \charge_{\mu'}(\cw(F))$.
\end{proof}
The Robinson--Schenstedt--Knuth correspondence has the essential property that if the word $w$ inserts to $P$ under RSK, 
then $\charge(w) = \charge(P)$. 
From this property it follows that RSK provides a bijection
\begin{equation}\label{eq:RSKBij}
 \COF(\lambda/\mu) \xleftrightarrow{RSK} \bigcup_{\nu \vdash |\lambda/\mu|} 
 \SSYT(\nu,\alpha) \times \SSYT(\nu'), \quad \alpha_i \coloneqq \lambda'_i - \mu'_i, 
\end{equation}
such that if $F \xleftrightarrow{RSK} (P,Q)$ then 
$\maj(F) = \charge_{\mu'}(\cw(F)) = \charge_{\mu'}(P)$.

We now have the setup needed to prove the following theorem.
\begin{theorem}[The Schur expansion of skew specialized Macdonald polynomials]\label{thm:skewEInSchurExpansion}
Let $\lambda/\mu$ be a skew shape and let $\alpha$ be the 
weak composition given by $\alpha_i \coloneqq \lambda'_i-\mu'_i$.
Then
\begin{equation}\label{eq:skewEInSchurExpansion}
 \macdonaldE_{\lambda/\mu}(\xvec;q,0) \coloneqq 
 \sum_{\nu \vdash |\lambda/\mu|}  \schurS_{\nu'}(\xvec) \sum_{T \in \SSYT(\nu,\alpha)}  q^{\charge_{\mu'}(T)}. 
\end{equation}
\end{theorem}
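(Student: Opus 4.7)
The plan is a direct deduction from the RSK-type bijection \eqref{eq:RSKBij} and the charge identity $\maj(F) = \charge_{\mu'}(P)$ already recorded just above the theorem statement; no further machinery is needed.

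First, I would begin from the combinatorial definition
\[
\macdonaldE_{\lambda/\mu}(\xvec;q,0) \;=\; \sum_{F \in \COF(\lambda/\mu)} \xvec^F\, q^{\maj(F)}
\]
and apply the bijection \eqref{eq:RSKBij} to reindex this sum. Each $F$ is sent to a pair $(P,Q)$ with $P \in \SSYT(\nu,\alpha)$ and $Q \in \SSYT(\nu')$ for some $\nu \vdash |\lambda/\mu|$; the recording tableau $Q$ carries the content of $F$, so $\xvec^F = \xvec^Q$, while the statistic $\maj(F)$ becomes $\charge_{\mu'}(P)$. After substitution the defining sum reads
\[
\sum_{\nu \vdash |\lambda/\mu|} \sum_{P \in \SSYT(\nu,\alpha)} \sum_{Q \in \SSYT(\nu')} \xvec^Q\, q^{\charge_{\mu'}(P)}.
\]

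Next, since the summand visibly factors as a function of $P$ (only through $q$) times a function of $Q$ (only through $\xvec$), the triple sum decouples as
\[
\sum_{\nu \vdash |\lambda/\mu|} \left( \sum_{T \in \SSYT(\nu,\alpha)} q^{\charge_{\mu'}(T)} \right) \left( \sum_{Q \in \SSYT(\nu')} \xvec^Q \right).
\]
Recognizing the second inner sum as $\schurS_{\nu'}(\xvec)$ by definition of the Schur function yields exactly \eqref{eq:skewEInSchurExpansion}.

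The substantive work has already been done in the preceding subsection, where the bijection \eqref{eq:RSKBij} and the identity $\maj(F) = \charge_{\mu'}(P)$ were set up; these rest on the Knuth-equivalence of $\cw(F)$ with $\rw(P)$ under (Burge) RSK (so that their charges coincide) combined with \cref{lem:postfixCharge} applied via the extended filling $\hat F$ to accommodate the $\mu'$-postfix. The main conceptual obstacle was thus arranging the Burge RSK output so that the $\alpha$-content sits on the insertion side while the $\xvec$-weight sits on the recording side; once this is in place, the factorization above immediately yields the theorem, and as an immediate byproduct $\macdonaldE_{\lambda/\mu}(\xvec;q,0)$ is symmetric and Schur positive.
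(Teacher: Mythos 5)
Your proof is correct and follows essentially the same route as the paper's: rewrite $\maj(F)$ as a postfix charge via \cref{prop:majAspostfixCharge}, transport the sum through the Burge-word RSK bijection \eqref{eq:RSKBij}, and factor the resulting double sum into the charge generating function times $\schurS_{\nu'}(\xvec)$. Your assignment of the statistics --- the content-$\alpha$ insertion tableau carrying $\charge_{\mu'}$ and the recording side carrying the $\xvec$-weight --- is the mathematically correct one (consistent with \cref{fig:biword correspondence}), so nothing further is needed.
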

\begin{proof}
By definition, $\macdonaldE_{\lambda/\mu}(\xvec;q,0)$ is equal to $\sum_{F \in \COF(\lambda/\mu)} \xvec^F q^{\maj(F)}$,
which is equal to $\sum_{F \in \COF(\lambda/\mu)} \xvec^F q^{\charge_{\mu'}(\cw(F))}$
by using \cref{prop:majAspostfixCharge}.
Applying the RSK bijection in \eqref{eq:RSKBij}, we then have that 
\begin{align*}
 \macdonaldE_{\lambda/\mu}(\xvec;q,0) &=  
 \sum_{\nu}  \big( \sum_{P \in \SSYT(\nu',\alpha)}  q^{\charge_{\mu'}(P)} \big) \; \schurS_{\nu'}(\xvec),
\end{align*}
which is exactly the statement in \eqref{eq:skewEInSchurExpansion}.
\end{proof}


\begin{table}[!ht]
\[
 \begin{array}{ll}
  \toprule
  \text{Shape $\lambda/\mu$\hspace{3cm}} & \text{Schur expansion of } \macdonaldE_{\lambda/\mu}(\xvec;q,0) \\
  \midrule
  1		&	\schurS_1 \\
  \midrule
  2		&	\schurS_{2} + q\schurS_{11} \\
  11	&	\schurS_{11} \\
  21/1	&	\schurS_{2} + \schurS_{11} \\
  \midrule
  3		&	\schurS_3 + (q+q^2)\schurS_{21} + q^3 \schurS_{111} \\
  21	&	\schurS_{21} + q\schurS_{111} \\
  111	&	\schurS_{111} \\
  22/1	&	\schurS_{21} + q\schurS_{111} \\
  31/1	&	\schurS_3 + (1+q)\schurS_{21} + q \schurS_{111} \\
  211/1	&	\schurS_{21} + \schurS_{111} \\
  321/21&	\schurS_3 + 2\schurS_{21} + \schurS_{111} \\
  \bottomrule
 \end{array}
\]
\caption{Here are the Schur expansions of 
$\macdonaldE_{\lambda/\mu}(\xvec;q,0)$ in the cases $|\lambda|-|\mu| \leq 3$.
}\label{tab:skewESchurExp}
\end{table}

Recall that the Littlewood--Richardson coefficients $c_{\mu\nu}^{\lambda}$ are defined via the relation
$\schurS_\mu \schurS_\nu = \sum_{\lambda} c_{\mu\nu}^{\lambda} \schurS_\lambda$,
and that the skew Schur functions expand as $\schurS_{\lambda/\mu} = \sum_{\nu} c_{\mu\nu}^{\lambda} \schurS_\nu$.
\begin{corollary}[A Littlewood--Richardson rule]
Let $\lambda/\mu$ be a skew shape and let $\alpha$ be the 
weak composition $\alpha_i \coloneqq	 \lambda'_i-\mu'_i$.
Let
\begin{equation}
K_{\lambda/\mu}^{\nu}(q) \coloneqq \sum_{T \in \SSYT(\nu,\alpha)}  q^{\charge_{\mu'}(T)}.
\end{equation}
Then 
\begin{enumerate}[(a)]
 \item 
 $K_{\lambda/\mu}^{\nu}(0) = c_{\mu{\nu'}}^{\lambda}$, a Littlewood--Richardson coefficient,
 \item 
 $K_{\lambda/\mu}^{\nu}(1) = K_{\nu\alpha}$, a Kostka coefficient,
 \item The product
 \[
  \macdonaldE_\lambda(\xvec;q,0)
  \macdonaldE_\mu(\xvec;q,0)
  =
  \sum_{\nu} K_{\kappa/c^r}^{\nu}(q)   \schurS_{\nu'}(\xvec)
 \]
where $c=\mu_1$, $r=\length(\lambda)$, and 
$\kappa = (c+\lambda_1,\dotsc,c+\lambda_{\length(\lambda)},\mu_1,\mu_2,\dotsc,\mu_{\length(\mu)})$.
\end{enumerate}
\end{corollary}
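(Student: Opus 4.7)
The plan is to handle parts (a) and (b) by direct specialization of \cref{thm:skewEInSchurExpansion}, and to reduce part (c) to the factorization $\macdonaldE_{\kappa/c^r}(\xvec;q,0) = \macdonaldE_{\lambda}(\xvec;q,0)\cdot \macdonaldE_{\mu}(\xvec;q,0)$.

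For (a), specializing \cref{thm:skewEInSchurExpansion} at $q=0$ and using \eqref{eq:skewMacdonaldEAtqZero} gives
\[
 \schurS_{\lambda/\mu}(\xvec) = \macdonaldE_{\lambda/\mu}(\xvec;0,0) = \sum_{\nu} K_{\lambda/\mu}^{\nu}(0)\, \schurS_{\nu'}(\xvec),
\]
and comparing with the standard expansion $\schurS_{\lambda/\mu} = \sum_\rho c_{\mu\rho}^\lambda \schurS_\rho$ (via the substitution $\rho = \nu'$) yields (a) by linear independence of the Schur basis. Part (b) is immediate: at $q=1$ the polynomial $K_{\lambda/\mu}^{\nu}(q)$ collapses to $\#\SSYT(\nu,\alpha) = K_{\nu\alpha}$.

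For (c), once the factorization claim above is known, applying \cref{thm:skewEInSchurExpansion} to $\macdonaldE_{\kappa/c^r}$ gives the stated product identity directly, since a short computation shows that the composition of column-differences $\alpha_i = \kappa'_i-(c^r)'_i$ is the concatenation $(\mu'_1,\dotsc,\mu'_c,\lambda'_1,\lambda'_2,\dotsc)$, matching the product of contents on the right-hand side. To prove the factorization, I observe that since $\mu_1 = c$, the skew diagram $\kappa/c^r$ splits into two cell-disjoint blocks that share neither a row nor a column: a copy of $\lambda$ in rows $1,\dotsc,r$ and columns $c+1,c+2,\dotsc$, and a copy of $\mu$ in rows $r+1,\dotsc,r+\length(\mu)$ and columns $1,\dotsc,c$. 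I then exhibit the restriction map $F \mapsto (F|_\lambda, F|_\mu)$ as a weight- and $\maj$-preserving bijection $\COF(\kappa/c^r) \to \COF(\lambda)\times\COF(\mu)$.

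The main obstacle is confirming that the inversion-triple and first-column conditions of $\kappa/c^r$ decompose cleanly into those for $\lambda$ and $\mu$. Because descents and legs are row-local and the two blocks share no row, the statistics $\maj(F)$ and $\xvec^F$ factor at once. For any triple $(a,b,c)$ in $\kappa/c^r$, the cells $b$ and $c$ lie in a common column, hence in a single block. The delicate case is when $b$ sits in column $c+1$ of the $\lambda$-block: then $a=(i,c)$ with $i\leq r$ belongs to the removed rectangle $c^r$, and the $\infty$-convention from \eqref{eq:skewInversionTriple} forces the entry at $b$ to exceed the entry at $c$, which is exactly the strict-decrease condition for the first column of the embedded $\lambda$. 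Meanwhile the literal first column of $\kappa/c^r$ is the first column of the $\mu$-block, whose strict-decrease condition coincides with that for $\mu$. Once this boundary bookkeeping is done, the factorization, and hence (c), follows.
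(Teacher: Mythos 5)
Your proposal is correct and follows the same route as the paper: parts (a) and (b) by specializing \cref{thm:skewEInSchurExpansion} at $q=0$ (via \eqref{eq:skewMacdonaldEAtqZero}) and $q=1$, and part (c) by realizing the product as the single skew polynomial $\macdonaldE_{\kappa/c^r}(\xvec;q,0)$ whose diagram splits into row- and column-disjoint copies of $\lambda$ and $\mu$. The paper leaves the factorization at the level of the picture in \eqref{eq:prodAsSkew}; your verification that the $\infty$-triples along column $c+1$ reproduce the first-column condition for the embedded $\lambda$, and that descents and legs decompose block-by-block, is exactly the bookkeeping the paper omits.
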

\begin{proof}
The first and second identity follows 
from \cref{thm:skewEInSchurExpansion} and \eqref{eq:skewMacdonaldEAtqZero}.
The third identity follows from the observation that the product in the left hand 
side can be realized as a single skew specialized Macdonald polynomial,
$\macdonaldE_{(\lambda+c,\mu)/c^r }(\xvec;q,0)$, see the diagram in \eqref{eq:prodAsSkew}.
\begin{equation}\label{eq:prodAsSkew}
\begin{ytableau}
*(lightgray) & *(lightgray) & *(lightgray) & *(lightgray) &  & & & \\
*(lightgray) & *(lightgray) & *(lightgray) & *(lightgray) &  & & \none & \none & \none[\lambda]\\
*(lightgray) & *(lightgray) & *(lightgray) & *(lightgray) & & \\
 & & & \\
 & &  \\
 &  \none & \none & \none[\mu]
\end{ytableau}
\end{equation}
\end{proof}
The coefficients $K_{\lambda/\mu}^{\nu}(q)$ might be related 
to the \emph{parabolic Kostka polynomials}, whose constant 
terms are also Littlewood--Richardson coefficients, 
see \cite{ShimozonoWeyman2000,KirillovSchillingShimozono2001} for details.

We end this subsection with proving an additonal property of postfix-charge.
Recall that the charge statistic is \defin{Mahonian}, meaning that
$\sum_{\sigma \in \symS_n} q^{\charge(\sigma)} = [n]_q!$.
There is a natural generalization of this identity for $\charge_{\mu}(\cdot)$.
\begin{proposition}
Let $\mu \vdash m$ and $n\geq 0$. Then
\[
 \sum_{\sigma \in \symS_n} q^{\charge_{\mu}(\sigma)} = n! \prod_{i \geq 1}
 \frac{[\lambda_i-\mu_i]_q!}{(\lambda_i-\mu_i)!}
\]
where $\lambda' = (\mu'_1+1,\mu'_2+1,\dotsc,\mu'_n+1,\mu_{n+1},\dotsc)$.
\end{proposition}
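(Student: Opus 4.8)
The plan is to recognise the left-hand side as a single squarefree monomial coefficient of a skew specialized Macdonald polynomial attached to a horizontal strip, and then to factor that polynomial over its rows. Let $\lambda$ be the partition with $\lambda' = \mu + (1^n)$, so that $\lambda/\mu'$ is a horizontal strip of size $n$ whose row lengths are $b_r \coloneqq \lambda_r - \mu'_r$ (equivalently $b_r = \#\{\,i\le n : \mu_i = r-1\,\}$, since the conjugate $(\lambda/\mu')' = (\mu+(1^n))/\mu$ is a vertical strip with one box at the end of each of the first $n$ rows of $\mu$). First I would check that, because a horizontal strip has at most one box per column, the coinversion-free requirements are vacuous: distinct column entries and a strictly decreasing first column are automatic, and no triple of \eqref{eq:skewInversionTriple} exists because no box sits strictly below another in the same column. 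Hence every bijective filling of the $n$ boxes by $1,\dots,n$ lies in $\COF(\lambda/\mu')$, and reading the single entry of each occupied column from left to right gives a bijection between the content-$(1^n)$ fillings and $\symS_n$ under which $\cw(F)$ is exactly the corresponding permutation. Since the lower shape is $\mu'$, \cref{prop:majAspostfixCharge} gives $\maj(F)=\charge_{(\mu')'}(\cw(F))=\charge_\mu(\cw(F))$, so
\[
 \sum_{\sigma\in\symS_n} q^{\charge_\mu(\sigma)}
 = \sum_{\substack{F\in\COF(\lambda/\mu')\\ \weight(F)=(1^n)}} q^{\maj(F)}
 = [x_1x_2\cdots x_n]\,\macdonaldE_{\lambda/\mu'}(\xvec;q,0).
\]
(The same reduction also follows from \cref{thm:skewEInSchurExpansion} with $\alpha=(1^n)$, pairing with $\powerSum_1^n$.)

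Next I would factor the polynomial. As $\lambda/\mu'$ is a horizontal strip, distinct rows occupy disjoint sets of columns, so a filling is an independent choice in each row and no triple straddles two rows. Moreover $\maj$ is additive over descents weighted by $\leg(b)+1$, and inside a fixed row of length $b_r$ the possible leg values $b_r-1,\dots,1,0$ and the internal descents coincide with those of a stand-alone row; the leftmost box of a skew row is never a descent because in the extended filling $\hat F$ of \eqref{eq:extendedFilling} its left neighbour lies in $\mu'$ and is large. Therefore
\[
 \macdonaldE_{\lambda/\mu'}(\xvec;q,0) = \prod_{r\ge 1}\macdonaldE_{(b_r)}(\xvec;q,0).
\]
Taking the coefficient of the squarefree monomial of degree $n=\sum_r b_r$ distributes the $n$ variables among the factors, so by symmetry of each factor
\[
 [x_1\cdots x_n]\prod_{r}\macdonaldE_{(b_r)}
 = \frac{n!}{\prod_r b_r!}\prod_{r}[x_1\cdots x_{b_r}]\,\macdonaldE_{(b_r)}(\xvec;q,0)
 = \frac{n!}{\prod_r b_r!}\prod_{r}[b_r]_q!.
\]
The last equality is the base case $\mu=\emptyset$ of the statement, namely $[x_1\cdots x_k]\macdonaldE_{(k)}(\xvec;q,0)=\sum_{\sigma\in\symS_k} q^{\charge(\sigma)}=[k]_q!$, which is \cref{prop:majAspostfixCharge} together with the Mahonian property of charge. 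Rewriting the product as $n!\prod_r [b_r]_q!/b_r!$ gives the right-hand side of the statement.

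The hard part will be the first paragraph: one must verify carefully that the coinversion-free conditions really impose nothing on a horizontal strip, so that the content-$(1^n)$ fillings are in weight-preserving bijection with all of $\symS_n$ via the charge word, and that \cref{prop:majAspostfixCharge} applies verbatim to this skew shape. Everything afterwards is bookkeeping: the factorization is forced by rows occupying disjoint columns together with additivity of $\maj$, and the final coefficient extraction is the standard rule for squarefree coefficients of a product of homogeneous symmetric functions. I would also keep careful track of the conjugation conventions, since the quantities $b_r=\lambda_r-\mu'_r$ controlling the answer are governed by \cref{prop:majAspostfixCharge} and the passage between $\mu$ and $\mu'$.
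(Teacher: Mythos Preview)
Your approach is exactly the paper's: realise the sum as $[x_1\cdots x_n]\macdonaldE$ of a horizontal-strip skew shape via \cref{prop:majAspostfixCharge}, then factor over rows and use the Mahonian property. The paper works with the shape $\lambda/\mu$ (where $\lambda$ is as in the statement, so $\lambda'=\mu'+(1^n)$), observes it has one box per column, and reads off the row lengths $\lambda_i-\mu_i$ directly.

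There is, however, a bookkeeping slip in your write-up. You silently redefine $\lambda$ by $\lambda'=\mu+(1^n)$, whereas the statement fixes $\lambda'=\mu'+(1^n)$; these are different partitions. With your $\lambda$ and inner shape $\mu'$, the row lengths are $b_r=\#\{i\le n:\mu_i=r-1\}$, while the stated right-hand side involves $\lambda_i-\mu_i=\#\{j\le n:\mu'_j=i-1\}$ for the \emph{paper's} $\lambda$. These multisets do not coincide in general (try $\mu=(5,3,1,1,1)$, $n=8$: you get $\{3,3,1,1\}$ versus $\{3,2,2,1\}$), so your final sentence ``gives the right-hand side of the statement'' does not follow. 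In fact this discrepancy exposes an inconsistency in the statement itself: applying \cref{prop:majAspostfixCharge} to $\COF(\lambda/\mu)$ with the paper's $\lambda$ produces $\charge_{\mu'}$, not $\charge_\mu$. Your choice of shape $\lambda/\mu'$ is the right fix to obtain $\charge_\mu$, but then the product should be over your $b_r$'s, not over the stated $\lambda_i-\mu_i$. Either keep the paper's $\lambda$ and replace $\charge_\mu$ by $\charge_{\mu'}$, or keep $\charge_\mu$ and adjust the right-hand side accordingly; as written the two sides cannot both be matched.
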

\begin{proof}
First note that the shape $\lambda/\mu$ has exactly one box in each of the first $n$ columns.
For example, $\mu=53111$ and $n=8$ gives the following skew shape $\lambda/\mu$:
\[
\begin{ytableau}
*(lightgray)&*(lightgray)&*(lightgray)&*(lightgray)&*(lightgray)& \; & \; & \; \\
*(lightgray)&*(lightgray)&*(lightgray)& \; & \; \\
*(lightgray)& \; & \; \\
*(lightgray) \\
 \; 
\end{ytableau}
\]
We use \cref{prop:majAspostfixCharge} together with the fact that 
every permutation appear as charge word appears exactly one when summing over
fillings with weight $1^n$. Hence,
 \begin{equation*}
  [x_1x_2\dotsm x_{n}] \macdonaldE_{\lambda/\mu}(\xvec;q) =  \sum_{\sigma \in \symS_n} q^{\charge_{\mu}(\sigma)}.
 \end{equation*}
 But the left hand side is easy to compute directly since the rows of $\lambda/\mu$ 
 occupy disjoint columns; there are $n!/((\lambda_1-\mu_1)\dotsm (\lambda_\ell-\mu_\ell))$
 ways to distribute $\{1,2,\dotsc,n\}$ in the rows of $\lambda/\mu$.
 Furthermore, in row $i$, the major index gives the Mahonian distribution $[\lambda_i-\mu_i]_q!$
 when summing over all permutations of the entries. 
 This implies the formula.
 \end{proof}

\subsection{CSP on skew specialized Macdonald polynomials}

We can generalize \cref{thm:mainCSP} to the skew setting.
We let $\phi$ act on $\COF(n \lambda / n\mu,m)$ as before, 
by cyclically shifting each block of $n$ consecutive columns one step to the right,
followed by rearranging the entries in each column such 
that a (unique) specialized Macdonald filling is obtained.
Again, $\langle \phi \rangle$ is a cyclic group of order $n$.
\begin{theorem}\label{thm:mainCSPSkew}
For every skew shape $\lambda/\mu$ and $n\geq 1$, the triple 
\[
 \left(\COF(n \lambda / n\mu,m), \langle \phi \rangle, \macdonaldE_{n \lambda / n\mu}(1^m,q,0) \right)
\]
exhibits the cyclic sieving phenomenon. 
Furthermore, the family 
\[
\{ \left(\COF(n \lambda / n\mu,m), \langle \phi \rangle, 
\macdonaldE_{n \lambda / n\mu}(1^m,q,0) \right) \}_{n=1}^\infty
\]
is Lyndon-like, as described in \cref{def:LyndonLike}.
\end{theorem}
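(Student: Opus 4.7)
The proof closely parallels that of \cref{thm:mainCSP}. The first step is to count the fillings in $\COF(n\lambda/n\mu, m)$ that are fixed by $\phi^d$, for $d \mid n$. As noted after the definition of $\COF(\lambda/\mu)$, the skew analogue of \cref{prop:columnSets} holds: a skew specialized Macdonald filling is uniquely determined by its ordered tuple of column sets. Since $\phi$ cyclically rotates each block of $n$ consecutive columns one step, a filling $F \in \COF(n\lambda/n\mu, m)$ is fixed by $\phi^d$ if and only if the column sets are $d$-periodic within each block. Such fillings correspond bijectively to the elements of $\COF(d\lambda/d\mu, m)$; since each of the $d$ independent columns in block $k$ may be chosen freely as a subset of $[m]$ of size $\lambda'_k - \mu'_k$, we obtain
\[
\#\{F \in \COF(n\lambda/n\mu, m) : \phi^d F = F\} = \#\COF(d\lambda/d\mu, m) = \prod_{k\geq 1} \binom{m}{\lambda'_k - \mu'_k}^{d}.
\]

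The main obstacle is the second step, namely the skew analogue of \cref{thm:macdonaldEAtUnityRoots}:
\[
\macdonaldE_{n\lambda/n\mu}(1^m; \xi^d, 0) = \prod_{k \geq 1} \binom{m}{\lambda'_k - \mu'_k}^{d}
\]
for $\xi$ a primitive $n^\thsup$ root of unity and each divisor $d \mid n$. The plan is to attack this through the Schur expansion from \cref{thm:skewEInSchurExpansion},
\[
\macdonaldE_{n\lambda/n\mu}(1^m; q, 0) = \sum_\nu \schurS_{\nu'}(1^m) \sum_{T \in \SSYT(\nu, \alpha)} q^{\charge_{(n\mu)'}(T)},
\]
where $\alpha_i = (n\lambda)'_i - (n\mu)'_i$ consists of blocks of $n$ equal parts $\lambda'_k - \mu'_k$. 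The technical heart of the argument is to show that the root-of-unity evaluation of the postfix-charge generating polynomials, combined with the principal specialization of the Schur functions, telescopes to the claimed product. One natural route is to extend the Lascoux--Leclerc--Thibon identities (\cref{prop:hlUnityRoot,prop:hlRectUnityRoot}) to a skew or postfix-charge setting, recovering a plethystic factorization; an alternative is to adapt Rhoades' cyclic-sieving arguments for $01$-matrices~\cite{Rhoades2010b}, which also hinge on Kostka--Foulkes polynomials and involve exactly the kind of product structure we need.

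Once the polynomial evaluation is in hand, the CSP for every divisor $d \mid n$ follows by matching the two sides, and extends to all $d \in \setZ$ by the standard observation that both quantities depend only on $\gcd(n, d)$. The Lyndon-like property is then immediate: for $\xi = e^{2\pi i/n}$ and any divisor $d \mid n$, applying the CSP identity with $n/d$ in place of $d$ yields
\[
\macdonaldE_{n\lambda/n\mu}(1^m; e^{2\pi i/d}, 0) = \macdonaldE_{n\lambda/n\mu}(1^m; \xi^{n/d}, 0) = \prod_k \binom{m}{\lambda'_k - \mu'_k}^{n/d} = \macdonaldE_{(n/d)\lambda/(n/d)\mu}(1^m; 1, 0),
\]
which is exactly the condition of \cref{def:LyndonLike}.
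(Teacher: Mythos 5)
Your fixed-point count is correct (and matches what the paper ultimately needs), and your concluding Lyndon-like computation is fine. The problem is that the step you yourself call ``the technical heart'' --- the evaluation $\macdonaldE_{n\lambda/n\mu}(1^m;\xi^d,0)=\prod_{k}\binom{m}{\lambda'_k-\mu'_k}^{d}$ --- is never actually proved. You only name two candidate strategies (extending the Lascoux--Leclerc--Thibon factorizations of \cref{prop:hlUnityRoot,prop:hlRectUnityRoot} to a skew/postfix-charge setting, or adapting Rhoades' matrix argument) without carrying either out, and neither is routine: the paper's root-of-unity machinery lives entirely on the straight-shape side via \cref{prop:EasHL}, there is no skew analogue of those plethystic identities available here, and it is not at all clear that the postfix-charge generating polynomials $\sum_T q^{\charge_{(n\mu)'}(T)}$ factor at roots of unity the way ordinary charge generating functions do. As written, the proposal is a reduction of the theorem to an unproved (and hard-looking) identity.

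The paper sidesteps this evaluation entirely by an observation your proposal is missing: a CSP only depends on the polynomial modulo $q^n-1$, so it suffices to prove the congruence
\[
 \macdonaldE_{n\lambda/n\mu}(1^m,q,0)\;\equiv\;\macdonaldE_{n\nu}(1^m,q,0) \pmod{q^n-1},
\]
where $\nu$ is the straight shape whose conjugate has parts given by the multiset $\{\lambda'_i-\mu'_i\}$. This congruence is elementary: a descent at box $(i,j)$ of $n\lambda/n\mu$ contributes $\leg+1=n\lambda_i-j+1$, which is a multiple of $n$ exactly when $j\equiv 1\pmod n$, i.e.\ exactly when the descent straddles two blocks of $n$ columns; and within a block (where adjacent columns have equal height) the number of descents between two adjacent columns depends only on the two column sets, not on the ambient skew shape. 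Hence $\maj$ mod $n$ is computed blockwise from column sets alone, which identifies the two polynomials mod $q^n-1$, and \cref{thm:macdonaldEAtUnityRoots} then supplies all the needed root-of-unity values. You should either adopt this reduction or actually carry out one of your proposed evaluations; until then the argument is incomplete.
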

\begin{proof}
We first note that the number of descents between two adjacent columns of the 
same height only depends on the set of entries in each column, see \cite{Uhlin2019}. 
Now consider the blocks of columns, where each block consists of $n$ consecutive columns (of the same height).
Descents involving two entries from different blocks only contribute with a multiple of $n$
to the major index.
Hence, in order to determine the major index mod $n$ of a filling, 
it suffices to examine the columns in each block separately.
Let $\nu$ be the partition, such that the parts of $\nu'$ 
is given by the multiset $\{ \lambda'_i - \mu'_i : i=1,2,\dotsc\}$.
By the previous observations,
\[
 \macdonaldE_{n \lambda / n\mu}(1^m,q,0) \equiv 
 \macdonaldE_{n \nu}(1^m,q,0) \mod (q^n-1).
\]
It is then straightforward to use the same arguments as in
the non-skew case, \cref{thm:mainCSP}, to finish the proof.
\end{proof}

Note that \cref{thm:refinedMacdonaldCSP} can be generalized to 
the skew setting using a similar argument --- we leave the details to the reader.

\section{Crystal operators on words and SSYT}\label{sec:crystalOperators}

We now recall some minimal background on crystal operators on 
words and semistandard Young tableaux, see \cite{BumpSchilling2017,Shimozono2005} for more background.

The operators $\cryse_i$, $\crysf_i : \setN^k \to \setN^k \cup \{ \emptyset \}$ are defined as follows.
Given a word $w \in \setN^k$ consider the subword $w_i$ consisting only of the letters $i$ and $i+1$.
Replace each instance of $i$ with a right bracket and each $i+1$ with a left bracket.
Remove all pairs of matching brackets and consider the remaining unmatched brackets,
which now consist of $a$ right-brackets and $b$ left-brackets.
These brackets correspond to a subword $w'$ of the form $i^a (i+1)^b$ in $w$.
The operator $\cryse_i$ acting on $w$ turns the leftmost $i+1$ of $w'$ into an $i$,
if such an entry exists, otherwise, $\cryse_i(w)\coloneqq \emptyset$.
Similarly, $\crysf_i$ acting on $w'$ turns the rightmost $i$ in $w'$ into an $i+1$,
if such an entry exists, otherwise, $\crysf_i(w)\coloneqq \emptyset$.
For example,
\[
 \cryse_1( {2, 1, 3, 1, 2, 4, 2, 1, 1, 3, 1, \underline{2}, 3, 2, 1, 2, 1} ) = 
 {2, 1, 3, 1, 2, 4, 2, 1, 1, 3, 1, \underline{1}, 3, 2, 1, 2, 1}.
\]
The operator $\cryse_i$ is a \defin{crystal raising operator}
while $\crysf_i$ is a \defin{crystal lowering operator}.
The operators also act on semistandard Young tableaux
by acting on the reading word.
We define a graph structure on words (or semistandard Young tableaux)
by having a labeled directed edge $u \to v$ with label $i$ if $f_i(u)=v$.
Examples on such components are given in \cref{fig:wordCrystal}.
\begin{figure}[!ht]
\includegraphics[scale=0.7]{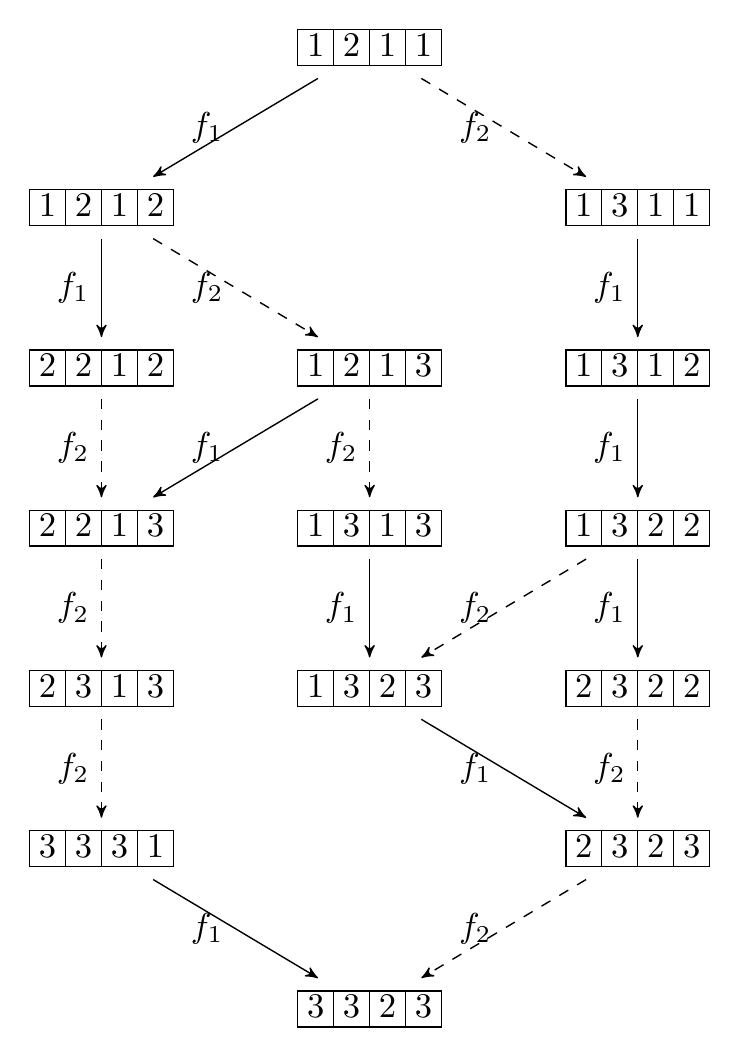}
\includegraphics[scale=0.7]{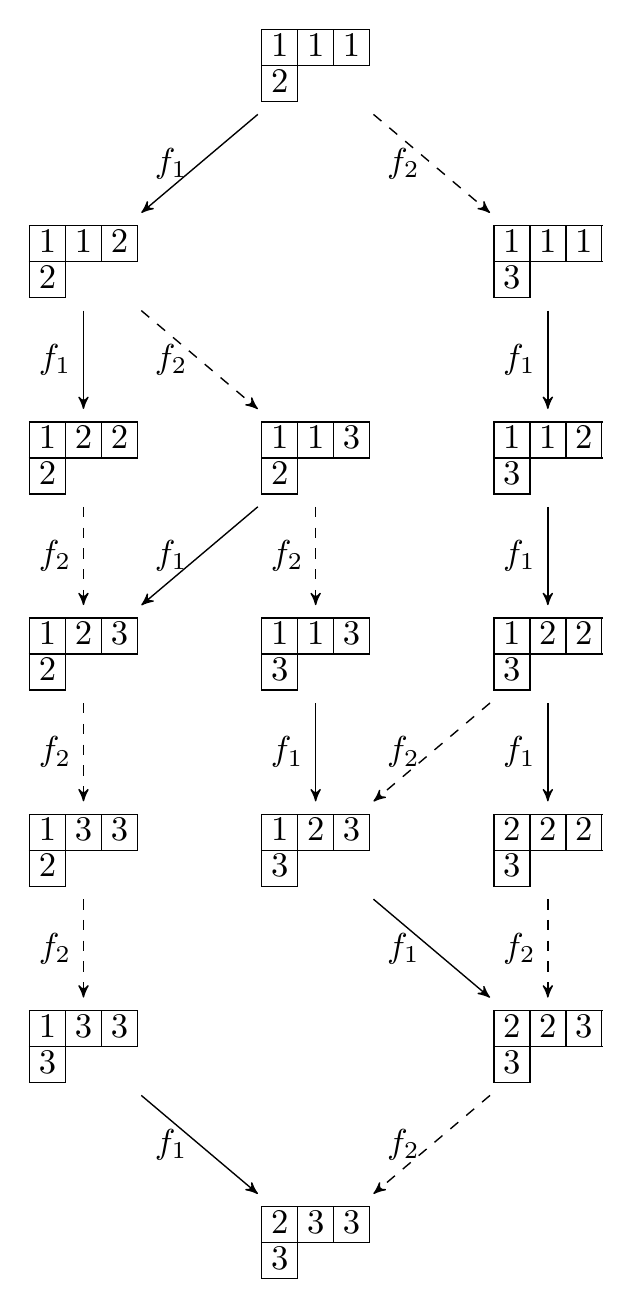}
\caption{A crystal graph on words and one on SSYT.}\label{fig:wordCrystal}
\end{figure}

\subsection{Crystal operators on COF and RSK}

We shall now define crystal operators on the set $\COF(\lambda/\mu)$.
These operators are considered in \cite{Uhlin2019},
and the non-skew case was recently considered by S.~Assaf and N.~Gonz\'ales~\cite{AssafGonzalez2018},
where the authors prove that they are indeed crystal operators.
Here, we take a slightly different route and define the operators on biwords instead ---
it is straightforward to verify that our definitions are equivalent with theirs.
These \defin{crystal biwords} are closely related to the biwords we have seen previously.

\begin{definition}[Crystal operators on fillings]
Let $F \in \COF(\lambda/\mu)$ and define the \defin{crystal biword} $\tilde{W}$,
with entry $\left(\begin{smallmatrix} j \\ c \end{smallmatrix}\right)$ appearing in $\tilde{W}$
if and only if there is a box with value $j$ in column $c$.
The entries in $\tilde{W}$ are then sorted decreasingly, primarily on the \emph{bottom row entry},
see \cref{fig:crystalBiword}. 
Note that this is map to $\tilde{W}$ is invertible if $\lambda/\mu$ is fixed.
We then define $\cryse_i(F)$, and $\crysf_i(F)$ as the result when applying 
$e_i$ and $f_i$, respectively, on the first row of $\tilde{W}$.

For the subset of coinversion-free fillings $F$ with $\maj(F)=0$, 
these operators are essentially a generalization of the raising-- and lowering 
operators defined on semistandard Young tableaux.
\end{definition}

\begin{figure}[!ht]
\ytableausetup{boxsize=0.9em}
\begin{align*}
F=\ytableaushort{\none \none 213, 331, 22,14}, \quad 
\tilde{W}=
\setlength\arraycolsep{2pt}
\begin{pmatrix}
	3 & 1 & 2 & 1 & 4 & 3 & 2 & 3 & 2 & 1\\
	5 & 4 & 3 & 3 & 2 & 2 & 2 & 1 & 1 & 1
\end{pmatrix}, \quad 
W =
\setlength\arraycolsep{2pt}
\begin{pmatrix}
	1 & 1 & 1 & 2 & 2 & 2 & 3 & 3 & 3 & 4\\
	4 & 3 & 1 & 3 & 2 & 1 & 5 & 2 & 1 & 2
\end{pmatrix} 
\end{align*}
\caption{
A skew specialized Macdonald filling $F$, the corresponding crystal biword $\tilde{W}$
and the biword $W$ used for RSK. 
Note that the only difference between $\tilde{W}$ and $W$
is the ordering of the entries.}
\label{fig:crystalBiword}
\end{figure}

The following theorem was proven independently 
by S.~Assaf and N.~Gonz\'ales~\cite{AssafGonzalez2018} and the second author \cite{Uhlin2019}.
\begin{theorem}[See \cite{Uhlin2019,AssafGonzalez2018}]\label{thm:majPres}
The operators $\cryse_i$ and $\crysf_i$ preserve major index. 
That is, suppose that $F$ and  $F'$ are in $\COF(\lambda/\mu)$ and 
that $\cryse_i(F), \crysf_i(F') \neq \emptyset$. 
Then
\[
\maj(\cryse_i(F))=\maj(F) \text{ and } \maj(\crysf_i(F'))=\maj(F').
\]
Furthermore, $F$, $\cryse_i(F)$ and $\crysf_i(F)$ differ only at boxes with entries $i$ and $i+1$.
\end{theorem}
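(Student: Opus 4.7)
The plan is to handle the two assertions separately, reducing the $\maj$-preservation claim to the non-skew case. For the structural assertion — that $F$, $\cryse_i(F)$, and $\crysf_i(F)$ differ only at boxes carrying entries in $\{i, i+1\}$ — my approach is to work directly with the crystal biword $\tilde{W}$. The operators act by flipping exactly one biletter of the form $(i+1, c) \leftrightarrow (i, c)$ in the top row via the standard bracketing procedure, while the column labels in the bottom row are untouched. By \cref{prop:columnSets}, the reconstructed filling is uniquely determined by its column sets, which here differ only by a single $i \leftrightarrow i+1$ swap in one column $c$. A small argument based on the inversion-triple conditions with adjacent columns then shows that the non-$\{i, i+1\}$ entries of column $c$ keep their rows, using the fact that the bracketing rule cannot select a column that already contains both $i$ and $i+1$ (such a pair would be adjacent in $\tilde{W}$ and hence matched).

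For the $\maj$-preservation claim, the plan is to first invoke \cref{prop:majAspostfixCharge} to translate the statement into $\charge_{\mu'}(\cw(\cryse_i F)) = \charge_{\mu'}(\cw(F))$. Next, I would pass to the extended filling $\hat{F}$ of \eqref{eq:extendedFilling}: for $M$ chosen large enough, the appended entries have values far exceeding $i+1$ and are therefore invisible to the bracketing rule on the $\{i, i+1\}$-subword. Consequently $\cryse_i \hat{F} = \widehat{\cryse_i F}$, and since $\maj(F) = \maj(\hat{F})$, the problem reduces to showing that $\cryse_i$ and $\crysf_i$ preserve $\maj$ on $\COF(\lambda)$ for every partition $\lambda$.

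The non-skew case is the combinatorial core of the result. Here the crystal operator modifies the set of exactly one column $c$, chosen by the bracketing rule to contain the leftmost unmatched $i+1$ (so in particular $c$ holds $i+1$ but not $i$). Since only one entry changes value and the other entries in column $c$ stay in place, the only descents that can be affected lie in columns $c-1$, $c$, and $c+1$; the plan is to verify by a local case analysis on the entries of those three columns that the net change
\[
 \sum_{b \in \Des(\cryse_i F)} (\leg(b)+1) - \sum_{b \in \Des(F)} (\leg(b)+1)
\]
is zero. The hard part will be precisely this cancellation: one must align the bracketing rule's selection of the unmatched column with the descent pattern of the neighboring columns, so that the descents "created" on one side are exactly compensated by descents "destroyed" on the other. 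This delicate balancing, which exploits the rigid structure of coinversion-free fillings, is the technical heart of the argument, and the full verification is carried out in \cite{Uhlin2019, AssafGonzalez2018}.
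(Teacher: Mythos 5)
The paper does not actually prove \cref{thm:majPres}: it imports the statement from \cite{Uhlin2019,AssafGonzalez2018}, so there is no internal argument to compare yours against. That said, the parts of your outline that are concrete are correct and worthwhile. The reduction of the skew case to the non-skew case via the extended filling $\hat{F}$ of \eqref{eq:extendedFilling} works: the appended entries are large, hence invisible to the $\{i,i+1\}$-bracketing, so $\cryse_i\hat{F}=\widehat{\cryse_i F}$ by the uniqueness in \cref{prop:columnSets}, and $\maj(F)=\maj(\hat{F})$ transports the non-skew statement to the skew one. Your observation that a column containing both $i$ and $i+1$ contributes only a matched bracket pair (the two letters are adjacent in the restricted subword of $\tilde{W}$) is also correct, so the modified column contains exactly one of $i,i+1$.

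The gap is that both halves of the theorem bottom out in a case analysis you do not carry out. For the ``Furthermore'' clause, the ``small argument based on the inversion-triple conditions'' is not small: replacing $i+1$ by $i$ in the set of column $c$ can in principle perturb the unique coinversion-free arrangement of column $c$ and, through the triple conditions, of columns $c+1,c+2,\dotsc$ as well, so one must genuinely show that the perturbation is confined to boxes containing $i$ or $i+1$ (and these need not all lie in column $c$). For $\maj$-preservation, the entire content of the theorem is the cancellation of descent contributions in columns $c-1$, $c$, $c+1$, each weighted by $\leg(b)+1$, which varies with the row; this analysis must exploit precisely which occurrence of $i+1$ (resp.\ $i$) the bracketing rule selects, since a generic single-column swap of $i\leftrightarrow i+1$ does not preserve $\maj$. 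Deferring exactly this verification to \cite{Uhlin2019,AssafGonzalez2018} leaves an outline rather than a proof; a self-contained argument would still require reproducing the direct case analysis of \cite{Uhlin2019} or the verification of the Stembridge axioms as in \cite{AssafGonzalez2018,Stembridge2003}.
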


In \cref{thm:crystalOp} below we prove that
the operators $\cryse_i$ and $\crysf_i$ define 
proper crystal graphs on the set $\COF(\lambda/\mu)$.
See \cref{fig:cofCrystalGraph} for examples of crystal graphs on coinversion-free fillings.

\begin{figure}[!ht]
\includegraphics[scale=0.7]{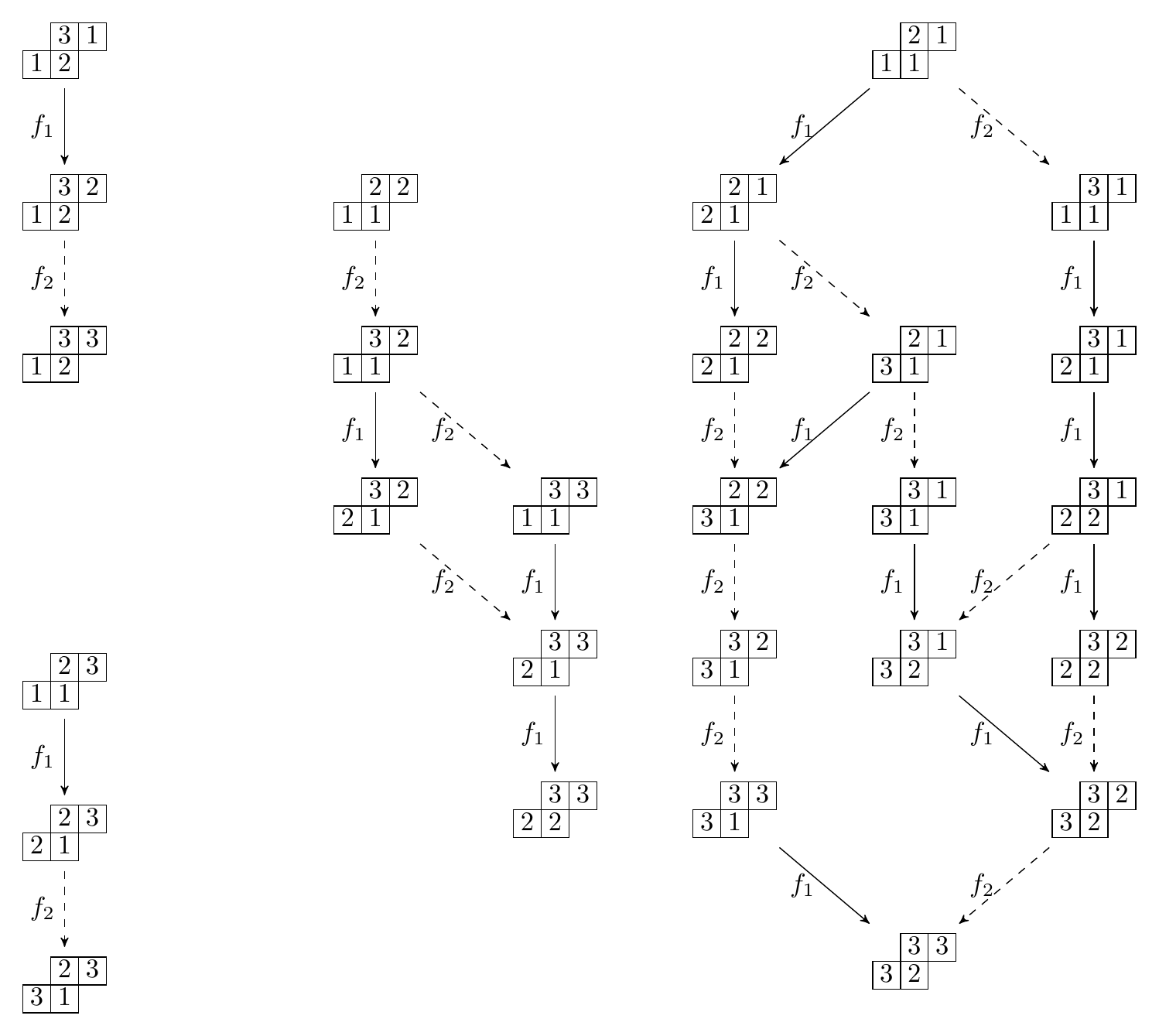}
\caption{The crystal structure on skew coinversion-free fillings of 
shape $\lambda/\mu$ with $\lambda=(3,2)$ and $\mu=(1)$ in $3$ variables.
Notice that the large component is isomorphic to 
the crystal graphs in \cref{fig:wordCrystal}.
}\label{fig:cofCrystalGraph}
\end{figure}

There is an important interaction between the crystal operators and the RSK correspondence, 
as we shall see in the following example. 
\begin{example}\label{ex:raising op biwords}
Suppose $F \in \COF(\lambda)$ with biword $W$
\[
\setlength\arraycolsep{2pt}
W = 
\begin{pmatrix}
1 & 1 & 1 & 2 & 2 & 2 & 3 & 3 & 3 & 4\\
4 & 3 & 1 & 3 & 2 & 1 & 5 & 2 & 1 & 2
\end{pmatrix}.
\]
The crystal biword is 
\[
\setlength\arraycolsep{2pt}
\tilde{W}=
\begin{pmatrix}
3 & 1 & 2 & 1 & 4 & 3 & 2& 3 & 2 &  1 \\
5 & 4 & 3 & 3 & 2 & 2 & 2& 1 & 1 &  1 
\end{pmatrix}.
\]
Apply $e_1$ on the top row; 
We have the subword $1 21 2 21$ which turn into $1 21 1 21$ under $e_1$.
\[
\setlength\arraycolsep{2pt}
\begin{pmatrix}
3 & 1 & 2 & 1 & 4 & 3 & \underline{1}& 3 & 2 &  1 \\
5 & 4 & 3 & 3 & 2 & 2 & 2& 1 & 1 &  1 
\end{pmatrix}
\]
Sort columns again to obtain the biword $W'$ that correspond to $\cryse_1(F)$.
\[
\setlength\arraycolsep{2pt}
 \cryse_1(F) \leftrightarrow 
\begin{pmatrix}
1 & 1 & 1 & 1 & 2 & 2 & 3 & 3 & 3 & 4\\
4 & 3 & 2 & 1 & 3 & 1 & 5 & 2 & 1 & 2
\end{pmatrix}.
\]
If we now perform RSK on $W$ and $W'$ we see in \cref{fig:biword RSK} 
that $\cryse_1$ has a predictable effect on the 
corresponding insertion- and recording tableau, see \cref{thm:crystalOp}.
\begin{figure}[ht!]
\[
\left(
\ytableaushort{1112,225,33,4},\quad
\ytableaushort{1234,123,13,2}
\right) \xrightarrow{e_1}
\left( \ytableaushort{1112,225,33,4}, \quad \ytableaushort{1234,123,13,1}\right)
\]
\caption{An example of the raising operator at the level of RSK. 
The pairs of tableaux correspond to the two biwords in \cref{ex:raising op biwords}.}
\label{fig:biword RSK}
\end{figure}
\end{example}

\begin{theorem}\label{thm:crystalOp}
Let $F \in \COF(\lambda/\mu)$ and suppose $\cryse_i(F) \neq \emptyset$. 
Then $\ins(F)=\ins(\cryse_i(F))$ and $e_i(\rec(F))=\rec(\cryse_i(F))$.
Stated equivalently on biwords: Let $W$ be a biword and let $\tilde{W}$ be its entries 
reordered such that it is a crystal biword. Suppose $\cryse_i(\tilde{W}) \neq \emptyset$,
and that $W'$ is the biword corresponding to $\cryse_i(\tilde{W})$.
Then $\ins(W) = \ins(W')$ and $e_i(\rec(W)) = \rec(W')$.
The analogous statements for $\crysf_i$ also hold.
\end{theorem}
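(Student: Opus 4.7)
My plan is to reduce the theorem to the biword formulation and then invoke the classical compatibility between RSK and crystal operators. The filling formulation will follow immediately from the biword formulation, since by the definition of $\cryse_i$ on $\COF(\lambda/\mu)$ via \cref{def:biword}, the bijection $F \leftrightarrow \tilde{W}$ intertwines $\cryse_i$ on fillings with the word-level crystal operator on the top row of $\tilde{W}$.

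For the biword statement, let $W$ be the Burge biword and $\tilde{W}$ the re-sorted crystal biword of $F$; they contain the same multiset of biletters. The operator $\cryse_i$ replaces a single biletter $\binom{i+1}{c}$ with $\binom{i}{c}$, where $c$ is selected by the bracket-matching procedure on the top row of $\tilde{W}$. After re-sorting into Burge form one obtains $W'$. The goal is to show that $\ins(W) = \ins(W')$ and $\rec(W') = e_i\,\rec(W)$, where $e_i$ is the classical crystal operator applied to the tableau $\rec(W)^t$, which is semistandard. My strategy is to identify the top-row word of $\tilde{W}$ with (a word Knuth-equivalent to) a standard reading word of $\rec(W)^t$. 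Once this identification is established, the claim about $\rec$ follows from classical coplactic invariance, and the claim about $\ins$ follows because the multiset of bottom-row entries of the biword (which determines the content of $\ins$) is unchanged, together with the observation that the Burge-induced reordering of the bottom row is itself Knuth-equivalent to the original ordering.

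The main obstacle is establishing the Knuth-equivalence between the top row of $\tilde{W}$ and a reading word of $\rec(W)^t$. One approach is induction on the number of biletters: the base case is trivial, and the inductive step requires tracking how row-insertion of the last biletter affects both $\rec(W)^t$ and the top row of $\tilde{W}$, showing that the unpaired-bracket positions transform compatibly. An alternative, more structural route is to verify that the top-row crystal on $\tilde{W}$ and the tableau crystal on $\rec(W)^t$ satisfy the Stembridge axioms and produce connected crystals with the same weight-generating function; since weight-preserving isomorphisms between connected normal crystals are unique, the RSK bijection must implement the isomorphism. A parallel analysis takes care of the reorder step for the bottom row, showing $\ins$ is preserved. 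The statement for $\crysf_i$ then follows either by the same argument applied to left-brackets, or by the observation that $\crysf_i$ is a partial inverse of $\cryse_i$ wherever both are defined.
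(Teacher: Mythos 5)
Your overall plan --- pass to biwords, identify the top row of $\tilde{W}$ up to Knuth equivalence with a reading word of $\rec(W)^t$, and then invoke coplacticity of $\cryse_i$ --- is a legitimate route, and it genuinely differs from the paper's, which instead restates the claim for the dual (column-insertion) RSK, uses the known comparison between dual and classical RSK, and then cites the classical RSK--crystal compatibility. Your route would be more self-contained if completed, but as written it has two concrete gaps.

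First, the preservation of $\ins$ is not justified. The multiset of bottom-row entries only determines the \emph{content} of $\ins(W)$, not the tableau itself. When $\cryse_i$ turns a top-row letter $i+1$ into $i$, the corresponding biletter migrates to a different block under the Burge re-sorting, so the bottom-row word of $W'$ is a genuine reordering of that of $W$; the assertion that this reordering is a Knuth equivalence is exactly the nontrivial content of the claim $\ins(W)=\ins(W')$, and you state it as an ``observation'' without proof. (This is the step the paper disposes of by appealing to properties of the classical RSK algorithm.) Second, your fallback argument via the Stembridge axioms and uniqueness of crystal isomorphisms does not work as stated: to restrict attention to connected components you would already need to know that $\cryse_i$ preserves the fibers of $\ins$, which is part of what is being proved, and, more seriously, a weight-preserving bijection between two connected normal crystals need not be a crystal morphism when weight multiplicities exceed one, so agreement of weight generating functions does not force the RSK bijection to implement the isomorphism. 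Your primary route --- an induction establishing that the top row of $\tilde{W}$ is Knuth-equivalent to $\rw(\rec(W)^t)$, followed by coplacticity --- is sound in outline; note that to pass from Knuth equivalence of reading words to the equality $e_i(\rec(W))=\rec(W')$ you should also record that Knuth-equivalent reading words of semistandard tableaux determine the tableau, with shapes matching because Knuth-equivalent words insert to the same tableau. But the key lemma is acknowledged rather than proved, so the argument is incomplete at essentially the same point where the paper defers to the literature.
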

\begin{proof}[Proof sketch:]
The fact that $\ins(F)=\ins(\cryse_i(F))$ follows from 
properties of the classical RSK algorithm.

The second property requires some more work, but every step is 
a routine transformation using known properties of various versions of RSK.
First restate the property to the analogous statement about the 
\emph{dual} RSK insertion algorithm (using column insertion), 
see \cite[Sec. 4.3]{Krattenthaler2006}.
One can then use that dual RSK and classical RSK are related in a 
simple manner (see e.g. \cite[Prop. 2.3.14]{Butler1994})
and reduce the problem further to the case of the classical RSK.
For classical RSK, the interaction with crystal operators is well-documented,
see e.g. \cite{Shimozono2005,Lascoux2003,BumpSchilling2017}.
\end{proof}
Using the bijection in \cref{thm:crystalOp}, we see that the set $\COF(\lambda/\mu)$
is indeed a crystal graph under the raising- and lowering operators,
since we have an equivariant bijection with crystals and crystal operators on semistandard tableaux.
We remark that S.~Assaf and N.~Gonz\'ales \cite{AssafGonzalez2018} proved 
the same result in the non-skew case by verifying the local 
characterization axioms introduced by Stembridge, see \cite{Stembridge2003}.

We shall now briefly discuss an application of the crystal operators. 
Using the crystal operators $\cryse_i$ and $\crysf_i$, 
we can define involutions on $\COF(\lambda/\mu)$. 
For a coinversion-free filling $F$, denote $m_i=m_i(F)$ the number of $i$-entries of $F$.
\begin{definition}\label{def:lsInv}
For $i \in \setN$, define the operators $\cryss_i$ on $\COF(\lambda/\mu)$ by letting
	\[ 
	\cryss_i(F) \coloneqq \begin{cases}
	(\cryse_i)^{m_{i+1}-m_i}(F) & \text{ if } m<m_{i+1}\\
	(\crysf_i)^{m-m_{i+1}}(F) & \text{ if } m_i>m_{i+1}\\
	F & \text{ if } m_i=m_{i+1}.
	\end{cases}
\]
\end{definition}
Restricted to the set of coinversion-free fillings with $\maj=0$, the operators $\cryss_i$ are 
essentially the famous Lascoux--Sch{\"{u}}tzenberger involutions \cite{LascouxSchutzenberger78}. 
The difference being that the elements with $\maj=0$ have weakly decreasing rows and strictly increasing 
columns as opposed to the weakly increasing in rows 
and strictly increasing in columns for in semistandard Young tableaux.

It is clear by \cref{thm:majPres} that the operators $\cryss_i$ are $\maj$-preserving involutions. 
Furthermore, if $F \in \COF(\lambda/\mu)$ and $\weight(F)=(w_1, \dots, w_r)$, 
then $\cryss_i(F)=(w_1, \dotsc, w_{i+1},w_i, \dotsc, w_r)$. 
This yields yet another proof that $\macdonaldE_{\lambda/\mu}(\xvec; q, 0)$ is symmetric. 
In fact, it follows from general theory of crystals that 
the operators $\cryss_1$, $\cryss_2$, $\dotsc,\cryss_{r-1}$ 
generate an $\symS_r$-action on $\COF(\lambda/\mu, r)$.

\section{Schur expansion of certain vertical-strip LLT polynomials}\label{sec:llt}

In this section, we briefly sketch that $\macdonaldE_{\lambda/\mu}(\xvec;q,0)$
sometimes is a \emph{vertical strip LLT polynomial}, up to a power of $q$.
As a consequence, we therefore obtain an explicit formula for the Schur expansion of 
these particular LLT polynomials. 
Hence, we provide a new family of LLT polynomials with a combinatorial Schur expansion,
not covered by previous results.
We note that it is a major open 
problem in general to describe the LLT polynomials in the Schur basis.

\begin{definition}[As in \cite{Haglund2005Macdonald}]
Let $\nuvec$ be a $k$-tuple of skew Young diagrams. Given such a tuple, we let 
$\SSYT(\nuvec) = \SSYT(\nuvec^1) \times \SSYT(\nuvec^2)\times \dotsm \times \SSYT(\nuvec^k)$
where $\SSYT(\lambda/\mu)$ is the set of skew semistandard Young tableaux of shape $\lambda/\mu$.
Given $T = (T^1,T^2,\dotsc,T^k) \in \SSYT(\nuvec)$,
let $\xvec^T \coloneqq \xvec^{T^1}\dotsm \xvec^{T^k}$ where $\xvec^{T^i}$
is the same monomial weight of $T^i$ as for Schur polynomials.
Given a cell $u = (r,c)$ (row, column) in a skew diagram, the \emph{content} 
is defined as $c(u)\coloneqq c-r$. 
Entries $T^i(u) > T^j(v)$ in a tuple form an \emph{inversion} if and only if
\[
 \text{$i<j$ and $c(u) = c(v)$}, \quad  \text{or} \quad 
 \text{$i>j$ and $c(u) = c(v)-1$}.
\]
The \defin{LLT polynomial} associated with the $k$-tuple $\nuvec$ is given by
\[
\LLT_\nuvec(\xvec;q) = \sum_{T \in \SSYT(\nuvec)}  q^{\inv(T)} \xvec^T 
\]
where $\inv(T)$ is the total number of inversions appearing in $T$.
One can show that $\LLT_\nuvec(\xvec;q)$ is a symmetric function, see \cite{Haglund2005Macdonald} or 
\cite{AlexanderssonPanova2016} for short proofs.
\end{definition}
LLT polynomials such that each $\nuvec^j$ is a skew shape of the form $1^a/1^b$ with $a \geq b$
are called \defin{vertical strip LLT polynomials}. 
Given a $k$-tuple $\nuvec$, we let $\mininv(\nuvec)$ be the 
minimum number of inversions obtainable over all fillings. 
That is,
\begin{equation}\label{eq:mininvDef}
 \mininv(\nuvec) \coloneqq \min_{T \in \SSYT(\nuvec)} \inv(T).
\end{equation}

\begin{example}
 A $k$-tuple of skew shapes is traditionally illustrated using the \emph{French convention}
 where box $(1,1)$ of each shape $\nuvec^i$ is placed on the line $y=x$ with content $0$,
 and Cartesian coordinates are used. Below, we illustrate an element 
 \[
 T \in \SSYT(1^3/\emptyset)\times \SSYT(1^3/1^1) \times \SSYT(1^2/1^1)\times \SSYT(1^3/\emptyset)
 \]
 which appears when computing the vertical-strip LLT polynomial $\LLT_\nuvec(\xvec;q)$
 for $\nuvec = (111/\emptyset,111/1, 11/1, 111/\emptyset)$.
 \[
 T=
\begin{tikzpicture}[baseline=(current bounding box.center)]
\draw[step=1em, gray, very thin] (-0.001,0) grid (9em,9em);
\draw[gray, very thin, dashed,x=1em,y=1em] (0,0) -- (9,9);
\draw[gray, very thin, dashed,x=1em,y=1em] (0,1) -- (8,9);
\draw[gray, very thin, dashed,x=1em,y=1em] (0,2) -- (7,9);
\node[x=1em,y=1em] (9) at (6.5, 6.5) {2};
\node[x=1em,y=1em] (8) at (0.5, 0.5) {1};
\node[x=1em,y=1em] (7) at (6.5, 7.5) {4};
\node[x=1em,y=1em] (6) at (2.5, 3.5) {3};
\node[x=1em,y=1em] (5) at (0.5, 1.5) {2};
\node[x=1em,y=1em] (4) at (6.5, 8.5) {5};
\node[x=1em,y=1em] (3) at (4.5, 5.5) {5};
\node[x=1em,y=1em] (2) at (2.5, 4.5) {6};
\node[x=1em,y=1em] (1) at (0.5, 2.5) {4};
\end{tikzpicture}
 \]
There are two inversions involving boxes $u$ and $v$ where $c(u)=c(v)$
and six inversions for which $c(u)=c(v)-1$. 
Hence, $T$ contributes with $q^8 x_1 x_2^2 x_3x_4^2 x_5^2 x_6$.
The full LLT polynomial $\LLT_\nuvec(\xvec;q)$ in the Schur basis is given by 
\begin{align*}
 &q^8 \schurS_{333}+q^7 \schurS_{432}+(q^9+q^{10}+q^{11}) \schurS_{3222}+(q^8+2 q^9+q^{10}) \schurS_{3321} +(q^8+q^9) \schurS_{4221} \\
 &+q^8 \schurS_{4311}+(q^{10}+q^{11}+q^{12}+q^{13}) \schurS_{22221} +(q^9+3 q^{10}+2 q^{11} +q^{12}) \schurS_{32211} \\
 &+(q^9+q^{10}+q^{11}) \schurS_{33111} +(q^9+q^{10}) \schurS_{42111} +(2 q^{11}+2 q^{12}+q^{13}+q^{14}) \schurS_{222111} \\
 &+(q^{10}+2 q^{11}+2 q^{12}+q^{13}) \schurS_{321111} +q^{11} \schurS_{411111} +(q^{12}+2 q^{13}+q^{14}+q^{15}) \schurS_{2211111} \\
 &+(q^{12}+q^{13}+q^{14}) \schurS_{3111111} +(q^{14}+q^{15}+q^{16}) \schurS_{21111111}+q^{17} \schurS_{111111111}.
\end{align*}
As $q^7$ is the lowest power of $q$ that appear in the expansion, we must have that $\mininv(\nuvec)=7$.
\end{example}

The current state-of-the-art regarding combinatorial proofs of Schur positivity 
of LLT polynomials is as follows.
\begin{itemize}
 \item When all shapes in $\nuvec$ are non-skew, the coefficients in the Schur basis are known to be 
 certain parabolic Kazhdan--Lusztig polynomials, see \cite{LeclercThibon2000}.
 Hence, the coefficients are in $\setN[q]$.
 In particular, this case contains the Hall--Littlewood symmetric functions.
 
 \item Whenever the $k$-tuple of shapes $\nuvec$ consists of at most $3$ shapes, 
 all avoiding an arrangement of $2\times 2$-boxes (that is, they are ribbons), 
 Schur positivity is given by a combinatorial formula, see J.~Blasiak~\cite{Blasiak2016}.
 
 \item A few other cases when each shape in $\nuvec$ is a single box is given in \cite{HuhNamYoo2018}.
\end{itemize}

\begin{theorem}\label{thm:lltNewFormula}
Let $\lambda/\mu$ be a skew shape such that no column contains more than two boxes. 
Then 
\[
 \macdonaldE_{\lambda'/\mu'}(\xvec;q,0) = q^{-\mininv(\nuvec)} \LLT_{\nuvec}(\xvec;q)
\]
where $\nuvec_{j}$ is the vertical strip $(\lambda_j) /(\mu_j)$.
\end{theorem}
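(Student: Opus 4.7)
My plan is to prove the identity by establishing a weight-preserving bijection between $\COF(\lambda'/\mu')$ and $\SSYT(\nuvec)$ and then to compare the statistics $\maj$ and $\inv$ up to the constant shift $\mininv(\nuvec)$.

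First I would set up the bijection $\psi:\COF(\lambda'/\mu')\to\SSYT(\nuvec)$. By the skew extension of \cref{prop:columnSets} (noted in \cref{sec:skew}), a coinversion-free filling $F$ is uniquely determined by its ordered tuple of column sets $(S_1,\dots,S_\ell)$ with $|S_i|=\alpha_i=\lambda_i-\mu_i$. Since each $\nuvec^i=1^{\lambda_i}/1^{\mu_i}$ is a single column, an element of $\SSYT(\nuvec)$ is the same data after sorting each $S_i$ increasingly, so $\psi$ is weight-preserving.

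Next I would simplify both statistics using the hypothesis. Since no column of $\lambda/\mu$ has more than two cells, every row of $\lambda'/\mu'$ has length at most two with descents of leg zero, so $\maj(F)$ reduces to counting length-two rows on which $F$ has a left-to-right ascent. Symmetrically, at most two of the shapes $\nuvec^i$ contain a cell at any fixed row, which constrains $\inv(T)$ to same-content inversions (at most one per height-two column of $\lambda/\mu$) together with consecutive-content inversions between adjacent rows.

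The heart of the argument is the generating-function identity $\sum_{F} q^{\inv(\psi(F))}=q^{\mininv(\nuvec)}\sum_F q^{\maj(F)}$ for each fixed content. I would decompose $\inv(T)$ into a forced part, summing to $\mininv(\nuvec)$ by construction, and a free part to be matched with $\maj(F)$. The main obstacle is that $\psi$ does not preserve these statistics pointwise: the columns of $F$ follow the inversion-triple rule rather than being sorted, so the local contributions at a given height-two column generally differ between $F$ and $\psi(F)$, and only the total sums over column-set configurations agree. I would resolve this via a local cancellation argument at each height-two column, pairing configurations through an involution that swaps the entries responsible for a COF descent in $F$ with those responsible for a free inversion in $\psi(F)$. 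Alternatively, one can invoke the $\maj$-preserving Lascoux--Sch\"utzenberger involutions $\cryss_i$ of \cref{def:lsInv} together with the standard LLT crystal operators on $\SSYT(\nuvec)$ to reduce the identity to highest-weight fillings, where it follows from matching \cref{thm:skewEInSchurExpansion} against the vertical-strip LLT character component by component.
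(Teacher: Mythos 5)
Your setup is sound: the column-set bijection $\psi$ between $\COF(\lambda'/\mu')$ and $\SSYT(\nuvec)$ is correct and weight-preserving, and you rightly observe that it cannot preserve the statistics pointwise (already for $\lambda/\mu=(1,1)$ the unique two-box row gives $\maj(F)=[a<b]$ while $\inv(\psi(F))=[a>b]$). But the argument stops exactly where the theorem begins: the claim that $\maj$ and $\inv-\mininv(\nuvec)$ are equidistributed over each content class is the entire content of the statement, and neither of your proposed resolutions closes it. The ``local cancellation at each height-two column'' cannot be carried out column by column, because which entry of a two-element column set lands in a given row of $F$ is dictated globally by the inversion-triple condition against neighbouring columns; swapping the entries responsible for one descent perturbs the contributions of adjacent columns, so the involution would have to be exhibited and verified globally, which is not done. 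The crystal alternative is circular: reducing to highest-weight elements and ``matching \cref{thm:skewEInSchurExpansion} against the vertical-strip LLT character component by component'' presupposes a Schur expansion of $\LLT_{\nuvec}(\xvec;q)$, and no such expansion is available independently --- producing one is precisely the point of \cref{thm:lltFromCharge}. (You would also need a crystal structure on $\SSYT(\nuvec)$ on whose components $\inv$ is constant, which is itself a nontrivial input.)

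For comparison, the paper does not attempt a direct bijection. It writes $\macdonaldE_{\lambda'/\mu'}(\xvec;q,0)=[t^{\ast}]\macdonaldH_{\lambda'/\mu'}(\xvec;t,q)$, invokes Bandlow's theorem that the symmetry $\macdonaldH_{\lambda/\mu}(\xvec;q,t)=\macdonaldH_{\lambda'/\mu'}(\xvec;t,q)$ persists for skew shapes whose columns contain at most two boxes, and then extracts the top $t$-coefficient from the skew extension of the Haglund--Haiman--Loehr expansion of $\macdonaldH$ into LLT polynomials, which is exactly the vertical-strip term $q^{-\mininv(\nuvec)}\LLT_{\nuvec}(\xvec;q)$. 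Your bijective route, if completed, would amount to an independent combinatorial proof of that $q,t$-symmetry in this special case --- a worthwhile goal, but substantially harder than the outline suggests, and the key equidistribution step remains unproved as written.
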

\begin{proof}[Proof sketch]
For the \defin{modified Macdonald polynomials} $\macdonaldH_{\lambda}(\xvec;q,t)$,
we have the symmetry $\macdonaldH_{\lambda}(\xvec;q,t) = \macdonaldH_{\lambda'}(\xvec;t,q)$.
This interchanges the r\^ole of inversion triples and major index, see \cite{qtCatalanBook}.
This relationship extends to modified Macdonald polynomials indexed by 
skew shapes $\lambda/\mu$ as long as each column contains at most two boxes, 
see J.~Brandlow,~\cite[Thm. 5]{Brandlow2007}.

There is a correspondence between inversion triples and inversions
that appearing definition of LLT polynomials.
In \cite[Eq. (23)]{Haglund2005Macdonald}, the authors 
provide (via a straightforward bijective argument) an expansion of the form
\begin{align}\label{eq:macdonaldHInLLT}
 \macdonaldH_{\lambda}(\xvec;q,t) = \sum_{D}  q^{\maj(D)} t^{-\stat(D)} \LLT_{\nu(D)}(\xvec;t),
\end{align}
where the sum runs over all subsets (possible descents) of boxes $(i,j)$ with $i>1$ of the diagram $\lambda$.
In particular, the coefficient of the terms maximizing the major index a vertical-strip LLT polynomial.
This expansion has a natural extension to skew shapes and one can check that $\stat(\cdot)$ corresponds to $\mininv(\cdot)$
for the highest-degree term.
Combining all these observations we have
\begin{align*}
 \macdonaldE_{\lambda'/\mu'}(\xvec;q,0) =
 [t^\ast]\macdonaldH_{\lambda'/\mu'}(\xvec;t,q)=
 [t^\ast]\macdonaldH_{\lambda/\mu}(\xvec;q,t) =
 q^{-\mininv(\nuvec)}\LLT_{\nuvec}(\xvec;q).
\end{align*}
The first identity is due to \eqref{eq:macdonaldEAsSkewMacdonaldH}.
The second identity is the tricky part and relies on \cite{Brandlow2007}.
The third identity is a simple generalization of \eqref{eq:macdonaldHInLLT}.
\end{proof}

\begin{example}
We illustrate \cref{thm:lltNewFormula} in the case $\lambda/\mu = 4431/31$.
The skew shape $\lambda/\mu$ is illustrated in \eqref{eq:lltExample} where we have labeled the boxes from right to left,
top to bottom. The corresponding $k$-tuple of vertical strips is shown to the right.
The labeling has the property that it maps \emph{inversion pairs} in the filling to the right,
to inversions in the LLT diagram, see \cite{Haglund2005Macdonald} for details.

\begin{equation}\label{eq:lltExample}
 \ytableaushort{{\none}{\none}{\none}1,{\none}532,764,8}
 \qquad 
 \qquad 
 \begin{tikzpicture}[baseline=(current bounding box.center)]
\draw[step=1em, gray, very thin] (-0.001,-3em) grid (11em,8em);
\draw[gray, very thin, dashed,x=1em,y=1em] (0,0) -- (8,8);
\draw[gray, very thin, dashed,x=1em,y=1em] (0,-1) -- (9,8);
\draw[gray, very thin, dashed,x=1em,y=1em] (0,-2) -- (10,8);
\draw[gray, very thin, dashed,x=1em,y=1em] (0,-3) -- (11,8);
\node[x=1em,y=1em] (1) at (0.5, 0.5) {1};
\node[x=1em,y=1em] (2) at (3.5, 3.5) {2};
\node[x=1em,y=1em] (3) at (3.5, 2.5) {3};
\node[x=1em,y=1em] (5) at (3.5, 1.5) {5};
\node[x=1em,y=1em] (4) at (7.5, 6.5) {4};
\node[x=1em,y=1em] (6) at (7.5, 5.5) {6};
\node[x=1em,y=1em] (7) at (7.5, 4.5) {7};
\node[x=1em,y=1em] (7) at (10.5, 7.5) {8};
\end{tikzpicture}
\end{equation}
Notice that no column contains more than two boxes so the conditions in the theorem applies.
The $k$-tuple $\nuvec$ is $1111/111$, $1111/1$, $111/\emptyset$, $1/\emptyset$, 
and it is easy (for a computer) to check that
\begin{align*}
\macdonaldE_{\lambda'/\mu'}(\xvec;q,0) &=  
\schurS_{332}+\schurS_{422}+(1+q^2) \schurS_{2222}+(2+2 q) \schurS_{3221}+\schurS_{3311}+\schurS_{4211} \\
&+(3 q+q^2) \schurS_{22211}+4 q \schurS_{32111}+q \schurS_{41111}+4 q^2 \schurS_{221111}+3 q^2 \schurS_{311111} \\
&+3 q^3 \schurS_{2111111}+q^4 \schurS_{11111111} 
\end{align*}
and that this is also equal to $q^{-1} \LLT_{\nuvec}(\xvec;q)$.

As a final check, we verify one of the coefficients with the combinatorial formula.
Using the notation in \cref{thm:skewEInSchurExpansion}, $\alpha = 1331$.
The term $(2+2q) \schurS_{3221}$ then arises from the four semistandard tableaux
\[
\substack{\ytableaushort{1222,333,4}, \\ 1} \quad
\substack{\ytableaushort{1223,233,4}, \\ 0} \quad
\substack{\ytableaushort{1223,234,3}, \\0 } \quad
\substack{\ytableaushort{1224,233,3}. \\1 }
\]
where the value of $\charge_{31}(w)=\charge(w \cdot 2111)$ is shown under each tableau.
\end{example}

\subsection*{Acknowledgements}

The authors would like to thank Svante Linusson and Samu Potka for helpful discussions.
We also thank Jim Haglund for suggesting to look at the connection with LLT polynomials 
and the relevance of \cite{Brandlow2007}.
The first author is funded by the Swedish Research Council (Vetenskapsrådet), grant 2015-05308.

\section{Appendix: How to compute Kostka--Foulkes polynomials}\label{sec:kostkaFoulkes}

We shall briefly describe how to compute the 
coefficients $K_{\lambda\mu}(q)$ appearing in \cref{eq:kostkaFoulkes}.
This combinatorial model was first described by A. Lascoux and M. Schützenberger
in \cite{LascouxSchutzenberger78}.
For a permutation $\sigma \in \symS_k$, let $\Des(\sigma) \coloneqq \{i \in [k-1] : \sigma_{i+1}<\sigma_{i} \}$,
and let $\rev(\sigma)$ be the reverse, $\sigma_n,\sigma_{k-1},\dotsc,\sigma_1$.
We can now introduce the notion of \defin{charge} of a permutation.
\begin{equation}
 \charge(\sigma) \coloneqq \maj(\rev(\sigma^{-1})) = 
 \sum_{i \notin \Des(\sigma^{-1})} (k-i).
\end{equation}
For example, 
\[
\charge(198423765) = \maj(\rev(156498732) ) = \maj(237894651) = 20.
\]

Given a word $w$ with content $\mu \vdash n$,
we partition its entries into \defin{standard subwords} as follows.
Start from the right of $w$ and mark the first occurrence of $1$.
Proceed to the left, and mark the first occurrence of $2$,
then $3$ and so on, wrapping around the end if nessecary,
until $\mu'_1$ entries have been marked.
This subword is the first standard subword of $w$.
Remove this subword, and repeat the process to find the second standard subword,
of length $\mu'_2$.

For example, the first standard subword in $w = 2 1 1 2 3 5 4 3 4 1 1 2 2 3$
has been circled.
\[
2, 1, 1, \circled{2}, 3, \circled{5}, 4, 3, \circled{4}, 1, \circled{1}, 2, 2, \circled{3}.
\]
In total, we have four standard subwords in $w$, with corresponding charge values
\[
 \charge(25413) = 3,\quad 
 \charge(2431)=2 \quad 
 \charge(132)=2  \quad 
 \charge(12) = 1,
\]
and we define $\charge(w)$ as the sum of the charge values of the standard subwords.
In the example above,  $\charge(w) = 8$.

Recall the definition of the reading word $\rw(T)$ of a semistandard Young tableau 
from \cref{def:SSYT}. We then define $\charge(T) \coloneqq \charge(\rw(T))$
and the \defin{Kostka--Foulkes polynomial} $K_{\lambda\mu}(q)$ may be computed as
\begin{equation}\label{eq:kostkaFoulkesDef}
 K_{\lambda\mu}(q) = \sum_{T \in \SSYT(\lambda,\mu)} q^{\charge(T)}.
\end{equation}

\begin{example}[Computing a Kostka--Foulkes polynomial]
Consider the case $\lambda=421$, $\mu=3211$. There are four tableaux in $\SSYT(\lambda,\mu)$.
Below, these are displayed, each with the list of standard subwords
and corresponding charge values.
\begin{equation}
 \substack{
 \ytableaushort{1114,22,3} \\ 3214,\; 21,\; 1 \\ 1+0+0
 }
 \quad
\substack{
 \ytableaushort{1113,22,4} \\ 4213,\; 21,\; 1 \\ 2+0+0
 }
 \quad
 \substack{
 \ytableaushort{1112,24,3} \\ 3241,\; 12,\; 1 \\ 1+1+0
 }
 \quad
 \substack{
 \ytableaushort{1112,23,4} \\ 4231,\; 12,\; 1 \\ 2+1+0
 }
\end{equation}
Hence, $K_{\lambda\mu}(q) = q+2q^2+q^3$.
\end{example}

We remark that there is also the notion of \defin{skew Kostka--Foulkes polynomials},
$K_{\lambda/\mu,\nu}(q)$ see \cite{Butler1994}. 
They are defined as in \eqref{eq:kostkaFoulkesDef}, 
where the sum now ranges over the elements in $\SSYT(\lambda/\mu,\nu)$.
To our knowledge, there is no obvious relationship between these skew Kostka--Foulkes polynomials
and the polynomials
\[
 \sum_{T \in \SSYT(\nu',\alpha)} q^{\charge_{\mu'}(T)} 
\]
appearing in \eqref{eq:skewEInSchurExpansion}.

\bibliographystyle{alphaurl}
\bibliography{bibliography}

\end{document}